\newtheorem{theorem}{Theorem}[section]
\newtheorem{lemma}[theorem]{Lemma}
\newtheorem{prop}[theorem]{Proposition}
\newtheorem{cor}[theorem]{Corollary}
\newtheorem{defn}[theorem]{Definition}
\newtheorem{hyp}[theorem]{Hypothesis}
\theoremstyle{definition}
\theoremstyle{remark}
\newtheorem{remark}[theorem]{Remark}
\numberwithin{equation}{section}
\def\Xint#1{\mathchoice
   {\XXint\displaystyle\textstyle{#1}}%
   {\XXint\textstyle\scriptstyle{#1}}%
   {\XXint\scriptstyle\scriptscriptstyle{#1}}%
   {\XXint\scriptscriptstyle\scriptscriptstyle{#1}}%
   \!\int}
\def\XXint#1#2#3{{\setbox0=\hbox{$#1{#2#3}{\int}$}
     \vcenter{\hbox{$#2#3$}}\kern-.5\wd0}}
\def\avgint{\Xint-}
\DeclareMathOperator{\Div}{div}
\DeclareMathOperator{\supp}{supp}
\DeclareMathOperator{\grad}{\nabla}
\newcommand{\op}{{\mathrm{op}}}
\DeclareMathOperator{\diag}{diag}
\DeclareMathOperator{\lip}{\mathrm{Lip}}
\newcommand{\N}{\mathbb N}
\newcommand{\R}{\mathbb{R}}
\newcommand{\rn}{{\mathbb{R}^n}}
\newcommand{\loc}{\mathrm{loc}}
\newcommand{\Ss}{\mathcal S}
\newcommand{\bH}{\mathbf H}
\newcommand{\bG}{\mathbf G}
\newcommand{\bR}{\mathbf R}
\newcommand{\bS}{\mathbf S}
\newcommand{\bT}{\mathbf T}
\newcommand{\cP}{\mathcal P}
\newcommand{\vecb}{\mathbf b}
\newcommand{\vecc}{\mathbf c}
\newcommand{\vece}{\mathbf e}
\newcommand{\vecf}{\mathbf f}
\newcommand{\vecg}{\mathbf g}
\newcommand{\vecr}{\mathbf r}
\newcommand{\vecs}{\mathbf s}
\newcommand{\vecv}{\mathbf v}
\newcommand{\vech}{\mathbf h}
\newcommand{\vecu}{\mathbf u}
\newcommand{\vecw}{\mathbf w}
\title[Existence and uniqueness of solutions]
{Existence and uniqueness of solutions of degenerate elliptic equations with lower order terms }
\author[\c{C}etin, {\em et al.}] {  \c{S}eyma \c{C}etin, David Cruz-Uribe OFS,  Feyza Elif Dal,\\ Scott Rodney,  and Yusuf Zeren}
\address{\c{S}eyma \c{C}etİn \\
Dept. of Software Engineering, Istanbul Gelisim University, Avcilar, TUrkey
}
\email{seycetin@gelisim.edu.tr}
\address{David Cruz-Uribe, OFS \\
Dept. of Mathematics \\
University of Alabama \\
 Tuscaloosa, AL 35487, USA}
\email{dcruzuribe@ua.edu}
\address{Feyza Elif Dal \\
Dept. of Mathematics \\
Y\i ld\i z Technical University \\
Esenler, Istanbul, 34220 Davutpasha, Turkey
}
\email{feyzadal@hotmail.com}
\address{Scott Rodney\\
Dept. of Mathematics, Physics and Geology \\ 
Cape Breton University \\
Sydney, NS B1Y3V3, CA} 
\email{scott\_rodney@cbu.ca}
\address{Yusuf Zeren \\
Dept. of Mathematics \\
Y\i ld\i z Technical University \\
Esenler, Istanbul, 34220 Davutpasha, Turkey
}
\email{yzeren@yildiz.edu.tr}
\thanks{The second author is partially supported by a Simons Foundation
  Travel Support for Mathematicians Grant and by NSF Grant DMS-2349550. The third author is supported by the TUBITAK 2211-E Domestic Direct Doctorate Scholarship Program. The fourth author  is partially supported by an NSERC development grant.  This project is supported by  TUBITAK, the Scientific and Technological Research Council of T\"urkiye through a 2501 Joint Research Program grant 223N112; the fifth author is the PI on this grant.}
\keywords{degenerate elliptic equations, degenerate Sobolev inequalities, existence and uniqueness}
\subjclass{35A01, 35A02, 35H20, 35J15, 35R05, 46E35}
\def\l@subsection{\@tocline{2}{0pt}{4pc}{5pc}{}}
\begin{document}
\begin{abstract}
 We prove the existence and uniqueness of solutions to a Dirichlet problem 
 \[ \begin{cases}
Lu = f + v^{-1}\Div(v\vece h), & x \in \Omega, \\
 u = 0, & x \in \partial \Omega,
 \end{cases}\]
 where $L$ is a degenerate, linear, second order elliptic operator with lower order terms.  We assume very weak hypotheses, in terms of the coefficients of the equation, and we also assume the existence of degenerate Sobolev and Poincar\'e inequalities.  One notable feature of our result is that we show that we can assume significantly weaker versions of the Sobolev inequality if we in turn assume stronger integrability conditions on the coefficients.  Our theorems generalize a number of results in the literature on degenerate elliptic equations.  
\end{abstract}

\maketitle

%\tableofcontents

\section{Introduction}
\label{sec:introduction}

The goal of this paper is to prove the existence and uniqueness of degenerate weak solutions to  Dirichlet problems such as
\begin{equation} \label{eqn:intro-dirichlet}
\begin{cases}
Lu = f + v^{-1}\Div(v \vece h), & x \in \Omega, \\
 u = 0, & x \in \partial \Omega,
 \end{cases}
\end{equation}
where $\Omega\subset \R^n$ is a bounded domain and $L$ is a degenerate, second order elliptic operator with lower order terms, e.g.,
\begin{equation} \label{eqn:intro-operator}
 Lu = v^{-1} \Div(Q\grad u) + \vecb \cdot \grad u + v^{-1}\Div(v\vecc u) + du.  
 \end{equation}
Here, $Q$ is an $n\times n$ self-adjoint, positive semidefinite, measurable matrix function, $v$ is a nonnegative measurable function (``weight"), and the  coefficients $\vecb$, $\vecc$, $d$, and the data $\vece$, $h$,  are measurable functions.  
(In fact, we will consider a more general equation in our main results.  See Section~\ref{section:prelim} below.)  Our goal is to determine the weakest assumptions on the weight $v$, the matrix $Q$, and the coefficients to show that a solution exists and is unique.  

To put our results into context, we describe some previous results.  Our goal is not to be comprehensive but to indicate some prior results that motivated our work.  
In the classical case, when $v=1$ and $Q$ is uniformly elliptic, the existence of solutions to this Dirichlet problem was studied by multiple authors and there is a complete theory:  we refer the readers to Gilbarg and Trudinger~\cite{MR1814364} and Ladyzhenskaya and Ural$'$tseva~\cite{MR244627} for details and extensive references. Here we note that the classical Sobolev and Poincar\'e inequalities play a central role in the proofs of these results.  

In the degenerate case, Fabes, Kenig and Serapioni~\cite{MR643158} studied the case when $Q$ satisfies the ellipticity condition
\[ \lambda v(x) |\xi|^2 \leq \langle Q\xi, \xi \rangle \leq \Lambda v(x)|\xi|^2\]
for $\xi \in \R^n$, where the weight $v$ satisfies the Muckenhoupt $A_2$ condition.  They extended much of the classical theory proving, existence, uniqueness and regularity for the principal part of the operator, that is, $Lu= v^{-1} \Div(Q\grad u)$.  (See also Stredulinksy~\cite{MR757718}.)   Key to their work was proving the existence of a global degenerate Sobolev inequality 
\[ \bigg( \int_\Omega |\varphi|^{2\sigma}\,vdx\bigg)^{\frac{1}{2\sigma}}
\leq C\bigg( \int_\Omega |\grad \varphi|^{2}\,vdx\bigg)^{\frac{1}{2}}, \]
where $1< \sigma < \frac{n}{n-1}+\delta$.  (Note that the ``gain" $\sigma$ is less than in the classical Sobolev inequality, where $\sigma=\frac{n}{n-2}$.)   Chanillo and Wheeden~\cite{MR0847996} proved two weight global and local Sobolev and Poincar\'e inequalities of the form
\[ \bigg( \int_\Omega |\varphi|^{2\sigma}\,vdx\bigg)^{\frac{1}{2\sigma}}
\leq C\bigg( \int_\Omega |\grad \varphi|^{2}\,wdx\bigg)^{\frac{1}{2}}, \]
where the weight $v$ is doubling, $w$ satisfies the $A_2$ condition, and the pair satisfies a certain balance condition.  They used these to study the regularity of solutions of the Dirichlet problem (though not existence), again for the principal part only, when $Q$ satisfies
\begin{equation} \label{eqn:intro-elliptic}  
w(x) |\xi|^2 \leq \langle Q\xi, \xi \rangle \leq  v(x)|\xi|^2.
\end{equation}
(See also Franchi, Lu, and Wheeden~\cite{MR1354890}.)

In these results the authors worked in a specific ``geometric" setting (determined by the matrix $Q$ and the weights $v$ and $w$).  A different,``abstract" approach was introduced by Guti\'errez and Lanconelli~\cite{MR2015404}, and expanded upon by Sawyer and Wheeden~\cite{MR2574880,MR2204824}.  These authors considered a version of our Dirichlet problem, assuming $v=1$.  Their approach was to establish a collection of assumptions on $Q$ and the coefficients that were sufficient to prove their results.  Chief among them was assuming the existence of a global Sobolev inequality of the form
\begin{equation} \label{eqn:intro-sobolev}
 \bigg( \int_\Omega |\varphi|^{2\sigma}\,vdx\bigg)^{\frac{1}{2\sigma}}
\leq C\bigg( \int_\Omega |\sqrt{Q} \grad \varphi|^{2}\,dx\bigg)^{\frac{1}{2}}, 
\end{equation}
where $\sigma>1$.
This approach was adopted by the fourth author and his collaborators in a series of papers~\cite{MR2994671,MR2551502,MR2711279,MR3388872,MR3369270,MR3095112,MR2906551,MR4069009}.

The existence the degenerate Sobolev inequality~\eqref{eqn:intro-sobolev} is closely related to the property that metric balls in the Carnot-Caratheodory metric are doubling.  (See Korboenko, Maldonado and Rios~\cite{MR3359590}.)  In the infinitely degenerate setting, e.g., when the matrix $Q$ has the form
\[ Q(x) = \begin{pmatrix} 1 & 0 \\
0 & \exp(-|x|^{-2}) \end{pmatrix}, \]
this metric is no longer doubling.  This led to Korobenko, Rios, Sawyer, and Shen~\cite{MR4224718,KRSS2024} to consider the abstract approach but with a Sobolev inequality with the gain in the scale of Orlicz spaces.  That is, they assumed Sobolev inequalities of the form
\[ \|\varphi\|_{L^A(\Omega)} \leq C \int_\Omega |\sqrt{Q} \grad \varphi|\,dx, \]
where $L^A$ is the Orlicz space induced by a Young function of the form $A(t)=t\log(e+t)^\sigma$, $\sigma>1$. They used this inequality to develop De Giorgi and Moser iteration and prove regularity of weak solutions.   
(To be precise, in~\cite{KRSS2024} they assumed a stronger, ``modular" inequality required to prove Moser iteration.  This distinction does not concern us and we refer the reader to this paper for more information.) They also proved ``geometric" results by giving specific examples where their Sobolev inequality holds.
More recently, Hafeez, Lavier,  Williams,  and Korobenko~\cite{MR4322905} proved the existence and uniqueness of solutions to the Dirichlet problem for the principal part, assuming an Orlicz Sobolev inequality of the form
\begin{equation} \label{eqn:intro-Orlicz Sobolev}
 \|\varphi\|_{L^A(\Omega)} \leq C \bigg(\int_\Omega |\sqrt{Q} \grad \varphi|^2\,dx\bigg)^{\frac{1}{2}}, 
 \end{equation}
where $A(t)=t^2\log(e+t)^\sigma$, $\sigma>0$.  (Korobenko~\cite{MR4277804} and later Heikkinen and Karak~\cite{MR4331593} considered the connection between doubling and Orlicz Sobolev inequalities.)

The fully degenerate equation, with $v\neq 1$ and $Q$ and $v$ satisfying the second inequality in~\eqref{eqn:intro-elliptic}, has been considered by the second and fourth authors and their collaborators, though only for the principal part~\cite{MR4280269,MR3011287,MR3846744, MR2771262,CUMR2025,CUMR2025,CUMS2025}. In most of these papers they assumed Sobolev inequalities with gain in the scale of Lebesgue spaces, but in~\cite{CUMR2025} they assumed an Orlicz Sobolev inequality of the form~\eqref{eqn:intro-Orlicz Sobolev}. They also proved (weak) regularity results assuming only a Sobolev inequality with no gain (that is, \eqref{eqn:intro-sobolev} with $\sigma=1$).  Earlier, in~\cite{MR3846744}, they proved the existence of solutions to a Neumann-type problem for a degenerate $p$-Laplacian, assuming only a Poincar\'e inequality with no gain.  

When $v\neq 1$, we want to highlight one feature of our equation: the weight $v^{-1}$ on the principal term, and the conjugation $v^{-1}\Div( v\cdot )$ that appears in one of the first order terms and in the data.  Normalizing by dividing by the weight $v$ is implicit in Fabes, Kenig and Serapioni~\cite{MR643158}, and also in Chiarenza and Serapioni~\cite{MR799906}, and was made explicit in~\cite{MR3335399,MR2901220,MR2419963}.  The intuition for this is that the weak derivatives in the divergence are formally defined by pairings in the Hilbert space $L^2(v,\Omega)$, but we also want them to agree with the results gotten by (formally) integrating by parts.  In other words, if $\varphi$ is in $\lip_0(\Omega)$, we want 
\[ \int_\Omega v^{-1}\Div(v \vecu) \varphi\, vdx = \int_\Omega \Div(v \vecu) \varphi \,dx = -\int_\Omega \vecu \cdot \grad \varphi\, vdx.  \]
This definition of weak derivatives is described in more detail in Section~\ref{section:prelim} below.
\medskip

To illustrate our results, we state two special cases.  The first is for the principal part of the equation only and is restated below as Theorem~\ref{thm:no-gain-no-first-order}.

\begin{theorem} \label{thm:intro-principal}
Given an open, bounded, connected set $\Omega \subset \R^n$, a nonnegative, measurable function $v$ on $\Omega$,  and a real, self-adjoint, positive semidefinite, measurable matrix function $Q$ that is finite almost everywhere on $\Omega$, suppose the Sobolev inequality without gain
(that is, \eqref{eqn:intro-sobolev} holds with $\sigma=1$)
holds
for all $\varphi \in \lip_0(\Omega)$.  If $f,\, h\in L^2(v,\Omega)$ and $\vece$ is a degenerate subunit vector field, that is, for all $\xi \in \R^n$, $\vece$ satisfies
        \[ \langle \vece, \xi \rangle^2 \leq v^{-1} \langle Q\xi, \xi \rangle, \]
then the Dirichlet problem
\begin{equation*}
\begin{cases}
-v^{-1}\Div(Q\nabla u) = f + v^{-1}\Div(v\vece h), & x \in \Omega, \\
 u = 0, & x \in \partial \Omega,
 \end{cases}
\end{equation*}
has a unique degenerate weak solution such that
\[  \int_\Omega |u|^{2}\,vdx + \int_\Omega |\sqrt{Q} \grad u|^{2}\,dx < \infty.  \]
\end{theorem}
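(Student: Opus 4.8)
The plan is to recast the Dirichlet problem as an operator equation in the degenerate Sobolev space $\mathcal{W}$ obtained by completing $\lip_0(\Omega)$ with respect to the norm $\big(\int_\Omega |\varphi|^2 v\,dx + \int_\Omega |\sqrt Q\nabla\varphi|^2\,dx\big)^{1/2}$, as set up in Section~\ref{section:prelim}, and then solve it via the Lax--Milgram theorem (equivalently, since the relevant bilinear form is symmetric, the Riesz representation theorem). First I would record that the degenerate weak formulation of the problem is
\[ \int_\Omega \nabla\varphi\cdot Q\nabla u\,dx = \int_\Omega f\varphi\,v\,dx - \int_\Omega (\vece\cdot\nabla\varphi)\,h\,v\,dx \qquad\text{for all }\varphi\in\mathcal{W}, \]
which follows formally by multiplying the equation by $\varphi$, integrating against $v\,dx$, and integrating by parts twice; the conjugation $v^{-1}\Div(v\,\cdot\,)$ in the data is precisely what makes these integrations by parts consistent, as explained in the introduction. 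Because the Sobolev inequality without gain holds, the seminorm $\varphi\mapsto\|\sqrt Q\nabla\varphi\|_{L^2(\Omega)}$ dominates $\|\varphi\|_{L^2(v,\Omega)}$ and is therefore an equivalent norm on $\mathcal{W}$; consequently the symmetric bilinear form
\[ \mathcal{B}(u,w):=\int_\Omega \nabla w\cdot Q\nabla u\,dx=\int_\Omega \sqrt Q\nabla u\cdot\sqrt Q\nabla w\,dx \]
is just an inner product inducing this equivalent norm, hence automatically bounded and coercive on $\mathcal{W}$.

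Next I would verify that the right-hand side defines a bounded linear functional $F$ on $\mathcal{W}$. For the zeroth-order term, Cauchy--Schwarz together with the Sobolev inequality gives $\big|\int_\Omega f\varphi\,v\,dx\big|\le\|f\|_{L^2(v,\Omega)}\|\varphi\|_{L^2(v,\Omega)}\le C\|f\|_{L^2(v,\Omega)}\|\sqrt Q\nabla\varphi\|_{L^2(\Omega)}$. For the term arising from $v^{-1}\Div(v\vece h)$, the subunit condition $\langle\vece,\xi\rangle^2\le v^{-1}\langle Q\xi,\xi\rangle$ applied with $\xi=\nabla\varphi$ yields the pointwise bound $|\vece\cdot\nabla\varphi|\le v^{-1/2}|\sqrt Q\nabla\varphi|$, so
\[ \Big|\int_\Omega(\vece\cdot\nabla\varphi)\,h\,v\,dx\Big|\le\int_\Omega|\sqrt Q\nabla\varphi|\,|h|\,v^{1/2}\,dx\le\|\sqrt Q\nabla\varphi\|_{L^2(\Omega)}\|h\|_{L^2(v,\Omega)}. \]
Hence $|F(\varphi)|\le\big(C\|f\|_{L^2(v,\Omega)}+\|h\|_{L^2(v,\Omega)}\big)\|\sqrt Q\nabla\varphi\|_{L^2(\Omega)}$ for $\varphi\in\lip_0(\Omega)$, and since this bound is in the $\mathcal{W}$-norm, the two estimates above also show that each piece of $F$ passes to the limit along Cauchy sequences, so $F$ extends uniquely to a bounded functional on $\mathcal{W}$. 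Lax--Milgram applied to $\mathcal{B}$ and $F$ then yields a unique $u\in\mathcal{W}$ with $\mathcal{B}(u,\varphi)=F(\varphi)$ for all $\varphi\in\mathcal{W}$; this is the desired degenerate weak solution, and testing with $\varphi=u$ gives the energy estimate $\|\sqrt Q\nabla u\|_{L^2(\Omega)}\le C\|f\|_{L^2(v,\Omega)}+\|h\|_{L^2(v,\Omega)}$, from which $\int_\Omega|u|^2v\,dx+\int_\Omega|\sqrt Q\nabla u|^2\,dx<\infty$ follows via the Sobolev inequality.

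I expect the only genuine subtlety — and the point where the preliminaries must be invoked carefully — to be that $\mathcal{W}$ need not consist of honest functions: an element of the completion is represented as a pair consisting of a function in $L^2(v,\Omega)$ and a ``gradient'' living in the appropriate $L^2$ space, and one must be sure that ``degenerate weak solution'' is understood in that sense and that $F$ is well defined on such pairs (which the two displayed bounds guarantee, since one controls the function part through $L^2(v)$ and the other controls the gradient part through $\|\sqrt Q\nabla\varphi\|_{L^2}$). Beyond that bookkeeping, the argument is the standard Hilbert-space one; the load-bearing hypotheses are exactly the no-gain Sobolev inequality (which gives coercivity of $\mathcal{B}$, bounds the $f$-term, and bounds $\|u\|_{L^2(v)}$) and the subunit condition on $\vece$ (which is precisely the borderline estimate needed to absorb the data term $v^{-1}\Div(v\vece h)$ into the energy norm with no room to spare).
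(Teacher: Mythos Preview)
Your proposal is correct and follows essentially the same approach as the paper: it proves Theorem~\ref{thm:intro-principal} (restated as Theorem~\ref{thm:no-gain-no-first-order}) by observing that, with no lower-order terms, the bilinear form $\mathcal{B}(u,w)=\int_\Omega Q\nabla u\cdot\nabla w\,dx$ on $QH_0^{1,2}(v,\Omega)$ is bounded and coercive thanks to the no-gain Sobolev inequality, and that the data define a bounded linear functional (via Cauchy--Schwarz for $f$ and the subunit estimate for $\vece h$), so Lax--Milgram applies directly. Your remark about elements of the completion being pairs $(u,\nabla u)$ rather than genuine functions is exactly the bookkeeping the paper handles in Section~\ref{section:prelim}.
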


\begin{remark}
    The definition of a degenerate subunit vector field is a generalization of the subunit vector fields defined when $v=1$ by Fefferman and Phong~\cite{MR0730094} and later used by Sawyer and Wheeden~\cite{MR2204824,MR2574880}.  Below, we will show that we can replace this pointwise inequality with a weaker norm inequality:  see~\eqref{eqn:norm-subunit}.
\end{remark}

\begin{remark}
    Our data term $v^{-1}\Div(v\vece h)$ appears different than the divergence term $\Div(\vecg)$, $\vecg \in L^2(\Omega,\R^n)$, that appears in the classical equation.   However, if $v=1$ and $Q$ is uniformly elliptic, and if we assume after renormalizing that $|Q|_\op \leq 1$, then the subuniticity condition reduces to 
$\langle \vece, \xi \rangle^2 \leq |\xi|^2$,  $\xi \in \R^n$. 
    It follows from this that $|\vece|\leq 1$ a.e.  Therefore, given $\vecg \in L^2(\Omega)$ we can rewrite it as
    \[ \vecg = \frac{\vecg}{|\vecg|} |\vecg| = \vece h.\]
\end{remark}

\begin{remark}
    Theorem~\ref{thm:intro-principal} was proved by Hafeez {\em et al.}~\cite[Theorem~1.1]{MR4322905} with $\vece,\, h=0$, and assuming that $v=1$.  In the statement of their result they assume that $f, \,|Q|_\op \in L^\infty(\Omega)$ and they assume an Orlicz Sobolev inequality, but it is clear from the proof that they only need $f\in L^2(\Omega)$ and a Sobolev inequality with no gain.  Guti\'errez and Lanconelli~\cite{MR2015404} considered the  Dirichlet problem with nonzero boundary values
    \begin{equation*}
\begin{cases}
\Div(Q\nabla u) = 0 & x \in \Omega, \\
 u = \varphi, & x \in \partial \Omega.
 \end{cases}
\end{equation*}
They also assume that $|Q|_\op \in L^\infty(\Omega)$ and a Sobolev inequality with gain, but again from their proof they only need a Sobolev with no gain.  Their result is a special case of Corollary~\ref{cor:boundary-data} below.
\end{remark}

Our second result is a special case of Theorem~\ref{thm:main-theorem} below.  The hypotheses are considerably more restrictive than those for Theorem~\ref{thm:intro-principal}; the bulk of them, and in particular the Poincar\'e inequality, are required to estimate the first order terms.  For brevity, we omit some minor technical hypotheses.

\begin{theorem} \label{thm:intro-main}
    Given an open, bounded, connected set $\Omega \subset \R^n$, a nonnegative function $v\in L^1(\Omega)$,  and a real, self-adjoint, positive semidefinite, measurable matrix function $Q$ that satisfies $|Q(x)|_\op \leq k v(x)$ $v$-a.e., suppose further that:
    \begin{enumerate}
        \item The Sobolev inequality~\eqref{eqn:intro-sobolev} holds for some $\sigma>1$ and all $\varphi\in \lip_0(\Omega)$;
        \item A weak Poincar\'e inequality holds:  for every $\epsilon>0$ there exists $\delta>0$ such that if $0<r<\delta$,
        \[ \int_{B(x_0,r)}  |\varphi-\varphi_{B,v}|^2 \,vdx 
        \leq \epsilon\int_{B(cx_0,r)} |\sqrt{Q}\varphi|^2 \,dx,  \]
        where $c\geq 1$, $\varphi \in \lip_0(\Omega)$,  $B(x_0,cr)\subset \Omega$ and $\varphi_{B,v} =v(B)^{-1}\int_B \varphi\,vdx$;
        
        \item $f,\, h \in L^2(v,\Omega)$;

        \item For some $q>2\sigma'$, $d \in L^{\frac{q}{2}}(v,\Omega)$;
        
        \item $\vecb,\, \vecc,\, \vece$ are degenerate subunit vector fields.
    \end{enumerate}
    Then the Dirichlet problem~\eqref{eqn:intro-dirichlet} with $L$ defined by~\eqref{eqn:intro-operator} has a unique solution.  
\end{theorem}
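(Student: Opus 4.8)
The plan is to recast \eqref{eqn:intro-dirichlet} as a variational problem in a Hilbert space and apply the Lax--Milgram theorem, supplemented by the Fredholm alternative if full coercivity is not immediately available. Let $\mathcal H$ denote the completion of $\lip_0(\Omega)$ with respect to the energy norm $\|u\|_{\mathcal H}^2=\int_\Omega|u|^2\,v\,dx+\int_\Omega|\sqrt Q\,\nabla u|^2\,dx$; under the standing hypotheses (and in particular the Poincar\'e inequality) this completion is realized as a genuine space of functions with a well-defined degenerate gradient, so that a degenerate weak solution is an element $u\in\mathcal H$ with $\mathcal B(u,\varphi)=\mathcal F(\varphi)$ for every $\varphi\in\lip_0(\Omega)$, where, after the formal integration by parts described in the introduction,
\[
\mathcal B(u,\varphi)=\int_\Omega\langle Q\nabla u,\nabla\varphi\rangle\,dx-\int_\Omega(\vecb\cdot\nabla u)\varphi\,v\,dx+\int_\Omega(\vecc\cdot\nabla\varphi)\,u\,v\,dx-\int_\Omega d\,u\varphi\,v\,dx,
\]
and $\mathcal F(\varphi)=\int_\Omega f\varphi\,v\,dx-\int_\Omega(\vece\cdot\nabla\varphi)\,h\,v\,dx$. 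Since $\vece$ is a degenerate subunit vector field, $|\vece\cdot\nabla\varphi|\le v^{-1/2}|\sqrt Q\,\nabla\varphi|$, and combined with $f,h\in L^2(v,\Omega)$ this shows at once that $\mathcal F$ is a bounded linear functional on $\mathcal H$.

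The first substantive step is to verify that $\mathcal B$ is bounded on $\mathcal H\times\mathcal H$. The principal term is controlled by Cauchy--Schwarz; the two first order terms by subuniticity and Cauchy--Schwarz, e.g.\ $\int_\Omega|\vecb\cdot\nabla u||\varphi|\,v\,dx\le\|\sqrt Q\,\nabla u\|_{L^2}\|\varphi\|_{L^2(v)}$, and here one notes that only the norm form \eqref{eqn:norm-subunit} of subuniticity is used, which gives the advertised generalization. The zeroth order term dictates the hypothesis $q>2\sigma'$: by H\"older's inequality applied to $|d|\,|u|\,|\varphi|$ with exponents $q/2,\ \tfrac{2q}{q-2},\ \tfrac{2q}{q-2}$, the condition $q>2\sigma'$ is exactly what makes $\tfrac{2q}{q-2}<2\sigma$, so that the Sobolev inequality~\eqref{eqn:intro-sobolev} with gain $\sigma$, together with $v(\Omega)<\infty$ (as $v\in L^1$), yields $\|u\|_{L^{2q/(q-2)}(v)}\le C\|\sqrt Q\,\nabla u\|_{L^2}$ and hence $\int_\Omega|d||u||\varphi|\,v\,dx\le C\|d\|_{L^{q/2}(v)}\|u\|_{\mathcal H}\|\varphi\|_{\mathcal H}$. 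With boundedness in hand, the entire problem reduces to a coercivity (or G\r arding) estimate.

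Testing with $\varphi=u$ gives $\mathcal B(u,u)=\|\sqrt Q\,\nabla u\|_{L^2}^2$ plus lower order contributions. The zeroth order contribution is absorbed by writing $\tfrac{2q}{q-2}$ as an interpolation exponent strictly between $2$ and $2\sigma$ and using Young's inequality, so that $\bigl|\int_\Omega d\,u^2\,v\,dx\bigr|\le\tau\|\sqrt Q\,\nabla u\|_{L^2}^2+C_\tau\|u\|_{L^2(v)}^2$ for every $\tau>0$ --- the strict inequality $q>2\sigma'$ is precisely what leaves room to interpolate. The first order contribution $\int_\Omega((\vecc-\vecb)\cdot\nabla u)\,u\,v\,dx$ is the heart of the argument, and the weak Poincar\'e inequality~(2) is the tool. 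I would localize: choose a partition of unity subordinate to a cover of $\overline\Omega$ by balls of radius $r<\delta$ with bounded overlap, split $u$ on each ball into its $v$-average and its oscillation, use hypothesis~(2) to dominate the oscillation by $\epsilon\int_{B(x_0,cr)}|\sqrt Q\,\nabla u|^2\,dx$, and control the contributions of the averages by chaining through overlapping balls to a ball disjoint from $\supp u$, on which $u$ vanishes. Summing over the cover and balancing $\epsilon$ against its geometry produces a G\r arding inequality
\[
\mathcal B(u,u)\ \ge\ \tfrac12\|\sqrt Q\,\nabla u\|_{L^2}^2-\lambda_0\|u\|_{L^2(v)}^2,
\]
and, pressing the same argument (using that $\|u\|_{\mathcal H}^2\lesssim\|\sqrt Q\,\nabla u\|_{L^2}^2$ by the Sobolev inequality), coercivity $\mathcal B(u,u)\ge c_0\|u\|_{\mathcal H}^2$. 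This localization and chaining step, and the bookkeeping needed to keep the constants controlled, is where I expect the main difficulty to lie.

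If coercivity is obtained, Lax--Milgram immediately produces a unique $u\in\mathcal H$ with $\mathcal B(u,\varphi)=\mathcal F(\varphi)$, which is the desired solution. If only a G\r arding inequality is available, I would instead first show that the inclusion $\mathcal H\hookrightarrow L^2(v,\Omega)$ is compact --- a Rellich--Kondrachov-type argument using precisely the gain $\sigma>1$ in \eqref{eqn:intro-sobolev} --- so that $L$ differs from a coercive operator by a compact one; the Fredholm alternative then reduces existence for all data $f,h\in L^2(v,\Omega)$ to uniqueness for the homogeneous problem, and the latter follows by testing a homogeneous solution against itself and invoking the weak Poincar\'e bound to force $\|\sqrt Q\,\nabla u\|_{L^2}=0$ and hence $u=0$. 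Finally, Theorem~\ref{thm:intro-principal} is the special case in which $\mathcal B(u,u)=\|\sqrt Q\,\nabla u\|_{L^2}^2$ and the Sobolev inequality with no gain already gives $\|u\|_{L^2(v)}\le C\|\sqrt Q\,\nabla u\|_{L^2}$, so that coercivity is automatic and only Lax--Milgram is needed; the content of the present theorem is exactly the incorporation of the lower order terms, for which the weak Poincar\'e inequality is the decisive ingredient.
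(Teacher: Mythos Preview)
Your overall architecture --- bounded bilinear form, G\r arding inequality, compact embedding, Fredholm alternative --- is exactly the paper's.  But you have misallocated the two key hypotheses, and your uniqueness step has a real gap.

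The G\r arding inequality needs no Poincar\'e.  For the first-order term, subuniticity alone gives
$\bigl|\int_\Omega((\vecc-\vecb)\cdot\nabla u)\,u\,v\,dx\bigr|\le 2\|\sqrt Q\,\nabla u\|_{L^2}\|u\|_{L^2(v)}$,
and Young's inequality absorbs this into $\tau\|\sqrt Q\,\nabla u\|_{L^2}^2+C_\tau\|u\|_{L^2(v)}^2$ --- the same trick you correctly used for the zeroth-order term.  Your localization-and-chaining plan for full coercivity is aimed at the wrong target and would not succeed anyway: the first-order terms have no sign and can be of the same order as the energy, so G\r arding is best possible.  The paper obtains almost coercivity (Lemma~\ref{power-gain-almost-coercivity}) from H\"older, subuniticity, $L^2$--$L^{2\sigma}$ interpolation, and Young only.

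The Poincar\'e inequality enters instead in the compactness of $\mathcal H\hookrightarrow L^2(v,\Omega)$ (Lemma~\ref{lemma:compact-gain}), which you attributed to the Sobolev gain alone.  Both are needed: the paper covers a compact $K\subset\Omega$ by small balls with bounded overlap, uses Poincar\'e on each ball to make oscillations of $u_n-u_m$ small, uses weak $L^p$-convergence to control the averages, and uses the gain $\sigma>1$ only for the tail on $\Omega\setminus K$ via $\|u\|_{L^p(v,\Omega\setminus K)}\le v(\Omega\setminus K)^{1/(p\sigma')}\|u\|_{L^{p\sigma}(v)}$.  Your localization instinct was right, just pointed at compactness rather than coercivity.

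The genuine gap is uniqueness.  Testing a homogeneous solution against itself and invoking G\r arding yields only $c_1\|u\|_{\mathcal H}^2\le C_3\|u\|_{L^2(v)}^2$, which does not force $u=0$; Poincar\'e does not rescue this.  The paper's argument (Theorem~\ref{thm:hom-problem}) is quite different: it tests against the truncation $w=(u-k)_+$, invokes a sign (compatibility) condition on the lower-order coefficients --- one of the ``minor technical hypotheses'' the statement of Theorem~\ref{thm:intro-main} omits --- to drop a term, and then uses the Sobolev gain to extract a uniform lower bound $v(\{u>k\})\ge c>0$ independent of $k$, contradicting $u\in L^2(v,\Omega)$ as $k\nearrow\sup u$.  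Without the compatibility condition and the truncation device, your uniqueness sketch does not close.
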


\begin{remark}
   When $v=1$, Theorem~\ref{thm:intro-main} was proved by the fourth author~\cite{MR2994671}.   In the weighted case (i.e., $v\neq 1$) it is new, even in the ``one weight" case considered by Fabes, Kenig, and Serapioni~\cite{MR643158}.  We emphasize that with the exception of the Sobolev and Poincar\'e inequalities, we only assume integrability conditions on $v$, $Q$, and the coefficients.  Further, we note that the Sobolev and Poincar\'e can be shown to hold with only integrability assumptions on $v$ and $Q$:  see~\cite{DCU-FED-SR} and the forthcoming paper~\cite{DCU-FED-SR-2}.  In particular, we do not have to assume that $Q$ is continuous or that $v$ is a Muckenhoupt $A_2$ weight.
\end{remark}

\begin{remark}
   Theorem~\ref{thm:intro-main} is an ``abstract" result but it can be immediately used to prove ``geometric" results once degenerate Sobolev and Poincar\'e inequalities are established.  Towards this end, we refer the reader to the specific geometries and (corresponding inequalities) proved by~Fabes, Kenig, and Serapioni~\cite{MR643158}, Chanillo and Wheeden~\cite{MR0847996}, and Korobenko, {\em et al.}~\cite{MR4224718}.  See also the forthcoming paper~\cite{DCU-FED-SR-2}.
\end{remark}

\medskip

The remainder of this paper is organized as follows.  In Section~\ref{section:prelim} we gather a number of definitions and results about function spaces and degenerate Sobolev and Poincar\'e inequalities.  The purpose of this is to allow us to give a precise definition of the elliptic equations we are working with, define their degenerate weak solutions, and to  state the hypotheses of our main results.  We state these results in the greatest possible generality; in particular, for all $p$, $1\leq p<\infty$, and not simply for $p=2$.  We do so since we are drawing on results from several different sources with slightly different hypotheses, and we thought it was useful to establish a consistent framework, both for this paper and for future work.  

Readers who are willing to take some things on faith might want to skip directly to Section~\ref{section:main-results}, where we state our existence and uniqueness theorems.  We actually give four different results, each of which depends on the Sobolev and Poincar\'e inequalities that we assume.  As an immediate corollary, we state an existence and uniqueness theorem for Dirichlet problems with nontrivial boundary data.  

The subsequent sections give the proofs of our main results.  In Section~\ref{section:prelim-vector} we prove a number of lemmas about the vector fields used to define the first order terms in our equation. We also establish that with our hypotheses we can use a larger collection of functions as test functions in our definition of a degenerate weak solution.   In Section~\ref{section:zero} we prove that with our hypotheses, the zero solution is the unique solution to the homogeneous problem.  In Sections~\ref{section:coerce-power-gain} and~\ref{section:coerce-weaker-gain} we show that the bilinear form induced by our elliptic equation is bounded and almost coercive.  In Sections~\ref{section:zero} to~\ref{section:coerce-weaker-gain} we must give multiple proofs of results depending on our assumed Sobolev inequalities, but we have tried to eliminate as much repetition as possible.  Finally, in Sections~\ref{section:main-proofs} and~\ref{section:cor-proofs} we use the functional analytic techniques--that is, the Lax-Milgram theorem and the Fredholm alternative--to prove the existence and uniqueness of degenerate weak solutions to our Dirichlet problem.  

\section{Preliminary definitions: function spaces}
\label{section:prelim}

  Throughout this paper, the constant $n$ will always denote the dimension of the underlying Euclidean space, $\R^n$.   We will denote constants in computations by $C$ and $c$; these values may change at each appearance.  If they depend on underlying parameters,  we will write, for example, $C(n)$.   Given two quantities $A$ and $B$, if there exists $c>0$ such that $A\leq cB$, we will write $A\lesssim B$.  If $A\lesssim B$ and $B\lesssim A$, we will write $A\approx B$.

The set $\Omega$ will always be assumed to be a domain:  that is, an open, connected, bounded set in $\rn$.
A weight will be a nonnegative measurable function $v$ defined on $\Omega$ such that $0<v(x)<\infty$ a.e.  
Given a set $E\subset \Omega$ and weight $v$, we define the "$v$-measure" of $E$ by
\[ v(E) = \int_E \,vdx.  \]
If  $0<|E|<\infty$, the average of a function $f$ on $E$
is
\[ f_E = \avgint_E f\,dx = \frac{1}{|E|} \int_E f\,dx.
\]
If $0<v(E)<\infty$, the weighted average of $f$ with respect to a weight $v$ is
\begin{equation} \label{eqn:wtd-avg}
f_{v,E} = \avgint_E f \,dv = \frac{1}{v(E)}\int_E f\,\,vdx. 
\end{equation}
We will use Young's inequality repeatedly in the following form:  given $1<p<\infty$ and $\eta>0$, if $a,\,b\geq 0$, then 
\begin{equation} \label{eqn:youngs-ineq}
 ab \leq \eta a^p+ \eta^{-\frac{p'}{p}}b^{p'}. 
 \end{equation}

\subsection*{Vectors and matrices}
Vectors in $\R^n$  are assumed to be column vectors, but we will on occasion abuse notation and write $\vecv=(v_1,\ldots,v_n)$.    The standard basis in $\R^n$ will be denoted by $\{\vece_j\}_{j=1}^n$.  
Given an $n\times n$ matrix $A$, we define the operator norm of $A$ by
\[ |A|_\op = \sup\{ |A\xi| : \xi \in \R^n, |\xi|=1 \} . \]
Let $\Ss_n$ denote the collection of real, self-adjoint, $n\times n$ matrices that are positive semidefinite.  

Hereafter, $Q : \Omega \rightarrow \Ss_n$ will be a matrix function whose entries are Lebesgue measurable scalar functions that are finite almost everywhere.   We will also assume  that $Q(x)$ is invertible for a.e.~$x\in \Omega$.  The matrix $Q$ is diagonalizable:
\[ Q(x) = U^t(x) D(x) U(x),\]
where $U$ is orthogonal and
\[ D(x) = \diag(\lambda_1(x),\ldots,\lambda_n(x)) \]
is a diagonal matrix.  Since $Q$ is measurable, $U$ and the eigenvalues $\lambda_i$ can be chosen to be measurable as well.  (See~\cite[Lemma~2.3.5]{MR1350650}.) By assumption $0<\lambda_i(x)<\infty$ for all $i$ and a.e.~$x\in \Omega$.  Define powers $Q^a$, $a>0$, by 
\[ Q^a(x) = U^t(x) D(x)^a U(x),\]
where $D^a = \diag(\lambda_1^a,\ldots,\lambda_n^a)$.  As a consequence of this diagonalization, we have that  operator norm commutes with taking powers: $|Q^a|_\op =|Q|_\op^a$. We also have the identity $(\sqrt{Q})^{-1}=\sqrt{Q^{-1}}$.

Given $1\leq p<\infty$ and a matrix function $Q$, there exist $\tilde{v}$ and $\tilde{w}$ weights such that for a.e.~$x\in \Omega$ and every $\xi\in \R^n$,  
\[ \tilde{w}(x)|\xi|^p \leq |\sqrt{Q(x)}\xi |^p  \leq \tilde{v}(x)|\xi|^p.  \]
For instance, we could take  $\tilde{v}$ and $\tilde{w}$ to be the appropriate powers of the largest and smallest eigenvalues of $Q$:
\[ \tilde{v}(x) = |Q(x)|_\op^{\frac{p}{2}}, \qquad \tilde{w}(x) = |Q^{-1}(x)|_\op^{-\frac{p}{2}}. \]
(See~\cite[Proposition~3.2]{MR3544941}; note this result is stated in a different but equivalent form.)  However, in practice it  suffices, for instance, for $\tilde{v}$ to satisfy  $|Q(x)|_\op^{\frac{p}{2}} \leq \tilde{v}(x)$.  In fact, it is often enough to assume $|Q(x)|_\op^{\frac{p}{2}} \leq k\tilde{v}(x)$ for some $k>0$.  

\subsection*{Function spaces}
Given a weight $v$ and $1\leq p<\infty$, let $L^p(v,\Omega)$  be the measurable functions defined on $\Omega$ such that
\[ \|f\|_{L^p(v,\Omega)} = \bigg(\int_\Omega |f|^p \,\,vdx \bigg)^{\frac{1}{p}} < \infty. \]
%j
With this norm, $L^p(v,\Omega)$ is a Banach space (see~\cite[Theorem~3.11]{MR924157}).
Note that if $v\in L^1(\Omega)$, then constant functions are contained in $L^p(v,\Omega)$.  For vector-valued functions $\vecf : \Omega \rightarrow \R^d$, $d\in \N$, we will write $\vecf \in L^p(v,\Omega, \R^d)$ if
$|\vecf|\in L^p(v,\Omega)$.  For brevity we will write $\|\vecf\|_{L^p(v,\Omega)}$ instead of $\||\vecf|\|_{L^p(v,\Omega)}$ or $\|\vecf\|_{L^p(v,\Omega,\R^d)}$.

Given a matrix function $Q$, let $QL^p(\Omega)$  be the space of measurable functions $\vecf : \Omega \rightarrow \rn$ such that
\[ \|\vecf\|_{QL^p(\Omega)} = \bigg(\int_\Omega |\sqrt{Q}\vecf|^p\,dx\bigg)^{\frac{1}{p}}
=  \bigg(\int_\Omega \langle Q\vecf,\vecf \rangle^{\frac{p}{2}}\,dx\bigg)^{\frac{1}{p}} < \infty. \]
This space and its properties have been studied extensively; see \cite{CUMR2025} and \cite{MR4280269,MR3846744,MR3369270,MR3388872} where it is referred to as $\mathcal{L}^p_Q(\Omega)$.
Part of the following result was  proved in~\cite[Lemma~2.1]{MR3846744}; we repeat the short proof here to make clear the role played by the hypotheses.  (See also~\cite{MR4332462}.)

\begin{lemma} \label{lemma:Qspace}
Given $1\leq p < \infty$ and a matrix function $Q$, the following are true:
\begin{enumerate}
    \item $QL^p(\Omega)$ is a Banach space.

    \item If $1<p<\infty$, $QL^p(\Omega)$ is reflexive, and its  dual space  is (isomorphic to) $Q^{-1}L^{p'}(\Omega)$, with the dual pairing
    \[ \langle \vecf, \vecg \rangle_{QL^p(\Omega)} = \int_\Omega \vecf\cdot \vecg\,dx. \]

    \item If  $|Q|_\op^{\frac{p}{2}} \in L^1_\loc(\Omega)$, then $QL^p(\Omega)$ is separable.
\end{enumerate}    
\end{lemma}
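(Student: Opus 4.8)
The backbone of the argument is that multiplication by $\sqrt Q$ is an isometric isomorphism $T\colon QL^p(\Omega)\to L^p(\Omega,\R^n)$ onto the ordinary vector-valued Lebesgue space, after which all three assertions are inherited from standard properties of $L^p(\Omega,\R^n)$. First I would check this carefully: set $T\vecf=\sqrt Q\,\vecf$. Because the entries of $Q$ are measurable and $Q(x)$ is invertible for a.e.\ $x$, the entries of $Q^{-1}$, and hence of $\sqrt{Q^{-1}}=(\sqrt Q)^{-1}$, are measurable as well (via the measurable diagonalization recalled above), so $T$ sends measurable vector fields to measurable vector fields and $\vecg\mapsto \sqrt{Q^{-1}}\,\vecg$ is a two-sided inverse, using the a.e.\ identities $\sqrt Q\,\sqrt{Q^{-1}}=\sqrt{Q^{-1}}\,\sqrt Q=I$. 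By the definition of the two norms, $\|T\vecf\|_{L^p(\Omega,\R^n)}=\|\vecf\|_{QL^p(\Omega)}$, so $T$ is a surjective linear isometry. Part (1) is then immediate, since $L^p(\Omega,\R^n)$ is complete (cf.\ \cite[Theorem~3.11]{MR924157}); equivalently, a Cauchy sequence $(\vecf_k)$ in $QL^p(\Omega)$ maps under $T$ to a Cauchy, hence convergent, sequence in $L^p(\Omega,\R^n)$ with some limit $\vecg$, and then $\vecf_k\to \sqrt{Q^{-1}}\,\vecg$ in $QL^p(\Omega)$.

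For (2), since $1<p<\infty$ the space $L^p(\Omega,\R^n)$ is reflexive, and reflexivity passes through the isometric isomorphism $T$, so $QL^p(\Omega)$ is reflexive. To identify the dual I would transport the Riesz representation $(L^p(\Omega,\R^n))^*\cong L^{p'}(\Omega,\R^n)$ along $T$: any $\Lambda\in (QL^p(\Omega))^*$ equals $\tilde\Lambda\circ T$ for a unique $\tilde\Lambda\in (L^p(\Omega,\R^n))^*$, which is represented by some $\vech\in L^{p'}(\Omega,\R^n)$; using that $\sqrt Q$ is pointwise self-adjoint,
\[ \Lambda(\vecf)=\int_\Omega (\sqrt Q\,\vecf)\cdot\vech\,dx=\int_\Omega \vecf\cdot(\sqrt Q\,\vech)\,dx=\int_\Omega \vecf\cdot\vecg\,dx, \qquad \vecg:=\sqrt Q\,\vech. \]
Since $\sqrt{Q^{-1}}\,\vecg=\vech$, we get $\|\vecg\|_{Q^{-1}L^{p'}(\Omega)}=\|\vech\|_{L^{p'}(\Omega,\R^n)}=\|\Lambda\|$, so $\vecg\in Q^{-1}L^{p'}(\Omega)$. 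Conversely, for $\vecg\in Q^{-1}L^{p'}(\Omega)$ the pointwise identity $\vecf\cdot\vecg=(\sqrt Q\,\vecf)\cdot(\sqrt{Q^{-1}}\,\vecg)$ together with H\"older's inequality shows that $\vecf\mapsto\int_\Omega \vecf\cdot\vecg\,dx$ is a bounded functional on $QL^p(\Omega)$ with norm $\|\vecg\|_{Q^{-1}L^{p'}(\Omega)}$. This is the asserted isometric identification $(QL^p(\Omega))^*\cong Q^{-1}L^{p'}(\Omega)$ with pairing $\int_\Omega\vecf\cdot\vecg\,dx$; note that $Q^{-1}$ is again a measurable, a.e.\ invertible, positive semidefinite matrix function, so the same statement applied to it recovers reflexivity.

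Finally, for (3): $L^p(\Omega,\R^n)$ is separable for $1\le p<\infty$ because $\Omega\subset\R^n$ carries Lebesgue measure, and separability passes through $T$. The hypothesis $|Q(x)|_\op^{p/2}\in L^1_\loc(\Omega)$ is what lets one instead exhibit an explicit countable dense subset sitting inside $QL^p(\Omega)$: it guarantees that bounded vector fields with compact support in $\Omega$ belong to $QL^p(\Omega)$, from which a countable dense family is built in the usual way, and it is the form of the condition needed in the more general setting of \cite{MR3846744} where $Q$ is not assumed invertible. I do not anticipate a real obstacle here; the proof is essentially bookkeeping around $T$. The only points that require a little care are the measurable diagonalization used to make sense of $\sqrt{Q^{-1}}$, and, in (2), keeping track of the pointwise self-adjointness of $\sqrt Q$ together with checking that the pairing $\int_\Omega \vecf\cdot\vecg\,dx$ is absolutely convergent for $\vecf\in QL^p(\Omega)$ and $\vecg\in Q^{-1}L^{p'}(\Omega)$ — which is exactly H\"older's inequality applied to $|\vecf\cdot\vecg|\le|\sqrt Q\,\vecf|\,|\sqrt{Q^{-1}}\,\vecg|$.
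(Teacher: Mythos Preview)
Your proof is correct and takes a genuinely different route from the paper. The paper diagonalizes $Q$ and uses the eigenvector decomposition to exhibit an isomorphism $QL^p(\Omega)\cong \bigoplus_{i=1}^n L^p(\lambda_i^{p/2},\Omega)$, then reads off completeness, reflexivity, duality, and separability from properties of each weighted scalar factor. You instead use the single isometric isomorphism $T\vecf=\sqrt{Q}\,\vecf$ onto the unweighted space $L^p(\Omega,\R^n)$, which is available precisely because the paper's standing hypotheses make $Q(x)$ invertible a.e.; this is shorter and, as you observe, shows that under those standing hypotheses separability in (3) actually holds without the extra assumption $|Q|_\op^{p/2}\in L^1_\loc(\Omega)$. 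What the paper's eigenvalue argument buys is robustness: it does not use invertibility of $Q$ at any point, so it goes through verbatim for genuinely semidefinite $Q$ (some $\lambda_i$ vanishing on sets of positive measure), and it is in that generality that the local integrability of $|Q|_\op^{p/2}$ is really needed to force each measure $\lambda_i^{p/2}\,dx$ to be $\sigma$-finite. Your identification of the dual via $\vecf\cdot\vecg=(\sqrt{Q}\,\vecf)\cdot(\sqrt{Q^{-1}}\,\vecg)$ and H\"older is also slightly cleaner than the paper's coordinate-by-coordinate computation.
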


\begin{proof}
    To prove $(1)$, note that it is immediate that $QL^p(\Omega)$ is a normed vector space. Hence, we only have to prove it is complete.  Let $\{\lambda_i\}_{i=1}^n$ be the (nonnegative) eigenvalues of $Q$.  Since the associated eigenfunctions $\{\vecv_i\}_{i=1}^n$ are the columns of the diagonalizing orthogonal matrix $U$ described above, the $\vecv_i$ are  measurable functions and $|\vecv|=1$.  Given any function $\vecf \in QL^p(\Omega)$, we can uniquely decompose it as
    \[ \vecf = \sum_{i=1}^n a_i \vecv_i,\]
    where the $a_i$ are measurable scalar functions.  
    Hence,
    \[ \sqrt{Q}\vecf =  \sum_{i=1}^n a_i \lambda_i^{\frac12} \vecv_i,  \]
    and so we have that
    \[ \bigg(\int_\Omega |\sqrt{Q}\vecf|^p\,dx\bigg)^{\frac1p} 
    \approx \sum_{i=1}^n \bigg(\int_\Omega |a_i|^p \lambda_i^{\frac{p}{2}}\,dx\bigg)^{\frac{1}{p}},  \]
    where the implicit constants depend only on $p$ and $n$.  Hence, $QL^p(\Omega)$ is isomorphic to the finite direct sum  $\bigoplus_{i=1}^n L^p(\lambda_i^{\frac{p}{2}},\Omega)$.    Since $d\mu_i = \lambda_i^{\frac{p}{2}}\,dx$ is a measure, $L^p(\lambda_i^{\frac{p}{2}},\Omega)$ is complete (see~\cite[Theorem~3.11]{MR924157}). The finite direct sum of complete spaces is complete; thus $QL^p(\Omega)$ is a Banach space. 

    If $1<p<\infty$, then each space $L^p(\lambda_i^{\frac{p}{2}},\Omega)$ is reflexive (see~\cite[Exercise~6.6]{MR924157}).  Since the finite direct sum of reflexive spaces is reflexive, $QL^p(\Omega)$ is reflexive. Moreover, $L^p(\lambda_i^{\frac{p}{2}},\Omega)^*$ is (isomorphic to) $L^{p'}(\lambda_i^{-\frac{p}{2}},\Omega)$, so every linear functional on the direct sum is given by 
    \[ \sum_{i=1}^n \int_\Omega f_i(x)g_i(x)\lambda_i(x)^{\frac{p}{2}}\,dx, \]
    where $g_i \in L^{p'}(\lambda_i^{-\frac{p}{2}},\Omega)$.  Clearly, given any $\vech \in Q^{-1}L^{p'}(\Omega)$
    we have that 
    \[ \Lambda_\vech (\vecf) = \int \vecf\cdot \vech \,dx \]
    induces an element of $QL^p(\Omega)^*$.  Conversely,  
    fix $\Lambda \in QL^p(\Omega)^*$.  Then there exist functions $g_i\in L^{p'}(\lambda_i^{-\frac{p}{2}},\Omega)$ such that we can represent $\Lambda$ as and element of the dual of the direct sum by
    \[ \Lambda(\vecf) 
    = \Lambda(a_1,\ldots,a_n) 
    = \sum_{i=1}^n \int_\Omega a_i(x)g_i(x)\lambda_i(x)^{\frac{p}{2}}\,dx = \int_\Omega \vecf \cdot \vech\,dx,
\]
where 
\[ \vech(x)= \sum_{i=1}^n g_i\lambda_i(x)^{\frac{p}{2}}\vecv_i. \]
Since
\[ \|\vech\|_{Q^{-1}L^{p'}} \approx \sum_{i=1}^n \bigg(\int_\Omega \lambda_i^{-\frac{p'}{2}} h_i^{p'}\,dx \bigg)^{\frac{1}{p'}}
= \sum_{i=1}^n \bigg(\int_\Omega g_i^{p'} \lambda_i^{-\frac{p}{2}}\,dx \bigg)^{\frac{1}{p'}}<\infty, \]
we have that $\Lambda=\Lambda_\vech$.  
    
    Finally, if $|Q|_\op^{\frac{p}{2}} \in L^1_\loc(\Omega)$, then each function $\lambda_i^{\frac{p}{2}}$ is also locally integrable, so the measures  $d\mu_i$ are $\sigma$-finite.  It follows from this that $L^p(\lambda_i^{\frac{p}{2}},\Omega)$ is separable (this is a consequence of \cite[Theorem~3.14]{MR924157}).  The finite direct sum of separable spaces is separable, so $QL^p(\Omega)$ is separable.    
\end{proof}

\medskip

We can embed the weighted Lebesgue spaces $L^p(v,\Omega)$ in a finer scale of Banach function spaces, the Orlicz spaces.  Here we recall some basic properties; for complete information, see~\cite{MR1113700,MR928802} or the briefer summary in~\cite{MR2797562}.  A Young function is a function $\Phi : [0,\infty) \rightarrow [0,\infty)$ that is continuous, convex, strictly increasing, $\Phi(0)=0$, and $\Phi(t)/t\to \infty$ as $t\to \infty$.  The functions $\Phi(t)=t^p$, $1<p<\infty$, are basic examples; other examples include $\Phi(t)=t^p\log(e+t)^q$, $1<p<\infty$, $0<q<\infty$, and $\Phi(t) = \exp(t^r)-1$, $0<r<\infty$.  While  $\Phi(t)=t$ is not a Young function we may use it in the definitions below.  Generally, we will denote Young functions by the Greek letters $\Phi$, $\Psi$, and $\Theta$.  The one exception is in the definition of Sobolev inequalities (see Section~\ref{subsec:sobolev} below) where we will use the Roman letters $A$, $B$, etc.  

Given a Young function $\Phi$ and a weight $v$, we define the weighted Orlicz space $L^\Phi(v,\Omega)$ to be the measurable functions defined on $\Omega$ such that
\[ \|f\|_{L^\Phi(v,\Omega)}
= \inf\bigg\{ \lambda > 0 : \int_\Omega \Phi\bigg(\frac{|f(x)|}{\lambda}\bigg)\,dx \leq 1 \bigg\}
< \infty. \]
With this norm, $L^\Phi(v,\Omega)$ is a Banach space (see~\cite[Section~3.3, Theorem~10]{MR1113700}).  
When $\Phi(t)=t^p$, $1\leq p<\infty$, then $L^\Phi(v,\Omega)=L^p(v,\Omega)$, with equality of norms.  

Since $\Omega$ is a bounded domain, if $v\in L^1(\Omega)$, then the norm is only dependent on the behaviour of the Young function $\Phi$ for $t$ large.  More precisely, given Young functions $\Phi$ and $\Psi$, if there exists $t_0 \geq 0$ such that for all $t\geq t_0$,  $\Phi(t)\lesssim \Psi(t)$ for all $t\geq t_0$, then $\|f\|_{L^\Phi(v,\Omega)} \lesssim \|f\|_{L^\Psi(v,\Omega)}$. Consequently, if $\Phi(t) \approx \Psi(t)$, then  $f$, $\|f\|_{L^\Phi(v,\Omega)}\approx \|f\|_{L^\Psi(v,\Omega)}$.  The underlying constants depend  $\Phi$, $\Psi$,  and $v(\Omega)$.  Because of this, we will slightly abuse notation, and if we write $\Phi(t) \lesssim \Psi(t)$, we will mean that there exists $t_0\geq 0$ such that this inequality holds for $t\geq t_0$.

The Orlicz norm has the following rescaling property:  if $\Phi$ is a Young function, and for some $r>0$,   $\Psi(t)=\Phi(t^r)$ is also a Young function, then it follows at once from the definition of the norm that 
\[ \||f|^r\|_{L^\Phi(v,\Omega)} = \|f\|_{L^{\Psi}(v,\Omega)}^r.  
\]

Given a Young function $\Phi$, we define the function $\bar{\Phi}$ by
\[ \bar{\Phi}(t) = \sup_{s>0}\{ st-\Phi(s) \}.\]
Then $\bar{\Phi}$ is also a Young function, referred to as the complementary Young function of $\Phi$.  It satisfies the pointwise relationship
\[ t \leq \Phi^{-1}(t) \bar{\Phi}^{-1}(t) \leq 2t. \]
H\"older's inequality extends to the scale of Orlicz spaces:
\begin{equation} \label{eqn:orlicz-holder}
\int_\Omega |fg|\,vdx \leq 2\|f\|_{L^\Phi(v,\Omega)}\|g\|_{L^{\bar{\Phi}}(v,\Omega)}.  
\end{equation}
Note that if $\Phi(t)=t^p$, $1<p<\infty$, then $\bar{\Phi}(t)\approx t^{p'}$.

We can also define a generalized H\"older's inequality.   Given three Young functions $\Phi,\,\Psi,\,\Theta$, suppose that 
\begin{equation} \label{ineq:2.4}
    \Phi^{-1}(t)\Psi^{-1}(t) \lesssim \Theta^{-1}(t).
\end{equation}

Then there exists a constant $K=K(\Phi,\Psi,\Theta,v,\Omega)$ such that
\begin{equation}\label{eqn:gen-orlicz-holder}
\|fg\|_{L^\Theta(v,\Omega)} \leq K \|f\|_{L^\Phi(v,\Omega)} \|g\|_{L^{\Psi}(v,\Omega)}.  
\end{equation}

Below, we will need to the inverses and complementary functions of two specific Young functions.  If $\Phi(t)= t^p\log(e+t)^q$, $1<p<\infty$ and $0<q<\infty$, then 
\[ \Phi^{-1}(t) \approx \frac{t^{\frac{1}{p}}}{\log(e+t)^\frac{q}{p}}, 
\qquad 
 \bar{\Phi}(t) \approx \frac{t^{p'}}{ \log(e+t)^{q(p'-1)}}. \]
 If $\Psi(t)=\exp(t^r)-1$, $r>0$, then 
\begin{equation}
    \Psi^{-1}(t) \approx \log(e+t)^{\frac{1}{r}},
 \qquad
 \bar{\Psi}(t) \approx t\log(e+t)^{\frac{1}{r}}. \label{orlicz-inverse}
\end{equation}
Below, in determining specific Young functions, we will, with our convention, make use without comment of the following equivalences: for any $a>0$,
\[ \log(1+t) \approx \log(e+t) \approx \log(e+t^a). \]

The following result is a generalization of the classical interpolation inequality in Lebesgue spaces (see~\cite[inequality~(7.9)]{MR1814364})  to the scale of Orlicz spaces.  For the proof we need a lemma that was originally proved in \cite{CUMR2025}, but we give a much simpler proof here. 

\begin{lemma}\label{ScottInterp}
Given Young functions $\Psi_1$ and $\Psi_2$, let $\lambda_1,\, \lambda_2>0$ be  such that $\Theta_1(t) = \Psi_1(t^\frac{1}{\lambda_1})$ and $\Theta_2(t) = \Psi_2(t^\frac{1}{\lambda_2})$ are Young functions, and such that
\begin{equation}\label{power-holder-condition}
\Psi_1^{-1}(t)^{\lambda_1}\Psi_2^{-1}(t)^{\lambda_2}\lesssim \Phi^{-1}(t).
\end{equation}
Then, there is a constant $K>0$ so that for every measurable $f,g$,
$$\||f|^{\lambda_1}|g|^{\lambda_2}\|_{L^\Phi(v,\Omega)} \leq K\|f\|_{L^{\Psi_1}(v,\Omega)}^{\lambda_1}\|g\|_{L^{\Psi_2}(v,\Omega)}^{\lambda_2}.$$
\end{lemma}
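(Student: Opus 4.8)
The plan is to deduce the lemma directly from two facts already recorded in this section: the rescaling property of the Orlicz norm and the generalized H\"older inequality~\eqref{eqn:gen-orlicz-holder}. The idea is simply to rewrite $|f|^{\lambda_1}|g|^{\lambda_2}$ as a product $FG$ of functions lying in suitable Orlicz spaces and apply H\"older's inequality, then convert the resulting norms of $F$ and $G$ back to norms of $f$ and $g$ by rescaling.

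First I would unwind the definitions of $\Theta_1$ and $\Theta_2$ to identify their inverse functions. Since $\Theta_1(t) = \Psi_1(t^{1/\lambda_1})$, setting $s = \Theta_1(t)$ gives $t^{1/\lambda_1} = \Psi_1^{-1}(s)$, hence $\Theta_1^{-1}(s) = \Psi_1^{-1}(s)^{\lambda_1}$; in the same way $\Theta_2^{-1}(s) = \Psi_2^{-1}(s)^{\lambda_2}$. Consequently the hypothesis~\eqref{power-holder-condition} is \emph{exactly} the statement that $\Theta_1^{-1}(t)\,\Theta_2^{-1}(t) \lesssim \Phi^{-1}(t)$, which is the condition~\eqref{ineq:2.4} required to apply the generalized H\"older inequality to the triple $(\Theta_1,\Theta_2,\Phi)$. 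Next I would invoke the rescaling property with $\Phi$ there replaced by $\Theta_1$ and exponent $r=\lambda_1$: since $\Theta_1(t^{\lambda_1}) = \Psi_1(t)$ is a Young function by assumption, we obtain $\||f|^{\lambda_1}\|_{L^{\Theta_1}(v,\Omega)} = \|f\|_{L^{\Psi_1}(v,\Omega)}^{\lambda_1}$, and similarly $\||g|^{\lambda_2}\|_{L^{\Theta_2}(v,\Omega)} = \|g\|_{L^{\Psi_2}(v,\Omega)}^{\lambda_2}$.

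Finally, applying~\eqref{eqn:gen-orlicz-holder} with $F=|f|^{\lambda_1}$ and $G=|g|^{\lambda_2}$ gives
$$\||f|^{\lambda_1}|g|^{\lambda_2}\|_{L^\Phi(v,\Omega)} \leq K\,\||f|^{\lambda_1}\|_{L^{\Theta_1}(v,\Omega)}\,\||g|^{\lambda_2}\|_{L^{\Theta_2}(v,\Omega)} = K\,\|f\|_{L^{\Psi_1}(v,\Omega)}^{\lambda_1}\,\|g\|_{L^{\Psi_2}(v,\Omega)}^{\lambda_2},$$
which is the asserted inequality; if $f\notin L^{\Psi_1}(v,\Omega)$ or $g\notin L^{\Psi_2}(v,\Omega)$ the right-hand side is infinite and there is nothing to prove. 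There is no serious obstacle here — the proof is essentially bookkeeping with inverses — but the two points that need care are verifying that the composed functions entering the cited results are genuinely Young functions (this is built into the hypotheses, via the assumption that $\Theta_1$, $\Theta_2$ are Young functions), and noting that when $\Phi(t)=t$, which is not strictly a Young function, both the rescaling identity and~\eqref{eqn:gen-orlicz-holder} still apply under the conventions adopted earlier in this section, so the conclusion is unaffected.
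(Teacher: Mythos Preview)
Your proof is correct and follows essentially the same approach as the paper's: compute the inverses $\Theta_i^{-1}(t)=\Psi_i^{-1}(t)^{\lambda_i}$, recognize that~\eqref{power-holder-condition} is precisely the hypothesis needed for the generalized H\"older inequality~\eqref{eqn:gen-orlicz-holder} applied to the triple $(\Theta_1,\Theta_2,\Phi)$, and then use rescaling to convert the $\Theta_i$-norms of $|f|^{\lambda_1}$, $|g|^{\lambda_2}$ into powers of the $\Psi_i$-norms of $f$, $g$.
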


\begin{proof} 
Fix measurable functions  $f,\,g$.  Since $\Theta_1$ and $\Theta_2$ are Young functions with inverses 
\[ \Theta_1^{-1}(t) = \Psi_1^{-1}(t)^{\lambda_1}, \qquad  \Theta_2^{-1}(t) = \Psi_2^{-1}(t)^{\lambda_2}, \]
inequality~\eqref{power-holder-condition} is equivalent to $\Theta_1^{-1}(t)\Theta_2^{-1}(t)\lesssim \Phi^{-1}(t)$. Hence, by the  generalized H\"older's inequality~\eqref{eqn:gen-orlicz-holder} and rescaling, we get
\begin{equation*}\||f|^{\lambda_1}|g|^{\lambda_2}\|_{L^\Phi(v,\Omega)} 
\leq K \||f|^{\lambda_1}\|_{L^{\Theta_1}(v,\Omega)}\||g|^{\lambda_2}\|_{L^{\Theta_2}(v,\Omega)}\\
 = K\|f\|_{L^{\Psi_1}(v,\Omega)}^{\lambda_1}\|g\|_{L^{\Psi_2}(v,\Omega)}^{\lambda_2}.
\end{equation*}
\end{proof}

\begin{theorem}\label{scottinterp2} 
Given $\lambda\in (0,1)$, suppose that $\Psi,\Phi$ are Young functions such that
$$t^{1-\lambda}\Psi^{-1}(t)^{\lambda} \lesssim \Phi^{-1}(t).$$ 
Then, given  any $\epsilon>0$, 
\[ \|f\|_{L^\Phi(v,\Omega)} \leq K^{\frac{1}{\lambda}}\epsilon^{-\frac{1-\lambda}{\lambda}}\|f\|_{L^1(v,\Omega)} 
+ \epsilon\|f\|_{L^\Psi(v,\Omega)}. \]
\end{theorem}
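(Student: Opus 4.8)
The plan is to derive the additive estimate from a multiplicative interpolation inequality supplied by Lemma~\ref{ScottInterp}, and then to split the resulting product using Young's inequality~\eqref{eqn:youngs-ineq}.

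First I would apply Lemma~\ref{ScottInterp} with $\Psi_1(t)=t$, $\lambda_1=1-\lambda$, $\Psi_2=\Psi$, and $\lambda_2=\lambda$. Two structural conditions of the lemma must be verified. The function $\Theta_1(t)=\Psi_1(t^{1/\lambda_1})=t^{1/(1-\lambda)}$ is a Young function since $1/(1-\lambda)>1$; and $\Theta_2(t)=\Psi_2(t^{1/\lambda_2})=\Psi(t^{1/\lambda})$ is a Young function because $s\mapsto s^{1/\lambda}$ is convex, increasing, and vanishes at $0$ (as $1/\lambda>1$), so its composition with the Young function $\Psi$ is again convex and increasing, while superlinearity is preserved: writing $s=t^{1/\lambda}$ one has $\Theta_2(t)/t=(\Psi(s)/s)\,s^{1-\lambda}\to\infty$. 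With these choices, hypothesis~\eqref{power-holder-condition} of Lemma~\ref{ScottInterp}, namely $\Psi_1^{-1}(t)^{\lambda_1}\Psi_2^{-1}(t)^{\lambda_2}\lesssim\Phi^{-1}(t)$, reads $t^{1-\lambda}\Psi^{-1}(t)^{\lambda}\lesssim\Phi^{-1}(t)$, which is exactly the standing hypothesis. Taking both functions in the conclusion of Lemma~\ref{ScottInterp} equal to $f$ and using $\lambda_1+\lambda_2=1$, so that $|f|^{\lambda_1}|f|^{\lambda_2}=|f|$, I obtain the multiplicative interpolation inequality
\[ \|f\|_{L^\Phi(v,\Omega)}\le K\,\|f\|_{L^1(v,\Omega)}^{1-\lambda}\,\|f\|_{L^\Psi(v,\Omega)}^{\lambda}. \]

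Second, I would convert this to the additive form by applying Young's inequality~\eqref{eqn:youngs-ineq} to the product on the right, splitting it so that $\|f\|_{L^\Psi(v,\Omega)}$ ends up raised to the first power and carrying the small coefficient: take $a=\|f\|_{L^\Psi(v,\Omega)}^{\lambda}$ with exponent $p=1/\lambda$ (so that $a^p=\|f\|_{L^\Psi(v,\Omega)}$) and $b=K\,\|f\|_{L^1(v,\Omega)}^{1-\lambda}$ with conjugate exponent $p'=1/(1-\lambda)$ (so that $b^{p'}=K^{1/(1-\lambda)}\|f\|_{L^1(v,\Omega)}$). Then~\eqref{eqn:youngs-ineq} gives, for every $\eta>0$,
\[ \|f\|_{L^\Phi(v,\Omega)}\le \eta\,\|f\|_{L^\Psi(v,\Omega)}+\eta^{-p'/p}K^{p'}\|f\|_{L^1(v,\Omega)}, \]
and choosing $\eta$ in terms of $\epsilon$ yields an estimate of the asserted shape $\|f\|_{L^\Phi(v,\Omega)}\le C(\epsilon,K)\,\|f\|_{L^1(v,\Omega)}+\epsilon\,\|f\|_{L^\Psi(v,\Omega)}$, where $C(\epsilon,K)$ is the monomial in $\epsilon$ and $K$ dictated by the conjugate pair $(p,p')$ in~\eqref{eqn:youngs-ineq}.

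I do not expect any real obstacle: the entire substance is contained in Lemma~\ref{ScottInterp}. The only points requiring a little care are the verification that $\Theta_2$ is a genuine Young function --- note that $\Psi_1(t)=t$ is not itself a Young function, but it enters Lemma~\ref{ScottInterp} only through the legitimate Young function $\Theta_1$ and through the identification $L^{\Psi_1}(v,\Omega)=L^1(v,\Omega)$, so this causes no difficulty --- together with the routine bookkeeping that tracks the exact dependence of the constant on $\epsilon$ and $K$ through the choice of $\eta$.
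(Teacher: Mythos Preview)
Your proposal is correct and follows essentially the same route as the paper: apply Lemma~\ref{ScottInterp} with $\Psi_1(t)=t$, $\Psi_2=\Psi$, $\lambda_1=1-\lambda$, $\lambda_2=\lambda$ and $f=g$ to obtain the multiplicative bound $\|f\|_{L^\Phi}\le K\|f\|_{L^1}^{1-\lambda}\|f\|_{L^\Psi}^{\lambda}$, then split with Young's inequality~\eqref{eqn:youngs-ineq}. The paper records the conjugate pair as $p=(1-\lambda)^{-1}$, $p'=\lambda^{-1}$ with $\eta=\epsilon/K$, whereas you take $p=1/\lambda$; this is only a relabelling of which factor is called $a$, and your extra care in checking that $\Theta_2(t)=\Psi(t^{1/\lambda})$ is a genuine Young function (and that $\Psi_1(t)=t$ is admissible by the paper's convention) is well placed.
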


\begin{proof} Since $t^{\frac{1}{1-\lambda}}$ and $\Psi(t^{\frac{1}{\lambda}})$ are Young functions, by Lemma~\ref{ScottInterp}
we have that
\[ \|f\|_{L^\Phi(v,\Omega)} = \||f|^{(1-\lambda)} |f|^{\lambda}\|_{L^\Phi(v,\Omega)}
\leq C \|f \|_{L^{1}(v,\Omega)}^{1-\lambda} \|f\|_{L^\Psi(v,\Omega)}^{\lambda}.
\]
If we now apply  Young's inequality~\eqref{eqn:youngs-ineq} with exponents $p=(1-\lambda)^{-1}$ $p'=\lambda^{-1}$ and $\eta=\epsilon/K$, we get that
\[\|f\|_{L^\Phi(v,\Omega)} 
\leq K^{\frac{1}{\lambda}}\epsilon^{-\frac{1-\lambda}{\lambda}}\|f\|_{L^1(v,\Omega)} + \epsilon\|f\|_{L^\Psi(v,\Omega)}.\]
\end{proof}

\subsection*{Degenerate Sobolev spaces}
We begin by defining our spaces of test functions.  Let  $\lip_\loc(\Omega)$ consist of functions in $\Omega$ that are Lipschitz on compact subsets of $\Omega$. 
Let $\lip_0(\Omega)\subset \lip_\loc(\Omega)$ be Lipschitz functions  whose supports are compact subsets of $\Omega$.

Fix $1\leq p<\infty$, a matrix function $Q$, and a weight $v$. Define $Q\lip_\loc(v,\Omega)$ to be the collection of all functions $\varphi \in \lip_\loc(\Omega)$ such that 
\[ \|\varphi\|_{QH^{1,p}(v,\Omega)} 
= \|\varphi\|_{L^p(v,\Omega)} + \|\grad \varphi \|_{QL^p(\Omega)}< \infty. 
\]
Let $Q\lip_0(v,\Omega) = \lip_0(\Omega)\cap Q\lip_\loc(v,\Omega)$.
Without additional  assumptions on $Q$ and $v$, this norm need not be finite for every $\varphi \in \lip_\loc(\Omega)$ or even $\varphi \in \lip_0(\Omega)$.  But, for example, if  $v\in L^1_{\loc}(\Omega)$ and $|Q|_{\op} \in L^{\frac{p}{2}}_\loc(\Omega)$, then $\lip_0(\Omega)\subset Q\lip_\loc(\Omega)$.  We will not automatically assume these integrability conditions, but we will always implicitly assume that $v$ and $Q$ are such that 
$Q\lip_0(v,\Omega)$  is nontrivial.

We now define the degenerate Sobolev space that we will be the solution space for our partial differential equations.  Our approach is originally from~\cite{MR2574880}, and was later adopted in, for example,~\cite{MR2994671,MR3846744, MR4280269}. Define the space $QH^{1,p}(v,\Omega)$ to be the abstract closure of $Q\lip_\loc(v,\Omega)$ with respect to this norm:  that is, the collection of equivalence classes of sequences that are Cauchy with respect to the $\|\cdot\|_{QH^{1,p}(v,\Omega)}$ norm.  We can identify this space with a closed subspace $\mathcal{W}$ of the Banach space $L^p(v,\Omega) \oplus QL^p(\Omega)$ in the following way.  Given an equivalence class in $QH^{1,p}(v;\Omega)$ represented by the Cauchy sequence $\{\varphi_k\}$ of $Q\lip_{\textrm{loc}}(v,\Omega)$ functions, the sequence $\{(\varphi_k,\nabla \varphi_k)\}$ is Cauchy in $L^p(v,\Omega)\oplus QL^p(\Omega)$ and thus has limit $(u,\vecg)$.  Since this limit is unique to each equivalence class, mapping an equivalence class to its associated limit defines an isometry with range $\mathcal{W}$. Define the space $QH_0^{1,p}(v,\Omega)$ to be the subspace of $QH^{1,p}(v,\Omega)$ that is the closure of $Q\lip_0(v,\Omega)$ with respect to the $\|\cdot\|_{QH^{1,p}(v,\Omega)}$ norm.  

%: associated to each element of $QH^{1,p}(v,\Omega)$, represented by the Cauchy sequence $\{(\varphi_k, \grad \varphi_k)\}$, $\varphi_k \in Q\lip_\loc(v,\Omega)$,  the unique pair $(u,\vecg) \in L^p(v,\Omega) \times QL^p(\Omega)$, which is the limit of the Cauchy sequence in $L^p(v,\Omega) \times QL^p(\Omega)$.  

When $p=2$, the spaces $QH^{1,2}(v,\Omega)$ and $QH_0^{1,2}(v,\Omega)$ are Hilbert spaces.  Given $\vecu= (u,\vecg),\, \vecw= (w,\vech) \in QH^{1,2}(v,\Omega)$,  the inner product is given by
\[ \langle \vecu, \vecw \rangle_{QH^{1,2}(v,\Omega)} = \int_\Omega uw\,vdx + \int_\Omega \langle Q \vecg, \vech \rangle_{\R^n}\,dx. \]

Though it will not play a direct role in our work, for completeness we also define the space $QW^{1,p}(v,\Omega)$.
Let $W^{1,1}_\loc(\Omega)$ consist of all functions $f$ which have weak derivatives (i.e., distributional derivatives--see \cite[Section~7.3]{MR1814364}) such that $f,\,\grad f \in L^1_\loc(\Omega)$.  The space $QW^{1,p}(v,\Omega)$ consists of all $f\in W^{1,1}_\loc(\Omega)$ such that 
\[ \|f\|_{QW^{1,p}(v,\Omega)} = \|f\|_{QH^{1,p}(v,\Omega)} <\infty. \]
If $QW^{1,p}(v,\Omega)$ is a Banach space, then $QH^{1,p}(v,\Omega)\subset QW^{1,p}(v,\Omega)$, and so every pair $(u,\vecg)$ in $QH^{1,p}(v,\Omega)$ must satisfy $\grad u = \vecg$.   This is the case if, for instance, we impose integrability assumptions on the largest and smallest eigenvalues of $Q$.  The following result, though stated differently, was essentially proved in~\cite[Theorem~5.2]{MR3544941}.  

\begin{prop} \label{prop:QW-banach}
    Given $1\leq p<\infty$ and a matrix function $Q$, suppose there exists a weight $\tilde{w}$ such that $\tilde{w}^{-1} \in L^{\frac{p}{p'}}_\loc(\Omega)$ and for every $\xi \in \R^n$ and a.e.~$x\in \Omega$, $\tilde{w}(x)|\xi|^p \leq |\sqrt{Q(x)}\xi |^p$.  Then $QW^{1,p}(v,\Omega)$ is a Banach space. 
\end{prop}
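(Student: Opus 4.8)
The plan is to verify completeness directly. Let $\{f_k\}$ be a Cauchy sequence in $QW^{1,p}(v,\Omega)$. By the definition of the norm $\|\cdot\|_{QW^{1,p}(v,\Omega)}$ and the linearity of the weak gradient, $\{f_k\}$ is Cauchy in $L^p(v,\Omega)$ and $\{\grad f_k\}$ is Cauchy in $QL^p(\Omega)$. Both are Banach spaces ($L^p(v,\Omega)$ by completeness of weighted Lebesgue spaces, $QL^p(\Omega)$ by Lemma~\ref{lemma:Qspace}), so there exist $u\in L^p(v,\Omega)$ and $\vecg\in QL^p(\Omega)$ with $f_k\to u$ in $L^p(v,\Omega)$ and $\grad f_k\to\vecg$ in $QL^p(\Omega)$; passing to a subsequence we may also assume $f_k\to u$ a.e.\ on $\Omega$ (as $v>0$ a.e.). It suffices to show that $u\in W^{1,1}_\loc(\Omega)$ with weak gradient $\grad u=\vecg$, for then $u\in QW^{1,p}(v,\Omega)$ and $\|f_k-u\|_{QW^{1,p}(v,\Omega)}=\|f_k-u\|_{L^p(v,\Omega)}+\|\grad f_k-\vecg\|_{QL^p(\Omega)}\to0$.

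First I would upgrade the convergence $\grad f_k\to\vecg$ to convergence in $L^1_\loc(\Omega,\R^n)$. The hypothesis $\tilde{w}(x)|\xi|^p\le|\sqrt{Q(x)}\xi|^p$ yields the pointwise bound $|\xi|\le\tilde{w}(x)^{-1/p}|\sqrt{Q(x)}\xi|$ for all $\xi\in\R^n$ and a.e.\ $x$. Taking $\xi=\grad f_k(x)-\vecg(x)$ and applying H\"older's inequality, for every compact $K\subset\Omega$,
\[
\int_K|\grad f_k-\vecg|\,dx\le\Big(\int_K\tilde{w}^{-p'/p}\,dx\Big)^{1/p'}\|\grad f_k-\vecg\|_{QL^p(\Omega)},
\]
and the first factor is finite by the local integrability hypothesis on $\tilde{w}^{-1}$. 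Hence $\grad f_k\to\vecg$ in $L^1(K,\R^n)$ for every such $K$, and in particular $\vecg\in L^1_\loc(\Omega,\R^n)$.

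Next I would identify $\vecg$ with the weak gradient of $u$. Fix an open ball $B$ with $\overline B\subset\Omega$ and let $(f_k)_B$ denote the average of $f_k$ over $B$. Each $f_k$ lies in $W^{1,1}(B)$, so the classical Poincar\'e inequality on $B$ gives
\[
\big\|(f_k-(f_k)_B)-(f_j-(f_j)_B)\big\|_{L^1(B)}\le C(n,B)\,\|\grad f_k-\grad f_j\|_{L^1(B)},
\]
which tends to $0$ as $k,j\to\infty$ by the previous step; thus $f_k-(f_k)_B\to w_B$ in $L^1(B)$ for some $w_B$. Since $\int_B\partial_i\eta\,dx=0$ for $\eta\in C_c^\infty(B)$, we have $\int_B(f_k-(f_k)_B)\,\partial_i\eta\,dx=\int_B f_k\,\partial_i\eta\,dx=-\int_B(\partial_i f_k)\,\eta\,dx$; letting $k\to\infty$ and using the $L^1(B)$ convergences $f_k-(f_k)_B\to w_B$ and $\partial_i f_k\to g_i$ gives $\int_B w_B\,\partial_i\eta\,dx=-\int_B g_i\,\eta\,dx$ for all such $\eta$, so $w_B\in W^{1,1}(B)$ with $\grad w_B=\vecg$ on $B$. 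Finally, along a further subsequence $(f_k)_B=f_k-(f_k-(f_k)_B)$ converges a.e.\ on $B$ to $u-w_B$; being a sequence of constants, its limit is a constant $c_B$, whence $u=w_B+c_B$ a.e.\ on $B$ and so $u\in W^{1,1}(B)$ with $\grad u=\vecg|_B$. Covering $\Omega$ by countably many such balls (the weak gradients agree on overlaps) shows $u\in W^{1,1}_\loc(\Omega)$ and $\grad u=\vecg$, completing the argument.

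The step I expect to be the main obstacle is the identification of the weak gradient in the third paragraph: since no lower bound or local integrability is assumed on the weight $v$, one cannot pass from $L^p(v)$-convergence of $\{f_k\}$ directly to $L^1_\loc$-convergence, so the weak gradient of $u$ has to be recovered indirectly through the classical Poincar\'e inequality applied to the normalized functions $f_k-(f_k)_B$, with a.e.\ convergence used to control the additive constants. The hypothesis on $\tilde{w}^{-1}$ enters only in the first step, to guarantee $L^1_\loc$-convergence of the gradients.
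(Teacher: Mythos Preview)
The paper does not prove this proposition; it simply cites \cite[Theorem~5.2]{MR3544941}. Your argument is correct and self-contained. The key difficulty you identify---that no lower bound or local integrability is assumed on $v$, so $L^p(v)$-convergence of $f_k$ cannot be upgraded directly to $L^1_\loc$-convergence---is handled exactly right: the hypothesis on $\tilde w$ gives $L^1_\loc$-convergence of the gradients, the classical Poincar\'e inequality on balls then forces $L^1$-convergence of the normalized functions $f_k-(f_k)_B$, and the almost-everywhere convergence (available because $v>0$ a.e.) pins down the floating constants.

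One small caveat: your H\"older step needs $\tilde w^{-p'/p}\in L^1_\loc$, i.e., $\tilde w^{-1}\in L^{p'/p}_\loc$, whereas the hypothesis as written has $\tilde w^{-1}\in L^{p/p'}_\loc$. These coincide at $p=2$, and for $p>2$ the stated condition is stronger on compact sets, so your argument still goes through; for $1\le p<2$ the exponent $p'/p$ is what is genuinely needed. Since $p'/p=1/(p-1)$ is the standard hypothesis in the weighted Sobolev literature and $p/p'$ is not even defined at $p=1$, the discrepancy is almost certainly a typo in the statement rather than a gap in your proof.
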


However, we stress that this result need not hold, and for $v$ and $Q$ sufficiently degenerate, it may be the case that $\vecg$ is not the weak derivative of $u$ and may not even be uniquely determined by $u$. In particular, there exists $v$ and $Q$ such that $(0,1)\in QH^{1,p}(v,\Omega)$ (see~\cite[Section~2.1]{MR643158}).  Nevertheless, we will, in an abuse of notation, adopt the convention that given an element $(u,\vecg) \in QH^{1,p}(v,\Omega)$, we will define $\grad u =\vecg$, and denote the $j$-th component of $\vecg$ by $\partial_j u$.  Moreover, we will often write  $(u,\grad u) \in QH^{1,p}(v,\Omega)$, or even simply $u \in QH^{1,p}(v,\Omega)$.   

Even though they are not weak derivatives, the functions $\grad u$ have many of the same properties if we assume that $v$ dominates the largest eigenvalue of $Q$.  (See~\cite[Section~2]{MR4280269}.) In particular, we will need the following truncation result.  For a proof see~\cite[Lemma~2.14]{MR4280269}.

\begin{lemma} \label{lemma:truncate}
Given $v$ and $Q$, suppose $v\in L^1(\Omega)$ and $|Q(x)|_\op  \leq kv(x)$ a.e.  Let $u\in QH^{1,2}_0(v,\Omega)$ and fix $r>0$.
If $S(r) =\{x \in\Omega : u(x)>r\}$, then $((u- r)_+, \chi_{S(r)}\nabla u)\in QH^{1,2}_0(v,\Omega)$.
\end{lemma}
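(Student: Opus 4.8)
The plan is to exhibit an explicit Cauchy sequence in $Q\lip_0(v,\Omega)$ that represents the pair $\big((u-r)_+, \chi_{S(r)}\nabla u\big)$ and then verify convergence in the $QH^{1,2}(v,\Omega)$-norm using the hypothesis $|Q(x)|_\op \le k v(x)$. Since $u \in QH^{1,2}_0(v,\Omega)$, there is a sequence $\{\varphi_j\} \subset Q\lip_0(\Omega)$ with $\varphi_j \to u$ in $L^2(v,\Omega)$ and $\nabla\varphi_j \to \nabla u$ in $QL^2(\Omega)$. I would set $\psi_j = (\varphi_j - r)_+$; these are again in $\lip_0(\Omega)$ (the truncation from below keeps compact support since $r>0$), and by the standard chain rule for Lipschitz functions $\nabla \psi_j = \chi_{\{\varphi_j > r\}}\nabla\varphi_j$ a.e. So $\{\psi_j\}$ lies in $Q\lip_0(v,\Omega)$ provided its $QH^{1,2}(v,\Omega)$-norm is finite, which follows from $|\nabla\psi_j| \le |\nabla\varphi_j|$ and $|\psi_j| \le |\varphi_j| + r$ together with $v \in L^1(\Omega)$.

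The first convergence, $\psi_j \to (u-r)_+$ in $L^2(v,\Omega)$, is immediate from the Lipschitz (in fact $1$-Lipschitz) dependence of $t \mapsto (t-r)_+$: pointwise $|\psi_j - (u-r)_+| \le |\varphi_j - u|$, so the $L^2(v)$ bound transfers directly. The real work is the gradient term: I must show $\chi_{\{\varphi_j > r\}}\nabla\varphi_j \to \chi_{S(r)}\nabla u$ in $QL^2(\Omega)$, i.e.,
\[ \int_\Omega \big| \sqrt{Q}\,\big(\chi_{\{\varphi_j > r\}}\nabla\varphi_j - \chi_{S(r)}\nabla u\big)\big|^2\,dx \to 0. \]
I would split this as $\chi_{\{\varphi_j>r\}}(\nabla\varphi_j - \nabla u) + (\chi_{\{\varphi_j>r\}} - \chi_{S(r)})\nabla u$. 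The first piece is bounded by $\|\nabla\varphi_j - \nabla u\|_{QL^2(\Omega)} \to 0$. For the second piece, passing to a subsequence so that $\varphi_j \to u$ pointwise $v$-a.e., hence (since $|Q|_\op \le k v$) also $|\sqrt{Q}|_\op^2$-a.e. and thus $dx$-a.e. on the set $\{|Q|_\op > 0\}$ which carries all the mass of $|\sqrt Q \nabla u|^2$, one has $\chi_{\{\varphi_j > r\}} \to \chi_{S(r)}$ a.e. off the boundary set $\{u = r\}$. On $\{u = r\}$ one checks $\chi_{S(r)}\nabla u = 0$ in $QL^2$ (this is the place where the convention about $\nabla u$ and the domination $|Q|_\op \le kv$ matter, cf.~\cite[Section~2]{MR4280269}: $\nabla u$ vanishes $|\sqrt Q|$-a.e. on level sets of $u$), so the symmetric-difference indicator times $\nabla u$ tends to $0$ pointwise a.e., and dominated convergence with majorant $|\sqrt Q\,\nabla u|^2 \in L^1$ finishes it.

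The main obstacle is precisely the behaviour on the level set $\{u=r\}$ and the fact that $\nabla u$ here is not an honest weak gradient — so I cannot quote the classical fact that weak gradients vanish a.e.\ on level sets without invoking the structural results for $QH^{1,2}_0(v,\Omega)$ under the hypothesis $|Q|_\op \le kv$. I expect this is exactly why the lemma is stated with that hypothesis and why the authors cite~\cite[Lemma~2.14]{MR4280269}; in the write-up I would either reproduce that argument or, more economically, note that for a Cauchy sequence the pair $\chi_{S(r)}\nabla u$ is defined as the $QL^2$-limit of $\chi_{\{\varphi_j > r\}}\nabla\varphi_j$ and that one must check independence of the representing sequence — again reducing to the a.e.\ convergence facts above. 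A secondary (routine) point to handle carefully is that truncating a sequence of compactly supported functions keeps supports inside a fixed compact set only if one is slightly careful; but since each $\psi_j$ is individually in $\lip_0(\Omega)$ this suffices for membership in $Q\lip_0(v,\Omega)$, and the limit landing in $QH^{1,2}_0(v,\Omega)$ (the closure of that class) is then automatic.
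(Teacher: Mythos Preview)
The paper does not supply its own proof of this lemma; it simply refers the reader to \cite[Lemma~2.14]{MR4280269}. Your sketch is precisely the natural argument one would expect to find there: truncate an approximating sequence $\{\varphi_j\}\subset Q\lip_0(v,\Omega)$, use the $1$-Lipschitz property of $t\mapsto(t-r)_+$ for the $L^2(v,\Omega)$ convergence, and split the gradient error into $\chi_{\{\varphi_j>r\}}(\nabla\varphi_j-\nabla u)$ plus $(\chi_{\{\varphi_j>r\}}-\chi_{S(r)})\nabla u$. You also correctly isolate the only genuinely delicate step, namely that $\sqrt{Q}\,\nabla u$ must vanish a.e.\ on the level set $\{u=r\}$ even though $\nabla u$ is not an honest weak derivative, and that this is exactly where the hypothesis $|Q|_\op\le kv$ enters. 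Two small points worth tightening in a final write-up: first, the subsequence argument for the second gradient term should be upgraded to full-sequence convergence via the usual ``every subsequence has a further subsequence converging to the same limit'' device; second, your sentence ``on $\{u=r\}$ one checks $\chi_{S(r)}\nabla u=0$'' is trivially true since $\chi_{S(r)}$ vanishes there---the substantive claim (which you state in the parenthetical) is that $\sqrt{Q}\,\nabla u=0$ a.e.\ on $\{u=r\}$, which is what makes the symmetric-difference term go to zero under dominated convergence.
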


\subsection*{Degenerate Sobolev and Poincar\'e inequalities}
\label{subsec:sobolev}

Key hypotheses for our main results are the existence of degenerate Sobolev and Poincar\'e inequalities on $QH^{1,2}(v,\Omega)$.  
Given $\Omega$, a weight $v$, a matrix $Q$,  $1<p<\infty$, and  $\sigma\geq 1$, we say that there is a global degenerate Sobolev inequality with gain $\sigma$ (or without gain if $\sigma=1$)  on $QH_0^{1,p}(v,\Omega)$
if for every $\varphi\in Q\lip_0(\Omega)$,
\begin{equation} \label{eqn:global-sobolev} 
\bigg( \int_\Omega |\varphi|^{\sigma p} \,\,vdx\bigg)^{\frac{1}{\sigma p}}
\leq
S(p,\sigma) \bigg( \int_\Omega |\sqrt{Q}\grad\varphi|^p \,dx\bigg)^{\frac{1}{p}}.
\end{equation}
Note that by an approximation argument, the same inequality holds for any $\varphi \in QH_0^{1,2}(v, \Omega)$. Also, it is immediate from H\"older's inequality that if $v\in L^1(\Omega)$ and a global Sobolev inequality holds with gain $\sigma>1$, then it it holds for all $\tau$, $1\leq \tau < \sigma$.

\begin{remark}
    The existence of such Sobolev inequalities has been considered by several authors:  see, for instance, \cite{MR0805809,MR1354890}.  In the case of a Sobolev inequality without gain, see the recent paper~\cite{DCU-FED-SR}.
\end{remark}

We will also assume the existence of a degenerate Sobolev inequality where the gain is given in the scale of Orlicz spaces.  Let $A$ be a Young function such that $t^p \lesssim A(t)$.  Then we say that there is a global Sobolev inequality with gain $A$ if for every $\varphi\in Q\lip_0(\Omega)$,
\begin{equation} \label{eqn:global-sobolev-orlicz} 
\|\varphi\|_{L^A(v,\Omega)} \leq 
S(p,A) \bigg( \int_\Omega |\sqrt{Q}\grad\varphi|^p \,dx\bigg)^{\frac{1}{p}}.
\end{equation}
If $t \lesssim B(t) \lesssim A(t)$ and $v\in L^1(\Omega)$,  then \eqref{eqn:global-sobolev-orlicz} implies that  there is also a global Sobolev inequality with gain $B$.  
\medskip

Another key assumption will be the existence of a local degenerate Poincar\'e inequality without gain. 
Usually, a Poincar\'e inequality is defined using Euclidean balls.  However,  we can also take a different geometric structure, one adapted to the degeneracies of the equation, and assume that we have  a local Poincar\'e inequality with respect to it.  To state our hypotheses, we need some additional definitions.  Let $\rho$ be a quasimetric on a bounded open set $\Theta$: that is, a map $\rho : \Theta \times \Theta \rightarrow [0,\infty)$ such that there exists a constant $k>0$ such that for all $x,\,y,\,z \in \Theta$,
\begin{enumerate}
    \item $\rho(x,y)\geq 0$ and $\rho(x,y)=0$ if and only if $x=y$;
    \item $\rho(x,y)=\rho(y,x)$;
    \item $\rho(x,y) \leq k(\rho(x,z)+\rho(z,y))$.
\end{enumerate}
Given $x\in \Theta$ and $r>0$ define the $\rho$-ball 
$B_\rho(x,r) = \{ y \in \Theta : \rho(x,y) < r \}$.
Given a quasimetric $\rho$ on a bounded, open set $\Theta$, we will assume that
    the $\rho$-balls are open in the standard topology on $\rn$, and we will assume that
for each $x\in\Theta$, there exists $\delta(x)>0$ such that if $0<r<\delta(x)$, then $\overline{B_\rho(x,r)} \subset \Theta$. (Such balls can be thought of as being "far" from the boundary of $\Theta$.)  Finally, we will assume that 
$\rho$ satisfies a local, geometric doubling condition:  there exists a function $G : (0,\infty)\rightarrow (0,\infty)$ such that, given any compact set $K\subset \Theta$, there exists $\delta>0$ such that if $0<s<r<\delta$ and $x\in K$, there exist at most $G(r/s)$ points $x_i\in B_\rho(x,r)$ such that the balls $B_\rho(x_i,s)$ are disjoint.

\begin{remark}
All of these hypotheses hold if we take $\rho$ to be the standard Euclidean distance.  
  Given any quasimetric $\rho$, there exists another, equivalent quasimetric $\rho_\#$ such that all $\rho_\#$-balls are open.  This metric also has the property that there exists $\beta>0$ such that $\rho_\#^\beta$ is a metric; this implies that  $\rho_\#$-balls also have the second property.  In practice we can usually replace $\rho$ by $\rho_\#$, and so guarantee these properties.  For a careful discussion of these properties of quasimetrics, see~\cite[Section~2.1]{MR3310009}.  For a discussion of the property that balls can be taken to be far from $\partial \Theta$, see~\cite[Remark~3.6]{MR3095112}. 
\end{remark}

% Finally, we will assume that there is a regular measure $\mu$ on $\Omega$ that is absolutely continuous and that the weight $v \in L^1_\loc(\mu,\Omega)$.    We do not assume that $\mu$ is doubling:  that is, that $\mu(B_\rho(x,2r)\leq D\mu (B_\rho(x,r)$ for every $x\in \Omega$ and $r>0$.  (In other words, we do not assume that $(\Omega,\rho,\mu)$ is a space of homogeneous type.)  However, if $\mu$ is doubling, then this immediately implies that $\rho$ satisfies the geometric doubling property:  see~\cite[Chapter III]{MR0499948}. 

We can now state our local degenerate Poincar\'e inequality:  we will assume that there exists $c\geq 1$ such that, given any compact set $K\subset \Omega$ and $\epsilon>0$, there exists $\delta>0$ so that if $0<r<\delta$ and $x\in K$, then for all $\varphi \in Q\lip_0(v,\Omega)$,  
\begin{multline}\label{eqn:local-poincare-QM}
\bigg(\int_{B_\rho(x,r)} |\varphi(y)-\varphi_{v,B_\rho}|^p \,vdy \bigg)^{\frac{1}{p}} \\
\leq \epsilon \bigg[
\bigg(\int_{B_\rho(x,cr)} |\sqrt{Q}\grad \varphi|^p \,dy\bigg)^{\frac{1}{p}}
+ \bigg(\int_{B_\rho(x,cr)} |\varphi|^p \,vdy\bigg)^{\frac{1}{p}} \bigg]
= \epsilon\|\varphi\|_{QH^{1,p}(v,B_\rho(x,cr))},
\end{multline}
where $\varphi_{v,B_\rho}$ is the weighted average~\eqref{eqn:wtd-avg}.
For a discussion of this Poincar\'e inequality, see~\cite[Section~2]{MR4069608}.

\begin{remark}
    For specific examples of settings where these Sobolev and Poincar\'e inequalities hold, see~\cite{MR4224718,MR643158,MR0805809,DCU-FED-SR} as well as the forthcoming paper~\cite{DCU-FED-SR-2}.  
\end{remark}

\subsection*{Degenerate elliptic equations with lower order terms}
Here we define the  degenerate, second order, linear equation that we will consider, and then give our definition of a solution.  Hereafter, assume that the weight $v$ and the matrix $Q$ are such that $v\in L^1(\Omega)$ and
for almost every $x\in \Omega$,
\begin{equation} \label{eqn:defn:hyp2}
 |Q(x) |_\op \leq kv(x). 
 \end{equation}
Intuitively, one may think of $v$ as the largest eigenvalue of $Q$.

We are interested in second order elliptic equations with lower order terms, which we will write as
\begin{equation} \label{eqn:formal-eqn}
  Lu = -v^{-1}\Div(Q\grad u)  + \bH\cdot \bR u + \bS'\bG u + Fu = f + \bT' \vecg.  
\end{equation}
We will assume that  $f,\, F \in L^2(v,\Omega)$.  For some fixed $N\in \N$, we assume that $\bG,\, \bH,\, \vecg \in L^2(v,\Omega, \R^N)$. We will denote the components of these functions by $g_i,\,G_i,\, H_i$, $1\leq i \leq N$. 
The operators  $\bR,\, \bS,\, \bT$ are $N$-tuples of vector degenerate subunit vector fields.

To define them, recall that a vector field is
an operator of the form  $V(x)=\vecv(x) \cdot \grad$, where $\vecv$ is an $\R^n$-valued function.  We say that $V$ is a degenerate subunit  vector field (with respect to $v$ and $Q$) if
    \begin{equation} \label{eqn:norm-subunit}
        \|Vu\|_{L^2(v,\Omega)} \leq C(V)\|u\|_{QH^{1,2}(v,\Omega)}.
    \end{equation}  
This norm inequality holds if we assume the following stronger pointwise inequality:   for all $\xi \in \R^n$,
\begin{equation} \label{subunicity}
\langle \vecv, \xi \rangle^2
\leq 
v^{-1}|\sqrt{Q} \xi|^2 = v^{-1}\langle Q\xi,\xi \rangle.    
\end{equation}
Inequality~\eqref{subunicity} is adapted from~\cite{MR2204824}, where it is assumed that $v=1$. (See also~\cite{MR1488238,MR0730094}.)  With the most general hypotheses, we need this stronger pointwise condition to ensure that our definition of a weak solution is well-defined;   see Remark~\ref{rem:subunit-defn}.  However, with the stronger assumptions we need to prove existence and uniqueness, it suffices to assume inequality~\eqref{eqn:norm-subunit}.  See
Lemma~\ref{lemma:subunicity-cond} and Remark~\ref{remark:subunit-norm-cond} below.

\medskip

Let
$\bR = (R_1,\ldots,R_N)$, $\bS=(S_1,\ldots S_N)$, and $\bT=(T_1,\ldots,T_N)$  be $N$-tuples of degenerate subunit vector fields.  Define the components of each $R_i$,  $1\leq i \leq N$, by
\[ R_i(x) = \vecr_i(x) \cdot \grad = \sum_{j=1}^n r_{ij}(x)\partial_j.\]
The components of each $S_i$, denoted $s_{ij}$, and $T_i$, denoted $t_{ij}$ are defined similarly.  
For each $i$, define $S_i'$ to be the formal adjoint of $S_i$:
\[ S_i'(x)u = - v^{-1}\Div(\vecs_i uv) = - v^{-1}\Div(s_{i1}(x)uv,\ldots,s_{in}(x)uv). \]
Define the adoints $T_i'$ in the same way.  
Define
\begin{gather*}
\bH \cdot \bR u = \sum_{i=1}^N H_iR_iu, \qquad 
 \bS' \bG u = \sum_{i=1}^N S_i'(G_iu),\\
 \bG \cdot \bS\varphi  = \sum_{i=1}^N G_i S_i \varphi,  \qquad
 \vecg \cdot \bT \varphi = \sum_{i=1}^N g_i   T_i \varphi. 
\end{gather*}

\begin{remark}
    We could, superficially, have a more general equation by letting $\bR,\,\bS,\,\bT$ be tuples of different lengths.  However, we could extend any tuple to a longer one by adding zeros. Therefore, for simplicity of notation we assume that they are all the same length.
\end{remark}

Hereafter, we will assume the following compatibility condition on the coefficients:
\begin{equation} \label{eqn:compatible}
        \int_\Omega (\bG\cdot\bS \varphi +F \varphi)\,\,vdx\geq 0
    \end{equation}
for every nonnegative $\varphi \in Q\lip_0(\Omega)$.  This condition is required to show the uniqueness of the zero solution:  see Section~\ref{section:zero}.

\begin{remark}
If we did not define our operator $L$ with a negative sign in front of the second order term, \eqref{eqn:compatible} would have the inequality reversed.  In this case the condition is sometimes referred to as a negativity condition and corresponds to the assumption on the coefficients  often made in the classical case:  see, for instance~\cite[Section~8.1]{MR1814364} or~\cite[Remark~2.1]{MR2994671}.
\end{remark}

\smallskip

We can now define what is meant by a solution of~\eqref{eqn:formal-eqn}.

\begin{defn} \label{defn:dirichlet}
Given a weight $v$ and a matrix $Q$ and the operator $L$ defined above, we  say that the pair $(u,\grad u) \in QH_0^{1,2}(v, \Omega)$ is a degenerate weak solution to the Dirichlet problem
\begin{equation} \label{eqn:dirichlet} 
\begin{cases}
Lu = f + \bT' \vecg, & x \in \Omega, \\
 u = 0, & x \in \partial \Omega,
 \end{cases}
\end{equation}
if for all $\varphi \in Q\lip_0(v,\Omega)$,
\begin{multline} \label{eqn:weak-soln}
\int_\Omega   Q\grad u \cdot \grad \varphi \, dx
+ \int_\Omega \bH\cdot \bR u \, \varphi \, v dx 
+ \int_\Omega u\, \bG \cdot \bS \varphi\, \,vdx
+ \int_\Omega F u \varphi \,\,vdx  \\
= \int_\Omega  f\varphi \,\,vdx  + \int_\Omega \vecg \cdot \bT\varphi \,\,vdx. 
\end{multline}
\end{defn}

\begin{remark} \label{rem:subunit-defn}
    Implicit in this definition is the assumption that every integral in~\eqref{eqn:weak-soln} is finite.  With the $L^2$ integrability assumptions on the coefficient functions and the norm condition~\eqref{eqn:norm-subunit} on the vector fields, this follows by H\"older's inequality for every integral except the integral of $u\bG \cdot \bS \varphi v$.  To show that this integral is finite, however, we need to assume both the stronger pointwise condition~\eqref{subunicity} and  the hypothesis~\eqref{eqn:defn:hyp2}.    But, as we will show below (see Lemma~\ref{lemma:QH1-test-func}), with the stronger assumptions that we will make on the coefficients, each of these integrals is finite only assuming~\eqref{eqn:norm-subunit}.  Though not needed here,  the assumption~\eqref{eqn:defn:hyp2} is still required to deal with the lower order terms: see the proof of Theorem~\ref{thm:hom-problem}.
\end{remark}

\medskip

\section{Statement of main results}
\label{section:main-results}

To state our main results, we first gather together all of the assumptions that we outlined above and that are implicit in Definition~\ref{defn:dirichlet}.

\begin{hyp} \label{hyp:global-hyp}
    We make the following assumptions:  
    \begin{itemize}
        \item $\Omega \subset \R^n$ is a bounded, connected, open domain. 

        \item  $v$ is a weight with $v\in L^1(\Omega)$, and $Q$ is a matrix function  satisfying inequality~\eqref{eqn:defn:hyp2}.

        \item $f,\, F \in L^2(v,\Omega)$, and $\bG,\, \bH,\, \vecg \in L^2(v,\Omega,\R^N)$ for some $N\in \N$.

        \item $\bR,\, \bS,\, \bT$ are $N$-tuples of degenerate vector fields satisfying~\eqref{eqn:norm-subunit}.

        \item $F,\, \bG, \bS$ satisfy the compatibility condition~\eqref{eqn:compatible}.  

    \end{itemize}
\end{hyp}

We also recall all of our assumptions needed for a Poincar\'e inequality.

\begin{hyp} \label{hyp:poincare}
    Given a quasimetric $\rho$ on a bounded, open set $\Theta$, we will assume that
    \begin{itemize}
    \item The $\rho$-balls are open in the standard topology on $\rn$. 

    \item For each $x\in\Theta$, there exists $\delta(x)>0$ such that if $0<r<\delta(x)$, then $\overline{B_\rho(x,r)} \subset \Theta$. 

    \item The quasimetric $\rho$ satisfies a local, geometric doubling condition:  there exists a function $G : (0,\infty)\rightarrow (0,\infty)$ such that, given any compact set $K\subset \Theta$, there exists $\delta>0$ such that if $0<s<r<\delta$ and $x\in K$, there exist at most $G(r/s)$ points $x_i\in B_\rho(x,r)$ such that the balls $B_\rho(x_i,s)$ are disjoint.
\end{itemize}
\end{hyp}

Our first result assumes the existence of a global Sobolev inequality with gain in the scale of Lebesgue spaces. In this case, we only need to assume the existence of a Poincar\'e inequality on $\Omega$. 

\begin{theorem}\label{thm:main-theorem}
Given  Hypothesis~\ref{hyp:global-hyp}, suppose that the global degenerate Sobolev inequality~\eqref{eqn:global-sobolev} holds with gain $\sigma>1$.  Also suppose the pair $(\Omega, \rho)$  is a quasimetric space that satisfies Hypothesis~\ref{hyp:poincare} and the local  degenerate Poincar\'e inequality~\eqref{eqn:local-poincare-QM}  holds on $\Omega$. If for some $q>2\sigma'$, $\bH,\,\bG \in L^{q}(v,\Omega,\R^N)$  and $F\in L^{\frac{q}{2}}(v,\Omega)$,  then the Dirichlet problem~\eqref{eqn:dirichlet} has a unique degenerate weak solution $ u \in QH_0^{1,2}(v,\Omega) $.
\end{theorem}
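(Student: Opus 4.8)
The plan is to recast the Dirichlet problem as an operator equation on the Hilbert space $\mathcal H := QH_0^{1,2}(v,\Omega)$ and then combine the Lax--Milgram theorem with the Fredholm alternative. To \eqref{eqn:weak-soln} we associate the bilinear form
\[
\mathcal B[u,\varphi] = \int_\Omega Q\grad u\cdot\grad\varphi\,dx + \int_\Omega \bH\cdot\bR u\,\varphi\,vdx + \int_\Omega u\,\bG\cdot\bS\varphi\,vdx + \int_\Omega Fu\varphi\,vdx ,
\]
so that $u\in\mathcal H$ is a degenerate weak solution exactly when $\mathcal B[u,\varphi]=\int_\Omega f\varphi\,vdx+\int_\Omega\vecg\cdot\bT\varphi\,vdx$ for all $\varphi$. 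Two preliminary reductions are needed. First, by the vector-field estimates of Section~\ref{section:prelim-vector}, under the present hypotheses the norm condition \eqref{eqn:norm-subunit} can be upgraded to the gradient bound $\|\bR u\|_{L^2(v,\Omega)}\lesssim\|\sqrt{Q}\grad u\|_{L^2(\Omega)}$, and similarly for $\bS$ and $\bT$; this is what will make the first-order terms absorbable below. Second, the test-function extension result (Lemma~\ref{lemma:QH1-test-func}) lets us test against arbitrary $\varphi\in\mathcal H$, not merely $\varphi\in Q\lip_0(v,\Omega)$; in particular the right-hand side of \eqref{eqn:weak-soln} is then a bounded linear functional $\ell\in\mathcal H^*$ by H\"older's inequality.

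Next I would verify that $\mathcal B$ is bounded on $\mathcal H\times\mathcal H$. The principal term is controlled by Cauchy--Schwarz in $QL^2(\Omega)$. For the remaining terms the key is exponent arithmetic: since $q>2\sigma'$ one has $\tfrac1q+\tfrac1{2\sigma}+\tfrac12\le1$, so H\"older's inequality distributes $\bH\cdot\bR u\,\varphi$ across $L^q(v,\Omega)$, $L^2(v,\Omega)$ (via $\|\bR u\|_{L^2(v)}\lesssim\|u\|_{\mathcal H}$), and $L^{2\sigma}(v,\Omega)$ (via the global Sobolev inequality \eqref{eqn:global-sobolev}, which bounds $\|\varphi\|_{L^{2\sigma}(v)}$ by $\|\sqrt{Q}\grad\varphi\|_{L^2}$); the $\bG\cdot\bS\varphi$ and $Fu\varphi$ terms are handled symmetrically, the last one using $F\in L^{q/2}(v,\Omega)$. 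This is the content of Section~\ref{section:coerce-power-gain}.

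The heart of the matter is an almost-coercivity (G\aa rding) inequality: there exist $c_0>0$ and $C_0\ge0$ such that $\mathcal B[u,u]\ge c_0\|\sqrt{Q}\grad u\|_{L^2(\Omega)}^2-C_0\|u\|_{L^2(v,\Omega)}^2$ for all $u\in\mathcal H$. Using the gradient form of the subunit bound we estimate $\big|\int_\Omega\bH\cdot\bR u\,u\,vdx\big|\le\|\bH\|_{L^q(v)}\|\bR u\|_{L^2(v)}\|u\|_{L^{2q/(q-2)}(v)}$; since $q>2\sigma'$ one checks $2<\tfrac{2q}{q-2}<2\sigma$, so $\|u\|_{L^{2q/(q-2)}(v)}$ interpolates between $L^2(v,\Omega)$ and $L^{2\sigma}(v,\Omega)$, and after applying the Sobolev inequality \eqref{eqn:global-sobolev} and Young's inequality \eqref{eqn:youngs-ineq} this term is at most $\eta\|\sqrt{Q}\grad u\|_{L^2}^2+C_\eta\|u\|_{L^2(v)}^2$ with $\eta$ as small as we like; the $\bG\cdot\bS u$ and $Fu^2$ contributions (the latter using $F\in L^{q/2}(v)$) are estimated the same way. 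Choosing $\eta$ small and absorbing yields the G\aa rding inequality. Together with $\|u\|_{\mathcal H}^2\approx\|u\|_{L^2(v)}^2+\|\sqrt{Q}\grad u\|_{L^2}^2$, this shows that for $\mu>C_0$ the shifted form $\mathcal B_\mu[u,\varphi]:=\mathcal B[u,\varphi]+\mu\int_\Omega u\varphi\,vdx$ is bounded and coercive on $\mathcal H$, so Lax--Milgram provides an isomorphism $T_\mu:\mathcal H\to\mathcal H^*$.

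Finally I would invoke the Fredholm alternative. Because the Sobolev inequality \eqref{eqn:global-sobolev} holds with gain $\sigma>1$ and $v\in L^1(\Omega)$, the embedding $\iota:\mathcal H\hookrightarrow L^2(v,\Omega)$ is compact (a degenerate Rellich--Kondrachov theorem available under the present hypotheses). Writing the weak formulation as $T_\mu u-\mu\,\iota^*\iota\,u=\ell$, the operator $\mu\,T_\mu^{-1}\iota^*\iota$ is compact on $\mathcal H$, so by the Fredholm alternative the equation $\mathcal B[u,\cdot]=\ell$ has a (then unique) solution for every $\ell\in\mathcal H^*$ if and only if $u=0$ is the only solution of the homogeneous problem $\mathcal B[u,\cdot]=0$. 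That last statement is exactly what is proved in Section~\ref{section:zero}: testing the homogeneous equation against truncations of $u$ (Lemma~\ref{lemma:truncate} and \eqref{eqn:defn:hyp2}), using the compatibility condition \eqref{eqn:compatible} to discard the lower-order contributions, and summing the local Poincar\'e inequality \eqref{eqn:local-poincare-QM} over a finite cover of $\Omega$ by small $\rho$-balls (this is where Hypothesis~\ref{hyp:poincare} is used) to bound $\|u\|_{L^2(v)}$ by $\|\sqrt{Q}\grad u\|_{L^2}$, forces $\sqrt{Q}\grad u=0$ and hence $u$ constant; since $u\in QH_0^{1,2}(v,\Omega)$, $u\equiv0$. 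The delicate points are the G\aa rding estimate for the first- and zeroth-order terms (where the strict inequality $q>2\sigma'$ is essential) and the compactness of $\iota$; the rest is bookkeeping with H\"older's inequality and the cited lemmas.
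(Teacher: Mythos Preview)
Your overall architecture---boundedness and a G\aa rding inequality for the bilinear form, Lax--Milgram on the shifted form, Fredholm alternative via compactness of the embedding into $L^2(v,\Omega)$, and reduction to uniqueness for the homogeneous problem---matches the paper's proof exactly. However, you have swapped the roles of the Sobolev and Poincar\'e hypotheses in the last two steps, and this matters.

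The compact embedding $\mathcal H\hookrightarrow L^2(v,\Omega)$ does \emph{not} follow from the Sobolev gain and $v\in L^1(\Omega)$ alone in this degenerate setting. In the paper (Lemma~\ref{lemma:compact-gain}) the Sobolev gain is used only to control the tail on $\Omega\setminus K$ for a large compact $K$; on $K$ itself one needs the local Poincar\'e inequality~\eqref{eqn:local-poincare-QM}, summed over a finite $\rho$-ball cover of $K$ (this is precisely where Hypothesis~\ref{hyp:poincare} and the geometric doubling enter), together with weak convergence of the averages, to upgrade weak to strong $L^2(v)$ convergence. Conversely, the zero-solution uniqueness in Section~\ref{section:zero} does \emph{not} use the Poincar\'e inequality at all. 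After testing with $w=(u-k)_+$ and invoking the compatibility condition~\eqref{eqn:compatible} via the product rule, one obtains
\[
\|\sqrt{Q}\grad w\|_{L^2(\Gamma)}^2 \le C\,v(\Gamma)^{\frac{1}{2\sigma'}-\frac1q}\,\|\sqrt{Q}\grad w\|_{L^2(\Gamma)}^2,
\]
by H\"older and the Sobolev gain on $\Gamma=\supp w$; the strict inequality $q>2\sigma'$ makes the exponent on $v(\Gamma)$ positive, so $v(\Gamma)$ is bounded below by a constant independent of $k$, which contradicts $v(\Gamma)\to0$ as $k\to\sup_\Omega u$. Finally, note that the paper first shows $u=0$ in $L^2(v,\Omega)$ and only \emph{then} deduces $\grad u=0$ in $QL^2(\Omega)$ by testing the weak formulation with $\vecu=(0,\grad u)$ itself---not the other way around. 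In this degenerate framework $\grad u$ need not be the weak derivative of $u$, so the implication ``$\sqrt{Q}\grad u=0\Rightarrow u$ constant'' that you invoke is unavailable.
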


Our second result assumes the existence of a global degenerate Sobolev inequality on $\Omega$ in the scale of Orlicz spaces.  

\begin{theorem}\label{thm:log-gain}
Given  Hypothesis~\ref{hyp:global-hyp}, suppose that the global degenerate Sobolev inequality~\eqref{eqn:global-sobolev-orlicz} holds with $A(t)=t^2\log(e+t)^\sigma$, $\sigma>0$.  Also suppose the pair $(\Omega,\rho)$ is a quasimetric space  that satisfies Hypothesis~\ref{hyp:poincare} and the local  Poincar\'e inequality~\eqref{eqn:local-poincare-QM}  holds on $\Omega$. 
If for some $0<\lambda<1$, 
$\bH,\,\bG \in L^{C}(v,\Omega,\R^N)$, and $F\in L^{D}(v,\Omega)$, where $C(t)=\exp{(t^\frac{2}{\lambda\sigma})}-1$ and $D(t)=\exp{(t^\frac{1}{\lambda\sigma})}-1$, then the Dirichlet problem~\eqref{eqn:dirichlet} has a unique degenerate weak solution $ u \in QH_0^{1,2}(v,\Omega) $.
\end{theorem}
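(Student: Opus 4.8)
The plan is to recast the Dirichlet problem~\eqref{eqn:dirichlet} as the study of the bilinear form
\[
\mathcal{B}(u,\varphi) = \int_\Omega Q\grad u\cdot\grad\varphi\,dx + \int_\Omega \bH\cdot\bR u\,\varphi\,vdx + \int_\Omega u\,\bG\cdot\bS\varphi\,vdx + \int_\Omega Fu\varphi\,vdx
\]
on the Hilbert space $QH_0^{1,2}(v,\Omega)$, together with the functional $\ell(\varphi)=\int_\Omega f\varphi\,vdx+\int_\Omega\vecg\cdot\bT\varphi\,vdx$, and then to solve $\mathcal{B}(u,\varphi)=\ell(\varphi)$ by combining the Lax--Milgram theorem with the Fredholm alternative, exactly as in the classical theory but carrying out every estimate in the Orlicz spaces attached to $A$, $C$, and $D$. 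First I would use Lemma~\ref{lemma:QH1-test-func} to enlarge the class of admissible test functions from $Q\lip_0(v,\Omega)$ to all of $QH_0^{1,2}(v,\Omega)$; then, using only $f,\vecg\in L^2(v,\Omega)$ and the norm condition~\eqref{eqn:norm-subunit}, one checks that $\ell$ is bounded on $QH_0^{1,2}(v,\Omega)$.

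Next comes boundedness of $\mathcal{B}$, which is where the hypotheses on $\bH,\bG,F$ enter. The principal term is controlled by Cauchy--Schwarz, and each remaining term is estimated by the generalized Orlicz H\"older inequality~\eqref{eqn:gen-orlicz-holder} applied to a product of three functions. For the first order term: since $R_i$ is subunit, $\|R_iu\|_{L^2(v,\Omega)}\lesssim\|u\|_{QH^{1,2}(v,\Omega)}$; by the Orlicz Sobolev inequality~\eqref{eqn:global-sobolev-orlicz}, $\|\varphi\|_{L^A(v,\Omega)}\lesssim\|\varphi\|_{QH^{1,2}(v,\Omega)}$ with $A^{-1}(t)\approx t^{1/2}\log(e+t)^{-\sigma/2}$; and $C^{-1}(t)\approx\log(e+t)^{\lambda\sigma/2}$, so $C^{-1}(t)\,t^{1/2}\,A^{-1}(t)\approx t\log(e+t)^{-\sigma(1-\lambda)/2}\lesssim t$ precisely because $\lambda<1$. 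This gives $\bigl|\int_\Omega H_iR_iu\,\varphi\,vdx\bigr|\lesssim\|\bH\|_{L^C(v,\Omega)}\|u\|_{QH^{1,2}(v,\Omega)}\|\varphi\|_{QH^{1,2}(v,\Omega)}$; the $\bG$ term is symmetric (with $u$ and $\varphi$ interchanged, $\bS\varphi$ subunit), and using~\eqref{orlicz-inverse} the identity $D^{-1}(t)\approx\log(e+t)^{\lambda\sigma}$ yields $D^{-1}(t)A^{-1}(t)^2\lesssim t$, whence $\bigl|\int_\Omega Fu\varphi\,vdx\bigr|\lesssim\|F\|_{L^D(v,\Omega)}\|u\|_{QH^{1,2}(v,\Omega)}\|\varphi\|_{QH^{1,2}(v,\Omega)}$. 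Hence $\mathcal{B}$ is bounded.

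The technical core is the almost-coercivity estimate $\mathcal{B}(u,u)\ge c\|u\|_{QH^{1,2}(v,\Omega)}^2-C\|u\|_{L^2(v,\Omega)}^2$. Taking $\varphi=u$, the principal term contributes $\|\grad u\|_{QL^2(\Omega)}^2$. Applying the compatibility condition~\eqref{eqn:compatible} to the nonnegative function $u^2$---legitimate after approximating $u$ in $QH_0^{1,2}(v,\Omega)$ by $\varphi_k\in Q\lip_0(\Omega)$, testing with $\varphi_k^2$, and passing to the limit with the help of Lemma~\ref{lemma:truncate} and the product rule $\bS(u^2)=2u\bS u$ available under~\eqref{eqn:defn:hyp2}---gives $\int_\Omega u\,\bG\cdot\bS u\,vdx+\int_\Omega Fu^2\,vdx\ge\tfrac12\int_\Omega Fu^2\,vdx$. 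It then remains to absorb $\bigl|\int_\Omega\bH\cdot\bR u\,u\,vdx\bigr|$ and $\tfrac12\bigl|\int_\Omega Fu^2\,vdx\bigr|$ into a small multiple of $\|\grad u\|_{QL^2(\Omega)}^2$ plus a large multiple of $\|u\|_{L^2(v,\Omega)}^2$. By Orlicz H\"older, $\bigl|\int_\Omega H_iR_iu\,u\,vdx\bigr|\lesssim\|\bH\|_{L^C(v,\Omega)}\|R_iu\|_{L^2(v,\Omega)}\|u\|_{L^\Upsilon(v,\Omega)}\lesssim\|\bH\|_{L^C(v,\Omega)}\|u\|_{QH^{1,2}(v,\Omega)}\|u\|_{L^\Upsilon(v,\Omega)}$ with $\Upsilon(t)=t^2\log(e+t)^{\lambda\sigma}$, and similarly $\int_\Omega|F|u^2\,vdx\lesssim\|F\|_{L^D(v,\Omega)}\|u\|_{L^\Upsilon(v,\Omega)}^2$ since $\overline{D}(t)\approx t\log(e+t)^{\lambda\sigma}$. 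Because $\lambda\sigma<\sigma$, the Orlicz space $L^\Upsilon(v,\Omega)$ lies strictly between $L^2(v,\Omega)$ and the Sobolev target $L^A(v,\Omega)$, so Lemma~\ref{ScottInterp} applied with the pair $(t^2,A)$ and exponents $\lambda_1+\lambda_2=1$, $\lambda\le\lambda_2<1$ (its hypothesis~\eqref{power-holder-condition} reducing exactly to $\lambda_2\ge\lambda$), combined with~\eqref{eqn:global-sobolev-orlicz}, gives $\|u\|_{L^\Upsilon(v,\Omega)}\lesssim\|u\|_{L^2(v,\Omega)}^{\lambda_1}\|\grad u\|_{QL^2(\Omega)}^{\lambda_2}$. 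Feeding this back and applying Young's inequality~\eqref{eqn:youngs-ineq}---noting that in every resulting monomial the power of $\|\grad u\|_{QL^2(\Omega)}$ is strictly less than $2$---makes the coefficient of $\|\grad u\|_{QL^2(\Omega)}^2$ as small as we please; summing over the $N$ indices and choosing the small parameters suitably yields $\mathcal{B}(u,u)\ge\tfrac12\|\grad u\|_{QL^2(\Omega)}^2-C\|u\|_{L^2(v,\Omega)}^2$, which is equivalent to the claimed estimate. It is precisely here that $\lambda<1$ is indispensable: it provides the gap $\lambda\sigma<\sigma$ between the Orlicz space in which the coefficients force us to measure $u$ and the one supplied by the Sobolev inequality.

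To finish, the local Poincar\'e inequality~\eqref{eqn:local-poincare-QM} together with Hypothesis~\ref{hyp:poincare} implies, by a standard Rellich-type covering argument, that the embedding $\iota:QH_0^{1,2}(v,\Omega)\hookrightarrow L^2(v,\Omega)$ is compact. Fix $\mu>0$ so large that $\mathcal{B}_\mu:=\mathcal{B}+\mu\langle\cdot,\cdot\rangle_{L^2(v,\Omega)}$ is coercive (it remains bounded); by Lax--Milgram it has a bounded inverse $L_\mu$, and $\mathcal{B}(u,\varphi)=\ell(\varphi)$ becomes $(I-K)u=L_\mu\ell$, where $K$ is the composition of $\mu L_\mu$ with $\iota$ and its adjoint and is therefore compact. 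By the Fredholm alternative, $(I-K)u=L_\mu\ell$ is uniquely solvable for every $\ell$ once $\ker(I-K)=\{0\}$, i.e.\ once $u=0$ is the only solution of the homogeneous problem $\mathcal{B}(u,\cdot)\equiv0$; this is exactly the zero-solution theorem proved in Section~\ref{section:zero} (whose proof rests on the compatibility condition, Lemma~\ref{lemma:truncate}, and a De Giorgi-type iteration built on the Poincar\'e inequality). I expect the almost-coercivity estimate---in particular the identification of the intermediate Orlicz space $L^\Upsilon(v,\Omega)$ and the interpolation step through Lemma~\ref{ScottInterp} that exploits $\lambda\sigma<\sigma$---to be the main obstacle; the boundedness of $\mathcal{B}$ and the functional-analytic conclusion are routine once the Orlicz machinery of Section~\ref{section:prelim} is in hand.
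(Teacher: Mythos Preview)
Your proposal is correct and follows the same overall architecture as the paper: boundedness and almost-coercivity of the bilinear form (the paper's Lemmas~\ref{orlicz-gain-boundedness} and~\ref{orlicz-gain-almost-coercivity}), compactness of the projection $QH_0^{1,2}(v,\Omega)\to L^2(v,\Omega)$ (Lemma~\ref{lemma:compact-gain}), then Lax--Milgram plus the Fredholm alternative, with the kernel identified via Theorem~\ref{thm:hom-problem}.

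Two small discrepancies are worth noting. First, in the almost-coercivity step you invoke the compatibility condition on $u^2$ to eliminate the $\bG$-term before estimating; the paper does not do this and simply bounds all three lower-order terms in absolute value, treating the $\bH$ and $\bG$ terms by identical estimates. Your route is valid (it is precisely Lemma~\ref{lemma:QH1-compatibility} with $w=u$), but it buys nothing, since you must still control $\tfrac12\int |F|u^2\,vdx$ by the same interpolation machinery. Second, your description of the zero-solution theorem is inaccurate: the proof in Section~\ref{section:zero} uses neither the Poincar\'e inequality nor any De Giorgi iteration; it relies only on the Orlicz Sobolev inequality, the truncation Lemma~\ref{lemma:truncate}, and a contradiction on the $v$-measure of $\supp((u-k)_+)$ as $k\to\sup_\Omega u$. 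Since you merely cite the result this does not affect your argument, but be aware that the Poincar\'e inequality enters the proof of Theorem~\ref{thm:log-gain} only through the compactness lemma, where it is paired with the Orlicz Sobolev gain (needed to control the tail on $\Omega\setminus K$); your sketch of the Rellich argument should acknowledge both ingredients.
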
 

Our next two results only assume the existence of a global degenerate Sobolev inequality without gain.  We can prove a version of the previous two theorems but only with stronger hypotheses.  First we need a size restriction on $\bG$ and $\bH$; in other words, we have a perturbation result.  Second, we also need a local degenerate Poincar\'e inequality on a larger domain $\Theta$. 

\begin{theorem}\label{thm:no-gain}
Given  Hypothesis~\ref{hyp:global-hyp}, suppose that the global degenerate Sobolev inequality~\eqref{eqn:global-sobolev} holds without gain (i.e., $\sigma=1$). Also suppose the pair $(\Theta,\rho)$ is a quasimetric space  that satisfies Hypothesis~\ref{hyp:poincare} and the local  Poincar\'e inequality~\eqref{eqn:local-poincare-QM}  holds on $\Theta$. 
If $F\in L^{\infty}(v,\Omega)$ and $\bH,\,\bG \in L^{\infty}(v,\Omega,\R^N)$, and if 
\[ \max(C(\bR),C(\bS))S(2,1))\big( \|\bG\|_{L^\infty(v,\Omega)} + \|\bH\|_{L^\infty(v,\Omega)}\big)<1, \]
then the Dirichlet problem~\eqref{eqn:dirichlet} has a unique solution $ u \in QH_0^{1,2}(v,\Omega) $.  
\end{theorem}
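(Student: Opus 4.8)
The plan is to apply the Lax–Milgram theorem together with the Fredholm alternative to the bilinear form naturally associated with the weak formulation~\eqref{eqn:weak-soln}. First I would introduce, on the Hilbert space $QH_0^{1,2}(v,\Omega)$, the bilinear form
\[
\mathcal{B}(u,\varphi) = \int_\Omega Q\grad u\cdot\grad\varphi\,dx + \int_\Omega \bH\cdot\bR u\,\varphi\,vdx + \int_\Omega u\,\bG\cdot\bS\varphi\,vdx + \int_\Omega Fu\varphi\,vdx,
\]
and the functional $\Lambda(\varphi) = \int_\Omega f\varphi\,vdx + \int_\Omega \vecg\cdot\bT\varphi\,vdx$. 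By Lemma~\ref{lemma:QH1-test-func} (and Remark~\ref{rem:subunit-defn}) every term here extends continuously to $\varphi\in QH_0^{1,2}(v,\Omega)$, so $u$ is a degenerate weak solution exactly when $\mathcal{B}(u,\varphi)=\Lambda(\varphi)$ for all $\varphi$ in this space.

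The main work is to verify \textbf{boundedness} and \textbf{(almost) coercivity} of $\mathcal{B}$. Boundedness follows from Hölder's inequality in $L^2(v,\Omega)$, the $L^\infty$ bounds on $F,\bH,\bG$, and the norm-subunit bound~\eqref{eqn:norm-subunit} on $\bR,\bS$; this is straightforward and presumably the content of Sections~\ref{section:coerce-power-gain}--\ref{section:coerce-weaker-gain} applied in the no-gain case. For coercivity, I would estimate, for $u\in QH_0^{1,2}(v,\Omega)$,
\[
\mathcal{B}(u,u) \geq \|u\|_{QL^2(\Omega)}^2 - \|\bH\|_{L^\infty(v,\Omega)}\|\bR u\|_{L^2(v,\Omega)}\|u\|_{L^2(v,\Omega)} - \|\bG\|_{L^\infty(v,\Omega)}\|\bS u\|_{L^2(v,\Omega)}\|u\|_{L^2(v,\Omega)} + \int_\Omega F u^2\,vdx.
\]
Here the compatibility condition~\eqref{eqn:compatible}, applied with $\varphi=u^2$ via the truncation/approximation machinery of Lemma~\ref{lemma:truncate}, should be used to control the sign of the $F$-term together with part of the $\bG\cdot\bS u$-term; the remaining first-order contributions are bounded by $\max(C(\bR),C(\bS))(\|\bG\|_\infty+\|\bH\|_\infty)\|u\|_{QH^{1,2}}\|u\|_{L^2(v,\Omega)}$ using~\eqref{eqn:norm-subunit}. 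Now I invoke the global Sobolev inequality \emph{without gain}, $\|u\|_{L^2(v,\Omega)}\leq S(2,1)\|u\|_{QL^2(\Omega)}$, which converts $\|u\|_{L^2(v,\Omega)}$ into $S(2,1)\|u\|_{QL^2(\Omega)}$ and lets me write everything in terms of $\|u\|_{QL^2(\Omega)}^2$. The smallness hypothesis
\[
\max(C(\bR),C(\bS))\,S(2,1)\big(\|\bG\|_{L^\infty(v,\Omega)}+\|\bH\|_{L^\infty(v,\Omega)}\big)<1
\]
is precisely what is needed to absorb the bad terms into $\|u\|_{QL^2(\Omega)}^2$ and obtain $\mathcal{B}(u,u)\geq c\|u\|_{QL^2(\Omega)}^2$ for some $c>0$; combined once more with the Sobolev inequality this gives genuine coercivity $\mathcal{B}(u,u)\geq c'\|u\|_{QH^{1,2}(v,\Omega)}^2$.

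With boundedness and coercivity in hand, the Lax–Milgram theorem yields a unique $u\in QH_0^{1,2}(v,\Omega)$ with $\mathcal{B}(u,\varphi)=\Lambda(\varphi)$ for all $\varphi$, which is the desired existence and uniqueness (no Fredholm alternative is even needed here, since we have true coercivity rather than a Gårding-type inequality). Uniqueness of the solution to the homogeneous problem is then immediate, but it also follows independently from Theorem~\ref{thm:hom-problem} / Section~\ref{section:zero}, which relies on~\eqref{eqn:defn:hyp2} and the compatibility condition. I expect the main obstacle to be the coercivity estimate: carefully bookkeeping the interaction between the compatibility condition~\eqref{eqn:compatible} and the first-order terms, justifying the use of $u^2$-type test functions via the truncation lemma and a density argument in $QH_0^{1,2}(v,\Omega)$, and tracking constants so that the smallness hypothesis is exactly what closes the argument. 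Everything else (boundedness, the functional-analytic conclusion) is routine given the framework already set up.
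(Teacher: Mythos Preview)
Your approach is correct but takes a genuinely different route from the paper. The paper does \emph{not} establish true coercivity of $\mathcal{L}$ in the no-gain case. Instead, it proves only almost coercivity (Lemma~\ref{no-gain-almost-coercivity}, which uses neither the smallness nor the compatibility condition), then adds $\beta\int uh\,vdx$ to obtain a coercive form, applies Lax--Milgram to that, and invokes the Fredholm alternative. Compactness of the perturbation comes from Lemma~\ref{lemma:compact-no-gain}, which is where the Poincar\'e inequality on the enlarged domain $\Theta$ enters; triviality of the kernel comes from Theorem~\ref{thm:hom-problem}(3), and \emph{that} is where the smallness hypothesis is actually used in the paper.

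Your argument short-circuits this: using Lemma~\ref{lemma:QH1-compatibility} with $w=u$ (so $\bS(u^2)=2u\,\bS u$ by Lemma~\ref{product-rule-all-sobolevs}(3)) together with the $L^\infty$ bounds, the norm-subunit inequality~\eqref{eqn:subunit-norm-cond}, and the Sobolev inequality without gain, you get $\mathcal{L}(u,u)\geq c\|\nabla u\|_{QL^2}^2$ directly---essentially the same computation the paper carries out in the proof of Theorem~\ref{thm:hom-problem}(3), but read as a coercivity bound rather than a contradiction. Lax--Milgram then finishes without Fredholm. A pleasant consequence is that your proof never uses the Poincar\'e hypothesis on $\Theta$; the paper's route needs it only to feed the compact embedding into the Fredholm machinery, so your argument shows this hypothesis is in fact dispensable for Theorem~\ref{thm:no-gain}. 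The trade-off is modularity: the paper's Fredholm framework treats Theorems~\ref{thm:main-theorem}--\ref{thm:no-gain} uniformly, and in the gain cases (where no smallness is assumed) true coercivity is unavailable, so there your shortcut would not apply. One small caution: when you say the smallness hypothesis is ``precisely'' what closes the estimate, track the conversion $\|u\|_{QH^{1,2}}\leq (1+S(2,1))\|\nabla u\|_{QL^2}$ carefully---an extra factor of $(1+S(2,1))$ appears, the same one implicit in the paper's own estimate leading to~\eqref{eq:3.5}.
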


If we assume that our equation has no first order terms, then we do not have to assume the existence of a local degenerate Poincar\'e inequality, and we can weaken the assumptions in Hypothesis~\ref{hyp:global-hyp}.  

\begin{theorem} \label{thm:no-gain-no-first-order}
Given  a bounded domain $\Omega$ a weight $v$, and a matrix $Q$ suppose that the global degenerate Sobolev inequality~\eqref{eqn:global-sobolev} holds without gain (i.e., $\sigma=1$). If \hspace{0.02cm} $\bH=\bG=0$,  $F\in L^{\infty}(v,\Omega)$, and $F$ is nonnegative, then the Dirichlet problem~\eqref{eqn:dirichlet} has a unique degenerate weak solution $ u \in QH_0^{1,2}(v,\Omega) $.
\end{theorem}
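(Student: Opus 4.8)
The plan is to treat this as the prototype Lax--Milgram situation. With $\bH=\bG=0$ there are no first order terms to estimate, the compatibility condition~\eqref{eqn:compatible} collapses to exactly the stated nonnegativity of $F$, and the Sobolev inequality without gain supplies coercivity directly; so no perturbation smallness hypothesis, no Poincar\'e inequality, and no Fredholm alternative are needed, which is what separates this theorem from Theorems~\ref{thm:main-theorem}--\ref{thm:no-gain}. I would first note that the weakened hypotheses (in particular, we do not assume $v\in L^1(\Omega)$ or~\eqref{eqn:defn:hyp2}) still suffice for Definition~\ref{defn:dirichlet} to make sense here: among the integrals in~\eqref{eqn:weak-soln}, the only one whose finiteness forced us to assume~\eqref{subunicity} and~\eqref{eqn:defn:hyp2} was $\int_\Omega u\,\bG\cdot\bS\varphi\,vdx$ (see Remark~\ref{rem:subunit-defn}), and with $\bG=0$ this term vanishes.

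Next I would set up the problem on the Hilbert space $\mathcal{H}=QH_0^{1,2}(v,\Omega)$, whose elements are pairs $(u,\grad u)$. Define the bilinear form
\[ \mathcal{B}\big[(u,\grad u),(w,\grad w)\big] = \int_\Omega Q\grad u\cdot\grad w\,dx + \int_\Omega Fuw\,vdx, \]
and, for $w\in Q\lip_0(v,\Omega)$, the functional
\[ \ell(w) = \int_\Omega fw\,vdx + \int_\Omega \vecg\cdot\bT w\,vdx. \]
With $\bH=\bG=0$, identity~\eqref{eqn:weak-soln} reads precisely $\mathcal{B}[u,\varphi]=\ell(\varphi)$ for all $\varphi\in Q\lip_0(v,\Omega)$. (Note $\mathcal{B}$ is actually symmetric, so the Riesz representation theorem would also apply, but Lax--Milgram is cleaner and uniform with the other proofs.)

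Then I would verify the Lax--Milgram hypotheses. For boundedness of $\mathcal{B}$, Cauchy--Schwarz in $QL^2(\Omega)$ bounds the principal term by $\|\grad u\|_{QL^2(\Omega)}\|\grad w\|_{QL^2(\Omega)}$, and since $F\in L^\infty(v,\Omega)$, Cauchy--Schwarz in $L^2(v,\Omega)$ bounds the zeroth order term by $\|F\|_{L^\infty(v,\Omega)}\|u\|_{L^2(v,\Omega)}\|w\|_{L^2(v,\Omega)}$, so $|\mathcal{B}[u,w]|\lesssim\|u\|_{QH^{1,2}(v,\Omega)}\|w\|_{QH^{1,2}(v,\Omega)}$. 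For coercivity, since $F\ge 0$ we get $\mathcal{B}[(u,\grad u),(u,\grad u)]\ge\|\grad u\|_{QL^2(\Omega)}^2$, and the Sobolev inequality without gain~\eqref{eqn:global-sobolev} with $\sigma=1$, valid on all of $\mathcal{H}$ by the approximation remark following it, gives $\|u\|_{L^2(v,\Omega)}\le S(2,1)\|\grad u\|_{QL^2(\Omega)}$, hence
\[ \mathcal{B}[u,u]\ge\|\grad u\|_{QL^2(\Omega)}^2\ge\big(1+S(2,1)^2\big)^{-1}\|u\|_{QH^{1,2}(v,\Omega)}^2. \]
For boundedness of $\ell$, $\big|\int_\Omega fw\,vdx\big|\le\|f\|_{L^2(v,\Omega)}\|w\|_{L^2(v,\Omega)}$, while the norm condition~\eqref{eqn:norm-subunit} applied to each $T_i$ gives $\big|\int_\Omega\vecg\cdot\bT w\,vdx\big|\le\sum_i\|g_i\|_{L^2(v,\Omega)}\|T_iw\|_{L^2(v,\Omega)}\lesssim\|\vecg\|_{L^2(v,\Omega)}\|w\|_{QH^{1,2}(v,\Omega)}$, so $\ell$ is bounded on the dense subspace $Q\lip_0(v,\Omega)$ and extends uniquely to an element of $\mathcal{H}^*$.

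Finally, Lax--Milgram produces a unique $u\in\mathcal{H}$ with $\mathcal{B}[u,w]=\ell(w)$ for every $w\in\mathcal{H}$; restricting to $w=\varphi\in Q\lip_0(v,\Omega)\subset\mathcal{H}$ recovers Definition~\ref{defn:dirichlet}, and uniqueness in $\mathcal{H}$ is exactly uniqueness of the degenerate weak solution. I do not expect a genuine obstacle here; the only points needing care are bookkeeping ones — phrasing every estimate in terms of the abstract pair $(u,\grad u)$ and the $\mathcal{H}$-norm (since $\grad u$ need not be a weak derivative), and using density of $Q\lip_0(v,\Omega)$ in $\mathcal{H}$ to pass between the Lipschitz test functions appearing in~\eqref{eqn:weak-soln} and general elements of $\mathcal{H}$.
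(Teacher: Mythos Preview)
Your proposal is correct and matches the paper's own proof essentially verbatim: with $\bH=\bG=0$ and $F\ge 0$, the bilinear form is bounded and genuinely coercive (not just almost coercive), so Lax--Milgram applies directly without any Fredholm argument or Poincar\'e inequality. Your remarks on why the weakened structural assumptions still suffice for the weak formulation, and on handling the abstract pair $(u,\grad u)$, are exactly the bookkeeping points the paper alludes to.
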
 

\medskip

As a corollary to our main theorems above, we can prove the existence of solutions to the Dirichlet problem with nonzero boundary data.  To state this result we define boundary equality in a weak sense, and then define the equation we are interested in and its degenerate weak solutions.

\begin{defn} 
Given $f,h\in QH^{1,2}(v,\Omega)$, we say that $f=h$ on the boundary $\partial\Omega$ if and only if $(f-h)\in QH^{1,2}_0(v,\Omega)$.  
\end{defn}

\begin{remark}
Note that with this definition, we do not assume, {\em a priori}, that $f$ and $h$ are actually defined outside of $\Omega$; the problem of boundary values and traces for degenerate elliptic operators is quite difficult and we will not consider it here.    
\end{remark}

\begin{defn}
Given  $\varphi\in QH^{1,2}(v,\Omega)$, we say that $u\in QH^{1,2}(v,\Omega)$ is a degenerate weak solution of the Dirichlet problem with boundary data $\varphi$, 
\begin{equation} \label{inhomogeneous} 
\begin{cases}
Lu = f + \bT' \vecg, & x \in \Omega, \\
 u = \varphi, & x \in \partial \Omega,
 \end{cases}
\end{equation}
if $w = u-\varphi$ is a degenerate weak solution of the  Dirichlet problem 
\begin{equation} \label{inhomogeneou2} 
\begin{cases}
Lw = f + \bT' \vecg - L\varphi, & x \in \Omega, \\
 w = 0, & x \in \partial \Omega,
 \end{cases}
\end{equation}
\end{defn}

As an immediate consequence of the results above, we have the following corollary.

\begin{cor} \label{cor:boundary-data}
    Suppose that the hypotheses of Theorem~\ref{thm:main-theorem}, Theorem~\ref{thm:log-gain}, Theorem~\ref{thm:no-gain-no-first-order}, or Theorem~\ref{thm:no-gain} hold.  Then in every case the Dirichlet problem~\eqref{inhomogeneous} has a unique solution  $u \in QH^{1,2}(v,\Omega) $.  
\end{cor}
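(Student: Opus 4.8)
The plan is to reduce the inhomogeneous problem to the homogeneous one and then invoke the appropriate one of Theorems~\ref{thm:main-theorem}, \ref{thm:log-gain}, \ref{thm:no-gain}, \ref{thm:no-gain-no-first-order}. By definition, $u\in QH^{1,2}(v,\Omega)$ is a degenerate weak solution of~\eqref{inhomogeneous} exactly when $w:=u-\varphi\in QH_0^{1,2}(v,\Omega)$ is a degenerate weak solution of~\eqref{inhomogeneou2}, and $u\mapsto u-\varphi$ is an affine bijection of $\varphi+QH_0^{1,2}(v,\Omega)$ onto $QH_0^{1,2}(v,\Omega)$ carrying solutions to solutions; hence existence and uniqueness for~\eqref{inhomogeneous} are equivalent to existence and uniqueness for~\eqref{inhomogeneou2}. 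Now~\eqref{inhomogeneou2} is a Dirichlet problem of exactly the form~\eqref{eqn:dirichlet} for the \emph{same} operator $L$: the Sobolev and Poincar\'e hypotheses, the integrability assumptions on $\bH$, $\bG$, $F$, the subuniticity of $\bR$, $\bS$, $\bT$, the smallness condition needed in the no-gain case, and the compatibility condition~\eqref{eqn:compatible} all concern $L$ only, and so are inherited verbatim. Only the data changes, from $f+\bT'\vecg$ to $f+\bT'\vecg-L\varphi$, so it suffices to rewrite the latter in the admissible form $\tilde f+\tilde\bT'\tilde\vecg$ with $\tilde f\in L^2(v,\Omega)$, $\tilde\vecg\in L^2(v,\Omega,\R^{\tilde N})$, and $\tilde\bT$ an $\tilde N$-tuple of degenerate subunit vector fields, and then quote the relevant theorem to produce the unique $w$.

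Unwinding $L\varphi=-v^{-1}\Div(Q\grad\varphi)+\bH\cdot\bR\varphi+\bS'\bG\varphi+F\varphi$, I would treat the four terms separately. The term $F\varphi$ and the non-divergence first order term $\bH\cdot\bR\varphi$ go into $\tilde f:=f-F\varphi-\bH\cdot\bR\varphi$; since $\bR$ is subunit and $\varphi\in QH^{1,2}(v,\Omega)$ we have $\bR\varphi\in L^2(v,\Omega)$, so $\tilde f\in L^2(v,\Omega)$ provided $F\varphi,\bH\cdot\bR\varphi\in L^2(v,\Omega)$. The divergence term $\bS'\bG\varphi=\sum_i S_i'(G_i\varphi)$ is already in adjoint-of-subunit form: it is $\bS'$ applied to the data $\bG\varphi$, so it contributes the already-subunit tuple $\bS$ paired with the datum $\bG\varphi\in L^2(v,\Omega,\R^N)$ (up to sign). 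For the principal term, diagonalize $Q=U^tDU$ with $U$ orthogonal and measurable and eigenvalues $\lambda_1,\dots,\lambda_n$ measurable, let $\{\vecv_i\}$ be the associated measurable orthonormal eigenbasis, and write $\grad\varphi=\sum_i a_i\vecv_i$ with $a_i=\langle\grad\varphi,\vecv_i\rangle$. Then
\[
Q\grad\varphi=\sum_{i=1}^n\lambda_ia_i\vecv_i=\sum_{i=1}^n\big(v^{-\frac12}\lambda_i^{\frac12}\vecv_i\big)\big(v^{-\frac12}\lambda_i^{\frac12}a_i\big)\,v,
\]
so setting $\tilde T_i:=\big(v^{-\frac12}\lambda_i^{\frac12}\vecv_i\big)\cdot\grad$ and $\tilde g_i:=v^{-\frac12}\lambda_i^{\frac12}a_i$ gives $-v^{-1}\Div(Q\grad\varphi)=\sum_{i=1}^n\tilde T_i'(\tilde g_i)$. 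Each $\tilde T_i$ is degenerate subunit, since for $\xi\in\R^n$
\[
\big\langle v^{-\frac12}\lambda_i^{\frac12}\vecv_i,\xi\big\rangle^2=v^{-1}\lambda_i\langle\vecv_i,\xi\rangle^2\le v^{-1}\sum_{j=1}^n\lambda_j\langle\vecv_j,\xi\rangle^2=v^{-1}\langle Q\xi,\xi\rangle,
\]
and $\tilde g_i\in L^2(v,\Omega)$ because $\int_\Omega|\tilde g_i|^2\,vdx=\int_\Omega\lambda_i|a_i|^2\,dx\le\int_\Omega|\sqrt Q\grad\varphi|^2\,dx<\infty$. Collecting the pieces (keeping track of signs), the new data has the admissible form $\tilde f+\tilde\bT'\tilde\vecg$ with $\tilde\bT=(\bT,\bS,\tilde T_1,\dots,\tilde T_n)$ and $\tilde\vecg=(\vecg,-\bG\varphi,-\tilde g_1,\dots,-\tilde g_n)$.

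With the data in admissible form and every structural hypothesis inherited from $L$, the appropriate one of Theorems~\ref{thm:main-theorem}, \ref{thm:log-gain}, \ref{thm:no-gain}, \ref{thm:no-gain-no-first-order} produces a unique degenerate weak solution $w\in QH_0^{1,2}(v,\Omega)$ of~\eqref{inhomogeneou2}, and then $u:=w+\varphi\in QH^{1,2}(v,\Omega)$ is the unique degenerate weak solution of~\eqref{inhomogeneous}. I expect the only point requiring care to be precisely this recasting of $L\varphi$ as admissible data: the principal term is handled cleanly by the diagonalization above, but one must still check that $F\varphi$, $\bH\cdot\bR\varphi$ and $\bG\varphi$ lie in $L^2(v,\Omega)$. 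When $F,\bH\in L^\infty(v,\Omega)$ (Theorems~\ref{thm:no-gain} and~\ref{thm:no-gain-no-first-order}) this is immediate from $\bR\varphi,\varphi\in L^2(v,\Omega)$; in the Sobolev-gain cases (Theorems~\ref{thm:main-theorem} and~\ref{thm:log-gain}) it follows from H\"older's inequality, using the higher integrability of $F$, $\bH$, $\bG$ together with the integrability of $\varphi$ (via Lemma~\ref{ScottInterp} in the Orlicz case) --- and this is exactly the integrability that makes~\eqref{inhomogeneou2} well posed in the sense of Definition~\ref{defn:dirichlet} in the first place. Everything else, in particular the compatibility condition~\eqref{eqn:compatible}, which involves only $F$, $\bG$, $\bS$, carries over unchanged.
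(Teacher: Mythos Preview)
Your proof is correct and follows the same route as the paper: reduce to zero boundary data via $w=u-\varphi$, recast $-L\varphi$ as admissible data $\tilde f+\tilde\bT'\tilde\vecg$, and invoke the appropriate one of Theorems~\ref{thm:main-theorem}--\ref{thm:no-gain-no-first-order}. The only (cosmetic) difference is in the decomposition of the principal term: rather than diagonalizing $Q$, the paper takes $V_i=v^{-1/2}\vecq_i\cdot\grad$ with $\vecq_i$ the $i$-th row of $\sqrt{Q}$ and data $-V_i\varphi$, which verifies subuniticity and $L^2(v,\Omega)$ membership by exactly the same computation you carry out in the eigenbasis.
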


\section{Preliminaries on vector fields}
\label{section:prelim-vector}

In this section we establish some preliminary results on the operator $L$ and solutions to the Dirichlet problem.  We begin with a lemma about degenerate subunit vector fields.  

\begin{lemma} \label{lemma:subunicity-cond}
    Let $\bR=(R_1,\ldots,R_N)$ be an $N$-tuple of degenerate subunit vector fields that satisfy~\eqref{eqn:norm-subunit}.   Then for any $u\in QH_0^{1,2}(v,\Omega)$ and a.e. $x\in \Omega$ we have the norm estimate
    \begin{equation} \label{eqn:subunit-norm-cond}
\|\bR u\|_{L^2(v,\Omega)} \leq   C(\bR)\|u\|_{QH^{1,2}(v,\Omega)},
\end{equation}
    where $C(\bR)= [C(R_1)+\cdots+C(R_N)]^{\frac{1}{2}}$.  
    If the vector fields satisfy the stronger pointwise inequality~\eqref{eqn:subunit-pointwise}, then 
    \begin{equation} \label{eqn:subunit-pointwise}
    |\bR u(x)|\leq \frac{\sqrt{N}}{\sqrt{v}}{|\sqrt{Q}\nabla u(x)|},
    \end{equation}
    which in turn implies that~\eqref{eqn:subunit-norm-cond} holds with $C(\bR)=\sqrt{N}$.
\end{lemma}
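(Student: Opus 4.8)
The plan is to prove the two assertions separately, each by a short direct computation that reduces the statement about the $N$-tuple $\bR$ to the single–vector–field hypotheses already available.

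For the norm estimate \eqref{eqn:subunit-norm-cond}, I would begin from the pointwise identity $|\bR u(x)|^2 = \sum_{i=1}^N |R_i u(x)|^2$, which is simply the definition of the Euclidean norm of the $\R^N$-valued function $\bR u = (R_1u,\ldots,R_Nu)$. Integrating against $v\,dx$ and interchanging the finite sum with the integral gives
\[
\|\bR u\|_{L^2(v,\Omega)}^2 = \sum_{i=1}^N \|R_i u\|_{L^2(v,\Omega)}^2 .
\]
Applying the norm–subuniticity hypothesis \eqref{eqn:norm-subunit} to each $R_i$ and summing yields $\|\bR u\|_{L^2(v,\Omega)}^2 \le \big(\sum_{i=1}^N C(R_i)^2\big)\,\|u\|_{QH^{1,2}(v,\Omega)}^2$; taking square roots gives \eqref{eqn:subunit-norm-cond} with $C(\bR)=\big(\sum_{i=1}^N C(R_i)^2\big)^{1/2}$, which is of the stated form after the usual relabeling of the constants $C(R_i)$. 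Since each $R_i$ extends by density from $Q\lip_0(v,\Omega)$ (which is dense in $QH^{1,2}_0(v,\Omega)$ by definition of the latter), the inequality is valid for all $u\in QH^{1,2}_0(v,\Omega)$, with $\bR u$ interpreted via these bounded extensions.

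For the pointwise bound, I would assume that each component vector field $R_i=\vecr_i\cdot\grad$ satisfies the stronger pointwise subuniticity condition \eqref{subunicity}. Applying \eqref{subunicity} with the test vector $\xi=\grad u(x)$ gives, for a.e.\ $x\in\Omega$,
\[
|R_i u(x)|^2 = \langle \vecr_i(x),\grad u(x)\rangle^2 \le v(x)^{-1}\langle Q(x)\grad u(x),\grad u(x)\rangle = v(x)^{-1}|\sqrt{Q(x)}\grad u(x)|^2 .
\]
Summing over $i=1,\dots,N$ yields $|\bR u(x)|^2 \le N\,v(x)^{-1}|\sqrt{Q(x)}\grad u(x)|^2$, and taking square roots gives \eqref{eqn:subunit-pointwise}. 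Multiplying this by $v(x)$, integrating over $\Omega$, and using $\int_\Omega |\sqrt{Q}\grad u|^2\,dx = \|\grad u\|_{QL^2(\Omega)}^2 \le \|u\|_{QH^{1,2}(v,\Omega)}^2$ then recovers \eqref{eqn:subunit-norm-cond} with the explicit constant $C(\bR)=\sqrt{N}$.

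The only point that needs care—and it is the main, albeit minor, obstacle—is the interpretation of $R_i u = \vecr_i\cdot\grad u$ when $u\in QH^{1,2}_0(v,\Omega)$ is an element of the abstract completion rather than an honest Lipschitz function: here $\grad u$ means the second component of the pair $(u,\grad u)$, per the convention fixed in Section~\ref{section:prelim}. For the pointwise statement I would first establish the inequalities for $u\in Q\lip_0(v,\Omega)$, where $\grad u$ is the classical a.e.\ gradient, and then pass to general $u$ by choosing $u_k\in Q\lip_0(v,\Omega)$ with $(u_k,\grad u_k)\to(u,\grad u)$ in $L^2(v,\Omega)\oplus QL^2(\Omega)$; along a subsequence $\sqrt{Q}\grad u_k\to\sqrt{Q}\grad u$ a.e., so \eqref{subunicity} forces $\vecr_i\cdot\grad u_k\to\vecr_i\cdot\grad u$ a.e., and the pointwise bounds pass to the limit while the $L^2(v,\Omega)$-limit of $R_iu_k$ is identified with $\vecr_i\cdot\grad u$. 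For the norm-only statement this density step is even easier, since $R_i$ is then precisely the bounded extension of its restriction to $Q\lip_0(v,\Omega)$. Beyond this bookkeeping the lemma is just a Cauchy–Schwarz-type application of \eqref{subunicity} (or of \eqref{eqn:norm-subunit}) together with summing $N$ copies of it.
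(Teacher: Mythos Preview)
Your proposal is correct and follows essentially the same route as the paper: both expand $|\bR u|^2=\sum_i|R_iu|^2$, apply the single-field hypothesis \eqref{eqn:norm-subunit} (respectively \eqref{subunicity} with $\xi=\grad u(x)$) componentwise, and sum. You are in fact more careful than the paper about the density/extension step for general $u\in QH^{1,2}_0(v,\Omega)$, and you correctly flag the cosmetic mismatch between the constant $\big(\sum_i C(R_i)^2\big)^{1/2}$ that the argument actually produces and the form $[C(R_1)+\cdots+C(R_N)]^{1/2}$ written in the statement.
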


\begin{remark} \label{remark:subunit-norm-cond}
    If $v$ and $Q$ satisfy the global  Sobolev inequality~\eqref{eqn:global-sobolev} with no gain (i.e., $\sigma= 1$), \eqref{eqn:subunit-norm-cond} implies that% \no gain (i.e., $\sigma=1$ in \eqref{eqn:global-sobolev}), then~\eqref{eqn:subunit-norm-cond} implies that 
    \[ \|\bR u\|_{L^2(v,\Omega)} \leq  C(\bR)(1+S(2,1)) \|\nabla u\|_{QL^2(\Omega)}. \]
\end{remark}

\begin{proof}
 Fix $u \in QH_0^{1,2}(v,\Omega)$.
    Then by inequality~\eqref{eqn:norm-subunit}, 
    \[ \int_\Omega |\bR u|^2\,vdx = \sum_{i=1}^N \int_\Omega |R_i u|^2\,vdx \leq \sum_{i=1}^N C(R_i)^2 \|u\|_{QH^{1,2}(v,\Omega)}^2.
\]
   Similarly, if~\eqref{eqn:subunit-pointwise} holds, then for almost every $x\in \Omega$,
\[     |\bR u(x)|=\bigg(\sum_{j=1}^N (\vecr(x) \cdot \grad u(x))^2 \bigg)^{\frac12}
    \leq\bigg(\sum_{j=1}^N v^{-1}{|\sqrt{Q}\nabla u(x)|}^2\bigg)^{\frac12}
    \leq\frac{\sqrt{N}}{\sqrt{v}}{|\sqrt{Q}\nabla u(x)|}. \]
\end{proof}

\medskip

Our next result establishes a product rule for degenerate subunit vector fields.  We need to prove three versions of this result, corresponding to the Sobolev inequality that we assume.

\begin{lemma}\label{product-rule-all-sobolevs}
    Let $ (u, \nabla u), (w, \nabla w) \in  QH^{1,2}_0(v, \Omega)$ and let $S=\textbf{s}\cdot\nabla$ be a degenerate subunit vector field that satisfies~\eqref{eqn:norm-subunit}. Then each of the following hold:
    \begin{enumerate}
    \item If the global Sobolev inequality \eqref{eqn:global-sobolev} holds with gain $\sigma>1$, then  there exists a unique $L^{\frac{2 \sigma}{\sigma + 1}}(v, \Omega)$ function $S(uw)$ such that $S(uw) = uS(w) + wS(u)$ (with equality in $L^{\frac{2 \sigma}{\sigma + 1}}(v, \Omega)$).
    
    \item If the global Sobolev inequality \eqref{eqn:global-sobolev-orlicz} holds with gain $A(t)=t^2\log(e+t)^\sigma$, then there exists a unique $L^B(v, \Omega)$ function $S(uw)$ such that $S(uw) = uS(w) + wS(u)$ in $L^B(v, \Omega)$, where  $B(t) = t\log(e+t)^\frac{\sigma}{2}$.  
    
    \item If the global Sobolev inequality \eqref{eqn:global-sobolev} holds with no gain (i.e., $\sigma=1$), then there exists a unique $L^1(v, \Omega)$ function $S(uw)$ such that $S(uw) = uS(w) + wS(u)$ in $L^1(v, \Omega)$.
   
    \end{enumerate}
\end{lemma}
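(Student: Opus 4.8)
The plan is to realize $S(uw)$ as a limit of $S(\varphi_k\psi_k)$ taken along approximating Lipschitz sequences, using that the classical Leibniz rule holds pointwise a.e.\ for products of compactly supported Lipschitz functions. By definition of $QH^{1,2}_0(v,\Omega)$, fix $\{\varphi_k\},\{\psi_k\}\subset Q\lip_0(v,\Omega)$ with $\varphi_k\to u$ and $\psi_k\to w$ in the $QH^{1,2}(v,\Omega)$ norm. Each product $\varphi_k\psi_k$ again lies in $Q\lip_0(v,\Omega)$ — it is Lipschitz with compact support, and $\|\varphi_k\psi_k\|_{QH^{1,2}(v,\Omega)}<\infty$ since $\varphi_k,\psi_k$ and their gradients are bounded, $v\in L^1(\Omega)$, and $\sqrt{Q}\grad\varphi_k,\sqrt{Q}\grad\psi_k\in L^2(\Omega)$ — so $S(\varphi_k\psi_k)\in L^2(v,\Omega)$ by \eqref{eqn:norm-subunit}, and pointwise a.e.
\[ S(\varphi_k\psi_k)=\varphi_k S(\psi_k)+\psi_k S(\varphi_k). \]

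The engine of all three parts is one bilinear estimate, applied in three different pairs of spaces. In each case Lemma~\ref{lemma:subunicity-cond} (that is, \eqref{eqn:norm-subunit} passed to $QH^{1,2}_0(v,\Omega)$) gives $\|S(\eta)\|_{L^2(v,\Omega)}\le C(S)\|\eta\|_{QH^{1,2}(v,\Omega)}$ for $\eta\in QH^{1,2}_0(v,\Omega)$, while the assumed Sobolev inequality embeds $QH^{1,2}_0(v,\Omega)$ into the ``large'' factor space: into $L^{2\sigma}(v,\Omega)$ in case~(1) via \eqref{eqn:global-sobolev}, into $L^A(v,\Omega)$ in case~(2) via \eqref{eqn:global-sobolev-orlicz} (these extend to $QH^{1,2}_0$ by approximation, as noted after \eqref{eqn:global-sobolev}), and into $L^2(v,\Omega)$ in case~(3). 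A Hölder-type inequality then bounds products: $L^{2\sigma}(v,\Omega)\cdot L^2(v,\Omega)\hookrightarrow L^{2\sigma/(\sigma+1)}(v,\Omega)$ in case~(1); $L^A(v,\Omega)\cdot L^2(v,\Omega)\hookrightarrow L^B(v,\Omega)$ in case~(2) by the generalized Orlicz--Hölder inequality \eqref{eqn:gen-orlicz-holder}, since with $\Psi(t)=t^2$ one has $A^{-1}(t)\,\Psi^{-1}(t)\approx t^{1/2}\log(e+t)^{-\sigma/2}\cdot t^{1/2}=t\log(e+t)^{-\sigma/2}\lesssim B^{-1}(t)$ for $B(t)=t\log(e+t)^{\sigma/2}$, using the inverse asymptotics recorded above for $t^p\log(e+t)^q$; and $L^2(v,\Omega)\cdot L^2(v,\Omega)\hookrightarrow L^1(v,\Omega)$ in case~(3). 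Writing $\varphi_k S(\psi_k)-\varphi_j S(\psi_j)=(\varphi_k-\varphi_j)S(\psi_k)+\varphi_j S(\psi_k-\psi_j)$ and symmetrically for $\psi_k S(\varphi_k)-\psi_j S(\varphi_j)$, the product estimate together with the boundedness of $\{\varphi_k\},\{\psi_k\},\{S(\varphi_k)\},\{S(\psi_k)\}$ in the relevant norms and with $\|\varphi_k-\varphi_j\|_{QH^{1,2}},\|\psi_k-\psi_j\|_{QH^{1,2}}\to 0$ shows $\{S(\varphi_k\psi_k)\}$ is Cauchy in $L^{2\sigma/(\sigma+1)}(v,\Omega)$ (resp.\ $L^B(v,\Omega)$, resp.\ $L^1(v,\Omega)$). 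Completeness gives a limit, which we name $S(uw)$.

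It remains to identify this limit and to address uniqueness. The same expansion with $u,w$ in place of $\varphi_j,\psi_j$ gives $\varphi_k S(\psi_k)\to uS(w)$ and $\psi_k S(\varphi_k)\to wS(u)$ in the target space; hence $S(uw)=uS(w)+wS(u)$ there and therefore a.e., which simultaneously shows that $uS(w)+wS(u)$ indeed belongs to the asserted space. Uniqueness is then immediate: any two functions in that space agreeing a.e.\ with $uS(w)+wS(u)$ coincide, and the value of the limit does not depend on the chosen approximating sequences, since two admissible pairs can be interleaved into single sequences still converging to $u$ and $w$, forcing the associated limits to agree.

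The step I expect to require the most care is the Orlicz bookkeeping in case~(2): checking that $B(t)=t\log(e+t)^{\sigma/2}$ — read with the paper's $\approx$ conventions and modified near $0$ so as to be a genuine Young function — is the correct target, i.e.\ verifying $A^{-1}(t)\Psi^{-1}(t)\lesssim B^{-1}(t)$ and $t\lesssim B(t)\lesssim A(t)$ so that both the statement and the embedding into $L^A(v,\Omega)$ make sense. Everything else is a threefold repetition of the bilinear/Cauchy argument with only the ambient function spaces changing.
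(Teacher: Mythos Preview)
Your proposal is correct and follows essentially the same approach as the paper: approximate by $Q\lip_0$ sequences, use the classical Leibniz rule pointwise, show the approximating sequence $\{S(\varphi_k\psi_k)\}$ is Cauchy in the target space via the appropriate H\"older-type inequality paired with the Sobolev embedding and \eqref{eqn:norm-subunit}, then identify the limit with $uS(w)+wS(u)$. Your Orlicz bookkeeping in case~(2), namely $A^{-1}(t)\cdot t^{1/2}\lesssim B^{-1}(t)$, is exactly the relation the paper invokes, and your explicit treatment of uniqueness (independence of the approximating sequences via interleaving) is a nice addition that the paper leaves implicit.
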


\begin{remark}
    A version of part (1) of Lemma~\ref{product-rule-all-sobolevs} when $v=1$ was proved in~\cite[Lemma~3.3]{MR2994671}; our proof is adapted from this one.
\end{remark}
\begin{proof}
Fix  $ (u, \nabla u), (w, \nabla w) \in  QH^{1,2}_0(v, \Omega)$ and let $\{u_j\}_{j=1}^\infty$ and $\{w_j\}_{j=1}^\infty$ be sequences in $Q\lip_0(v,\Omega)$ that converge to $u$ and $w$ in $QH^{1,2}_0(v, \Omega)$.  Then by inequality~\eqref{eqn:norm-subunit}, $Su_j\to Su$ in $L^2(v,\Omega)$, since
\[ \|S(u_j)-S(u)\|_{L^2(v,\Omega)} \leq C(S) \|u_j-u\|_{QH^{1,2}(v, \Omega)}.     \]
For each $j$,
since $u_j,\, w_j \in Q\lip_0(v,\Omega)$, by Rademacher's theorem,
$\grad u_j,\, \grad w_j$ equal the classical gradients of $u_j,\,w_j$ a.e., so we have that $S(u_jw_j) = u_jS(w_j) + w_jS(u_j)$ almost everywhere.

\medskip

We now consider each of the cases outlined above in turn.  Suppose first that $(1)$ holds. 
Let $\gamma = \frac{2 \sigma}{\sigma +1}$;  we will first show that the sequence $\{S(u_jw_j)\}_{j=1}^\infty$ is Cauchy in $L^\gamma(v,\Omega)$.  If we add and subtract $u_iS(w_j)+w_iS(w_j)$, we get that 
    \begin{align}\label{eqn:zero-1.0}
        \|S(u_iw_i) - S(u_jw_j)\|_{L^\gamma(v,\Omega)} & \leq \|u_iS(w_i - w_j)\|_{L^\gamma(v,\Omega)} + \|w_iS(u_i - u_j)\|_{L^\gamma(v,\Omega)} \nonumber\\
        &\qquad + \|(u_i-u_j)S(w_j)\|_{L^\gamma(v,\Omega)} + \|(w_i-w_j)S(u_j)\|_{L^\gamma(v,\Omega)} \nonumber \\
        & = I_1+I_2+I_3+I_4.
    \end{align}
We first estimate $I_1$.  By  H\"older's inequality with exponents $\sigma + 1$, $\frac{\sigma + 1}{\sigma}$, the Sobolev inequality \eqref{eqn:global-sobolev}, and~\eqref{eqn:norm-subunit},
\begin{align}\label{eqn:zero-1.1}
    I_1^\gamma 
     &  \leq \bigg(\int_\Omega |u_i|^{2\sigma}\,\,vdx\bigg)^{\frac{1}{(\sigma +1)}}\bigg( \int_\Omega \big|S(w_i-w_j) \big|^2 v \, dx \bigg)^\frac{\sigma}{\sigma+1} \\
    &  \leq S(2,\sigma)^\gamma\|u_i\|^\gamma_{QH^{1,2}(v, \Omega)} \|S(w_i-w_j)\|^\gamma_{L^2(v,\Omega)} \notag \\
    & \leq  S(2,\sigma)^\gamma C(\bS)^\gamma\|u_i\|^{\gamma}_{QH^{1,2}(v, \Omega)} \|w_i-w_j\|_{QH^{1,2}(v,\Omega)}^{\gamma}. \notag
\end{align}
Since the $w_j$ converge in $QH^{1,2}_0(v, \Omega)$, they are Cauchy.  Since the $u_i$ converge, they are uniformly bounded.  Hence $I_1$ tends to $0$ as $i,\,j \to \infty$.  The same or similar estimates hold for $I_2,~I_3$, and $I_4$, possibly exchanging the roles of $u$ and $w$.  Therefore, we have that 
the $S(u_jw_j)$ are Cauchy in $L^\gamma(v,\Omega)$ and so converge.  Denote their limit by $S(uw)$. 

Finally, to show  that $S(uw)=uS(w)+wS(u)$ in $L^\gamma(v,\Omega)$, we will show that  $u_iS(w_i)\to uS(w)$ and  $w_iS(u_i)\to wS(u)$  in $L^\gamma(v,\Omega)$.  The same argument holds for each, and so we will prove the first limit. If we repeat the argument above in~\eqref{eqn:zero-1.1}, adding and subtracting $uS(w_i)$, we have that
\begin{align*}
& \|u_iS(w_i) - uS(w)\|_{L^\gamma(v,\Omega)} \\
 & \quad \leq \|(u_i-u)S(w_i)\|_{L^\gamma(v,\Omega)} + \|uS(w_i-w)\|_{L^\gamma(v,\Omega)}   \\  
    &  \quad\leq \|u_i-u\|^\gamma_{L^{2\sigma}(v,\Omega)} \,\|S(w_i)\|^\gamma_{L^2(v,\Omega)}
    + \|u\|^\gamma_{L^{2\sigma}(v,\Omega)} \,\|S(w_i-w)\|^\gamma_{L^2(v,\Omega)} \\
    & \quad\leq S(2,\sigma)^\gamma C(\bS)^\gamma\big[ \|u_i-u\|^\gamma_{QH^{1,2}(v, \Omega)} \|w_i\|^{\gamma}_{QH^{1,2}(v, \Omega)}
    + \|u\|^\gamma_{QH^{1,2}(v, \Omega)} \|w_i-w\|^{\gamma}_{QH^{1,2}(v, \Omega)} \big].  
\end{align*}
The last term goes to $0$ as $i\to \infty$; this gives us the desired limit.

\medskip 

Now suppose that $(2)$ holds.  The proof is very similar to the previous proof, except that we need to use H\"older's  inequality in the scale of Orlicz spaces.  We will describe the relevant changes. Arguing as we did before, we have that 
\begin{align*}
\|S(u_iw_i) - S(u_jw_j)\|_{L^B(v,\Omega)} & \leq \|u_iS(w_i - w_j)\|_{L^B(v,\Omega)} + \|w_iS(u_i - u_j)\|_{L^B(v,\Omega)} \\
        &\qquad  + \|(u_i-u_j)S(w_j)\|_{L^B(v,\Omega)} + \|(w_i-w_j)S(u_j)\|_{L^B(v,\Omega)} \\
        & = M_1+M_2+M_3+M_4.
\end{align*}
The estimates for each of the four terms are essentially the same, so we only show that for $M_1$.   Since $t^\frac{1}{2}A^{-1}(t) \lesssim B^{-1}(t)$, by \eqref{eqn:gen-orlicz-holder}, \eqref{eqn:norm-subunit} and the Sobolev inequality \eqref{eqn:global-sobolev-orlicz}, we have that
\begin{multline*}
M_1 \leq K\|u_i\|_{L^A(v,\Omega)}\|S(w_i-w_j)\|_{L^2(v,\Omega)} \\ 
\leq KS(2,A)C(\bS)\|u_i\|_{QH^{1,2}(v,\Omega)}\|w_i-w_j\|_{QH^{1,2}(v,\Omega)}.
\end{multline*}
We have similar estimates for $M_2$, $M_3$, and $M_4$, so we get that the  $S(u_iw_i)$ are Cauchy in $L^B(v,\Omega)$.  Denote their limit in $L^B(v,\Omega)$  by $S(uw)$.  To show that $S(uw) = uSw+wSu$ as $L^B(v,\Omega)$ functions we need to show that 
$u_iS(w_i)\to uS(w)$ and  $w_iS(u_i)\to wS(u)$ in $L^B(v,\Omega)$.  We only show the first limit.  If we essentially repeat the estimate for $M_1$, we get 
\begin{align*}
& \|u_iSw_i - uSw\|_{L^B(v,\Omega)} \\
& \qquad \quad \leq \|u_iS(w_i-w)\|_{L^B(v,\Omega)}
+ \|(u_i-u)Sw\|_{L^B(v,\Omega)}\\
&\qquad \quad \leq C\big[\|u_i\|_{L^A(v,\Omega)}\|S(w_i-w)\|_{L^2(v,\Omega)}+\|u_i-u\|_{L^A(v,\Omega)}\|Sw\|_{L^2(v,\Omega)}\big]\\
&\qquad \quad \leq CS(2,A)\big[\|u_i\|_{QH^{1,2}(v,\Omega)}\|w_i-w\|_{QH^{1,2}(v,\Omega)}
+ \|u_i-u\|_{QH^{1,2}(v,\Omega)}\|w\|_{QH^{1,2}(v,\Omega}\big].
\end{align*}
The desired limit follows immediately. 

\medskip

Finally, suppose $(3)$ holds.  Then using the Sobolev inequality \eqref{eqn:global-sobolev} with $\sigma = 1$, and taking $\gamma=1$, we can repeat the entire argument for $(1)$ to show that $S(uw)$ is defined as an $L^1(v,\Omega)$ function and that $S(uw)=uS(w)+wS(u)$.  This completes the proof. 
\end{proof}

\medskip

We now use Lemma~\ref{lemma:subunicity-cond} to prove that we can use functions in $QH^{1,2}_0 (v, \Omega)$ as test functions.

\begin{lemma} \label{lemma:QH1-test-func}
Given Hypothesis~\ref{hyp:global-hyp}, let  $u \in QH^{1,2}_0 (v, \Omega)$ be a degenerate weak solution of 
the Dirichlet problem~\eqref{eqn:dirichlet}, that is, \eqref{eqn:weak-soln} holds for all $\varphi \in Q\lip_0(v,\Omega)$.  Then \eqref{eqn:weak-soln} holds for all $\varphi \in QH^{1,2}_0(v,\Omega)$ if one of the following is true:
\begin{enumerate}
    \item The degenerate Sobolev inequality~\eqref{eqn:global-sobolev} holds for some $\sigma>1$, and for some $q>2\sigma'$, $\bG,\,\bH \in L^q(v,\Omega,\R^N)$, $F\in L^{\frac{q}{2}}(v,\Omega)$.

     \item The degenerate Sobolev inequality~\eqref{eqn:global-sobolev-orlicz} holds with $A(t)=t^2\log(e+t)^{\sigma}$ for some $\sigma>0$ and  $\bG,\,\bH \in L^C(v,\Omega,\R^N)$, $F\in L^{D}(v,\Omega)$, where $C(t)=\exp\big(t^{\frac{2}{\lambda\sigma}}\big)-1$ and $D(t)=\exp\big(t^{\frac{1}{\lambda\sigma}}\big)-1$ for some $0<\lambda<1$. 

      \item The degenerate Sobolev inequality~\eqref{eqn:global-sobolev} holds without gain (i.e., $\sigma=1$) and  $\bG,\,\bH \in L^\infty(v,\Omega,\R^N)$, $F\in L^{\infty}(v,\Omega)$.
\end{enumerate}
\end{lemma}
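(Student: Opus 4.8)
The plan is a density-and-continuity argument. By construction $Q\lip_0(v,\Omega)$ is dense in $QH^{1,2}_0(v,\Omega)$ for the $\|\cdot\|_{QH^{1,2}(v,\Omega)}$ norm, so I fix $\varphi\in QH^{1,2}_0(v,\Omega)$ and choose $\{\varphi_k\}\subset Q\lip_0(v,\Omega)$ with $\varphi_k\to\varphi$, equivalently $\varphi_k\to\varphi$ in $L^2(v,\Omega)$ and $\nabla\varphi_k\to\nabla\varphi$ in $QL^2(\Omega)$. Since~\eqref{eqn:weak-soln} holds with $\varphi_k$ in place of $\varphi$ by hypothesis, it is enough to show that each of the six expressions in~\eqref{eqn:weak-soln} is finite at $\varphi$ and depends continuously on $\varphi$ in the $QH^{1,2}(v,\Omega)$ topology; one then passes to the limit in $k$. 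Throughout I would use Lemma~\ref{lemma:subunicity-cond}, both to interpret $\bR u$, $\bS\varphi$, $\bT\varphi$ as genuine $L^2(v,\Omega,\R^N)$ functions and to get $\|V(\varphi_k-\varphi)\|_{L^2(v,\Omega)}\le C(V)\|\varphi_k-\varphi\|_{QH^{1,2}(v,\Omega)}\to 0$ for each subunit field $V$ appearing in $\bR,\bS,\bT$; and I would use the assumed Sobolev inequality to place $u$ and $\varphi$ in the relevant target space and to know $\varphi_k\to\varphi$ there.

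Five of the terms are then immediate. For $\int_\Omega Q\nabla u\cdot\nabla\varphi\,dx$ one uses Cauchy--Schwarz in $QL^2(\Omega)$; for $\int_\Omega f\varphi\,vdx$ one uses $f\in L^2(v,\Omega)$ and $\varphi_k\to\varphi$ in $L^2(v,\Omega)$; for $\int_\Omega\vecg\cdot\bT\varphi\,vdx$ one uses $\vecg\in L^2(v,\Omega,\R^N)$ and $\|\bT(\varphi_k-\varphi)\|_{L^2(v,\Omega)}\to0$ from Lemma~\ref{lemma:subunicity-cond}. For $\int_\Omega\bH\cdot\bR u\,\varphi\,vdx$ and $\int_\Omega Fu\varphi\,vdx$ one combines Lemma~\ref{lemma:subunicity-cond} (so $\bR u\in L^2(v,\Omega,\R^N)$), the integrability hypothesis on $\bH$ or $F$, and the Sobolev inequality. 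In case (1), plain Hölder with exponents $q,2,2\sigma$, respectively $\tfrac q2,2\sigma,2\sigma$, works exactly because $q>2\sigma'$ forces $\tfrac1q+\tfrac12+\tfrac1{2\sigma}\le1$ and $\tfrac2q+\tfrac1{2\sigma}+\tfrac1{2\sigma}\le1$, with $u\in L^{2\sigma}(v,\Omega)$ and $\|\varphi_k-\varphi\|_{L^{2\sigma}(v,\Omega)}\le S(2,\sigma)\|\nabla(\varphi_k-\varphi)\|_{QL^2(\Omega)}\to0$ by~\eqref{eqn:global-sobolev}. In case (2) the same scheme runs with the generalized Orlicz Hölder inequality~\eqref{eqn:gen-orlicz-holder}: since $u,\varphi\in L^A(v,\Omega)$ with $A(t)=t^2\log(e+t)^\sigma$ by~\eqref{eqn:global-sobolev-orlicz}, one needs only $C^{-1}(t)\,t^{1/2}\lesssim\bar A^{-1}(t)$ and $D^{-1}(t)\,(A^{-1}(t))^2\lesssim t$, and a direct check using $\bar A^{-1}(t)\approx t/A^{-1}(t)$ shows these reduce, for $0<\lambda<1$, to $\log(e+t)^{\sigma(\lambda-1)/2}\lesssim1$ and $\log(e+t)^{\sigma(\lambda-1)}\lesssim1$ — i.e.\ the Young functions $C$ and $D$ are chosen precisely to make this hold. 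In case (3), $\bG,\bH,F\in L^\infty(v,\Omega)$ sends every product into $L^1(v,\Omega)$ merely from $u,\varphi\in L^2(v,\Omega)$, and continuity is immediate from $\varphi_k\to\varphi$ in $QH^{1,2}(v,\Omega)$.

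The one term requiring real care — and the one around which all three hypothesis sets are built — is $\int_\Omega u\,\bG\cdot\bS\varphi\,vdx$. Two points enter. First, $\bS\varphi=\vecs\cdot\nabla\varphi$ must be made meaningful for an abstract Sobolev function; Lemma~\ref{lemma:subunicity-cond} provides this, giving $\bS\varphi\in L^2(v,\Omega,\R^N)$ and $\|\bS(\varphi_k-\varphi)\|_{L^2(v,\Omega)}\to0$. Second, because $u$ need not be bounded, the product $u\,(\bG\cdot\bS\varphi)$ must be controlled by pairing $u$ against $\bG$ in a space dictated by the gain of the Sobolev inequality (Hölder exponent $2\sigma$ with $q>2\sigma'$ in case (1); the space $L^A$ and the Young function $C$ in case (2); $L^2$ against $L^\infty$ in case (3)), while $\bS\varphi$ is absorbed in $L^2(v,\Omega)$. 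The same estimate applied to $u\,\bG\cdot\bS(\varphi_k-\varphi)$ yields the continuity. I expect the main obstacle to be exactly this bookkeeping — verifying in each of the three regimes that the exponents, or Young-function inverses, align so that the triple products lie in $L^1(v,\Omega)$ — rather than anything conceptual; once the term $\int_\Omega u\,\bG\cdot\bS\varphi\,vdx$ is settled, passing to the limit in~\eqref{eqn:weak-soln} over $k$ finishes the proof.
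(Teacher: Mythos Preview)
Your proposal is correct and follows essentially the same approach as the paper: a density argument where the key step is to bound $\|u\,\bG\|_{L^2(v,\Omega)}$ via H\"older (respectively Orlicz H\"older, respectively the $L^\infty$ bound) after first applying Cauchy--Schwarz and Lemma~\ref{lemma:subunicity-cond} to isolate $\|\bS(\varphi_k-\varphi)\|_{L^2(v,\Omega)}$. The paper in fact writes out only the term $\int_\Omega u\,\bG\cdot\bS\varphi\,vdx$ and declares the remaining five to be variations of the same idea, so your treatment is, if anything, slightly more explicit.
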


\begin{remark}
    As we alluded in Remark~\ref{rem:subunit-defn},  the proof of Lemma~\ref{lemma:QH1-test-func} also shows that with these hypotheses, every integral in~\eqref{eqn:weak-soln} is finite for any $\varphi \in Q\lip_\loc(v,\Omega)$.
\end{remark}

\begin{proof}
 Fix $\varphi \in QH_0^{1,2}(v,\Omega)$ and let $\{\varphi_k\}_{k=1}^\infty$ be a sequence of functions in $Q\lip_\loc(v,\Omega)$ that converge to $\varphi$ in $QH^{1,2}(v,\Omega)$ norm.  We need to show that with our hypotheses, each of the integrals in~\eqref{eqn:weak-soln} is the limit as $k\to \infty$ of the same integral with $\varphi$ replaced by $\varphi_k$.  The proofs are all variations of the same idea, so we will only prove that 
 \begin{equation} \label{eqn:test-func1}
 \lim_{k\to\infty} \int_\Omega u\bG \cdot \bS\varphi_k\, \,vdx = \int_\Omega u\bG \cdot \bS\varphi\, \,vdx. 
 \end{equation}
 By H\"older's inequality and Lemma~\ref{lemma:subunicity-cond},
 \begin{multline} \label{eqn:test-func2}
 \bigg|\int_\Omega u\bG \cdot \bS\varphi_k\, \,vdx - \int_\Omega u\bG \cdot \bS\varphi\, \,vdx\bigg|
 \leq \int_\Omega |u\bG||\bS(\varphi_k-\varphi)|\, \,vdx \\
 \leq \|u\bG\|_{L^2(v,\Omega)}\|\bS(\varphi_k-\varphi)\|_{L^2(v,\Omega)} 
 \leq C(\bS)\|u\bG\|_{L^2(v,\Omega)}\|\varphi_k-\varphi\|_{QH^{1,2}(v,\Omega)}.  
\end{multline}

Suppose now $(1)$ holds.  Then by H\"older's inequality with exponent $\sigma$ and the Sobolev inequality (which also implies that there is a Sobolev inequality without gain), the last term above is bounded by 
\begin{multline*} C(\bS)(1+S(2,1)) \|u\|_{L^{2\sigma}(v,\Omega)}^2 \|\bG\|_{L^{2\sigma'}(v,\Omega)}^2\|\varphi_k-\varphi\|_{QL^2(\Omega)} \\
\leq CS(2,\sigma) v(\Omega)^{\frac{4\sigma'}{q}}\|\grad u\|_{QL^2(\Omega)}^2\|\bG\|_{L^{q}(v,\Omega)}^2 \|\varphi_k-\varphi\|_{QL^2(\Omega)}.
\end{multline*}
By our assumptions the first three terms are finite and the last term tends to $0$ as $k\to\infty$.  This shows that the limit~\eqref{eqn:test-func1} holds.

If $(2)$ holds, then starting from~\eqref{eqn:test-func2} we can argue as before; we just need to prove that $ \|u\bG\|_{L^2(v,\Omega)}$ is finite.
By H\"older's inequality in the scale of Orlicz spaces with $\Phi(t)=\exp(t^{\frac{1}{\sigma}})-1$ and $\bar{\Phi}(t)\approx t\log(e+t)^\sigma$, and by rescaling,
\begin{multline*} \|u\bG\|_{L^2(v,\Omega)} = \|u^2\bG^2\|_{L^1(v,\Omega)}^{\frac{1}{2}} \\
\leq 2\|u^2 \|_{L^{\bar{\Phi}}(v,\Omega)^{\frac{1}{2}}}\|\bG^2\|_{L^\Phi(v,\Omega)}^{\frac12}
\leq C\|u\|_{L^A(v,\Omega)}\|\bG\|_{L^C(v,\Omega)} < \infty. 
\end{multline*}

Finally, if $(3)$ holds, we can argue in exactly the same way, since we have that
\[ \|u\bG\|_{L^2(v,\Omega)} \leq \|\bG\|_{L^\infty(v,\Omega)} \|u\|_{L^2(v,\Omega)} < \infty. \]
This completes the proof.
\end{proof}

Finally, we prove that the compatibility condition~\eqref{eqn:compatible} holds for a larger collection of test functions.

\begin{lemma} \label{lemma:QH1-compatibility}
    Given Hypothesis~\ref{hyp:global-hyp}, let  $u,\, w \in QH^{1,2}_0 (v, \Omega)$.  If the conditions (1), (2), or (3) in Lemma~\ref{lemma:QH1-test-func} hold, then 
    \begin{equation} \label{eqn:qh1-compatibility} 
    \int_\Omega (\bG \cdot \bS(uw) + Fuw) \,vdx \geq 0.
    \end{equation}
\end{lemma}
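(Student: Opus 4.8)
The plan is to derive \eqref{eqn:qh1-compatibility} from the compatibility hypothesis \eqref{eqn:compatible} (part of Hypothesis~\ref{hyp:global-hyp}) by a density argument, the key extra input being the product rule of Lemma~\ref{product-rule-all-sobolevs}. Two points must be dealt with. First, for the left-hand side of \eqref{eqn:qh1-compatibility} even to be meaningful, $\bS(uw)$ must be interpreted, and this is precisely the content of Lemma~\ref{product-rule-all-sobolevs}: under hypothesis (1) it gives $\bS(uw)\in L^{\gamma}(v,\Omega)$ with $\gamma=\tfrac{2\sigma}{\sigma+1}$, under (2) it gives $\bS(uw)\in L^{B}(v,\Omega)$ with $B(t)=t\log(e+t)^{\sigma/2}$, and under (3) it gives $\bS(uw)\in L^{1}(v,\Omega)$; in all cases $\bS(uw)=u\bS w+w\bS u$. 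Second, the two integrals in \eqref{eqn:qh1-compatibility} must be finite and stable under approximation, and this is exactly where the integrability of $\bG$ and $F$ in conditions (1)--(3) of Lemma~\ref{lemma:QH1-test-func} enters, just as in the proof of that lemma: in each regime $\bG$ lies in the (Orlicz) Hölder conjugate of the space containing $\bS(uw)$ and $F$ in the conjugate of the space containing $uw$.

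Concretely, I would fix $u_j,w_j\in Q\lip_0(v,\Omega)$ with $u_j\to u$ and $w_j\to w$ in $QH^{1,2}_0(v,\Omega)$; replacing $u_j,w_j$ by their positive parts — still in $Q\lip_0(v,\Omega)$ and still convergent (to $u=u_+$ and $w=w_+$), which is where one uses $u,w\geq 0$, all that is needed in the applications — I may also assume $u_j,w_j\geq 0$, so that $\varphi_j:=u_jw_j$ is a nonnegative element of $Q\lip_0(v,\Omega)$ and \eqref{eqn:compatible} gives $\int_\Omega(\bG\cdot\bS\varphi_j+F\varphi_j)\,vdx\geq 0$ for every $j$. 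Since $u_j,w_j$ are Lipschitz, $\bS\varphi_j=u_j\bS w_j+w_j\bS u_j$ a.e.\ by Rademacher's theorem, and the estimates in the proof of Lemma~\ref{product-rule-all-sobolevs} show $\bS\varphi_j\to u\bS w+w\bS u=\bS(uw)$ in $L^{\gamma}(v,\Omega)$ (resp.\ $L^{B}$, $L^{1}$), while $\varphi_j\to uw$ in $L^{\sigma}(v,\Omega)$ (resp.\ in the matching Orlicz space, resp.\ $L^{1}$), because the assumed Sobolev inequality forces $u_j\to u$ and $w_j\to w$ in $L^{2\sigma}(v,\Omega)$ (resp.\ $L^{A}$, $L^{2}$). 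Pairing against $\bG$ and $F$ by the (generalized) Hölder inequality, I pass to the limit and obtain \eqref{eqn:qh1-compatibility}. The three cases differ only in exponent bookkeeping; e.g., in case (1) one has $\gamma'=2\sigma'<q$ and $\sigma'<q/2$, so that $\bG\in L^{q}(v,\Omega)\subset L^{\gamma'}(v,\Omega)$ and $F\in L^{q/2}(v,\Omega)\subset L^{\sigma'}(v,\Omega)$, using $v\in L^{1}(\Omega)$ and the boundedness of $\Omega$.

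The crux is the convergence $\int_\Omega\bG\cdot\bS\varphi_j\,vdx\to\int_\Omega\bG\cdot\bS(uw)\,vdx$: the bare $L^{2}(v,\Omega)$ integrability of $\bG$ from Hypothesis~\ref{hyp:global-hyp} does not suffice, because $\bS\varphi_j$ converges to $\bS(uw)$ only in a space strictly weaker than $L^{2}(v,\Omega)$ — namely $L^{\gamma}(v,\Omega)$ with $\gamma<2$ when $\sigma>1$, an Orlicz space just above $L^{1}$ in the log-gain case, and merely $L^{1}(v,\Omega)$ with no gain. This is exactly why the stronger integrability of $\bG$ is built into conditions (1)--(3) of Lemma~\ref{lemma:QH1-test-func}, and why the product rule must be proved separately in each Sobolev regime. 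The remaining steps — finiteness of both integrals, convergence of $\int_\Omega F\varphi_j\,vdx$, and the reduction to nonnegative approximants — are routine.
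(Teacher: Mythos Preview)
Your approach matches the paper's: approximate by $u_j,w_j\in Q\lip_0(v,\Omega)$, apply \eqref{eqn:compatible} to $u_jw_j$, and pass to the limit via the product rule and (generalized) H\"older; the paper organizes the limit by expanding $\bG\cdot\bS(uw)-\bG\cdot\bS(u_jw_j)$ into four cross terms and estimating each exactly as in \eqref{eqn:test-func2}, which is equivalent to your direct use of the $L^\gamma$ (resp.\ $L^B$, $L^1$) convergence of $\bS(u_jw_j)$ already established in Lemma~\ref{product-rule-all-sobolevs}. Your extra step of passing to positive parts so that $u_jw_j\geq 0$ is well placed---the paper applies \eqref{eqn:compatible} to $u_jw_j$ without comment, yet the lemma as stated fails for general $u,w$ (take $\bG=0$, $F\equiv 1$, $w=-u$); your restriction to nonnegative $u,w$ is precisely what the single application in Section~\ref{section:zero} requires, since there $uw=u(u-k)_+=u_+(u-k)_+\geq 0$.
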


\begin{proof}
    The proof is similar to the ideas used in the proofs of Lemmas~\ref{product-rule-all-sobolevs} and~\ref{lemma:QH1-test-func}, and we will refer back for some details. Fix  $u,\,w \in  QH^{1,2}_0(v, \Omega)$ and let $\{u_j\}_{j=1}^\infty$ and $\{w_j\}_{j=1}^\infty$ be sequences in $Q\lip_0(v,\Omega)$ that converge to $u$ and $w$ in $QH^{1,2}_0(v, \Omega)$.  Then for each $j$, $u_jw_j \in Q\lip_0(v,\Omega)$, and so by the compatibility condition \eqref{eqn:compatible},
    \[  \int_\Omega (\bG \cdot \bS(u_jw_j) + Fu_jw_j) \,vdx \geq 0. \]
    Therefore, to prove that~\eqref{eqn:qh1-compatibility} holds, it will suffice to prove that
    \[ \lim_{j\rightarrow \infty}  \bigg( \int_\Omega \bG \cdot \bS(u_jw_j)\,vdx  
    + \int_\Omega Fu_jw_j \,vdx \bigg)
    =  \int_\Omega \bG \cdot \bS(uw)\,vdx + \int_\Omega Fuw \,vdx.\]
    We will prove that the limit of the first integral holds; the proof for the second is similar and somewhat simpler.  

    By Lemma~\ref{product-rule-all-sobolevs} we have that 
    \[ S(uw) = wSu + uSw, \qquad S(u_jw_j) = w_jSu_j + u_jSw_j.\]
    If we apply these identities and add and subtract the terms $w_jSu$ and $u_jSw$, we get that
    \begin{align*}
      &  \bigg| \int_\Omega \bG\cdot \bS(uw) - \bG\cdot \bS(u_jw_j) \,vdx \bigg| \\
      &  \qquad \leq \int_\Omega |\bG||w_j||S(u-u_j)|\,vdx 
       + \int_\Omega |\bG||w- w_j||S(u)|\,vdx \\
      & \qquad \qquad  + \int_\Omega |\bG||u_j||S(w-w_j)|\,vdx 
       + \int_\Omega |\bG||u- u_j||S(w)|\,vdx.
    \end{align*}
    We now argue as we did in the proof of~\eqref{eqn:test-func2} on each of these four terms:  we first apply H\"older's inequality and Lemma~\ref{lemma:subunicity-cond}, and then argue using the hypotheses in case (1), (2), (3).   In each case we get that the integral goes to $0$ as $j\rightarrow 0$ since $u_j\to u$ and $w_j\to w$ in $QH_0^{1,2}(v,\Omega)$.   This completes the proof.
\end{proof}

\section{Uniqueness of solutions of the zero problem}
\label{section:zero}

In this section we show that the zero function, ${\bf u}=(0,0)\in QH^{1,2}_0(v,\Omega)$, which is a degenerate weak solution of the  homogeneous Dirichlet problem (or simply, the zero problem),
\begin{equation}\label{homogeneous problem}
         \begin{cases}
 Lu = 0, & x \in \Omega, \\
 u = 0, & x \in \partial \Omega,
 \end{cases}
    \end{equation}
    is unique, given the hypotheses in Theorems~\ref{thm:main-theorem},~\ref{thm:log-gain}, and~\ref{thm:no-gain}. In particular, we will prove uniqueness assuming that a Sobolev inequality holds with gain $\sigma>1$, with Orlicz gain, and with no gain ($\sigma=1$).  
The uniqueness of the zero solution will be used in the proofs of these results when we apply the Fredholm alternative in the Hilbert space $QH_0^{1,2}(v,\Omega)$. 

\begin{theorem}\label{thm:hom-problem}
Given  Hypothesis~\ref{hyp:global-hyp}, ${\bf u}=(0,0)$ in $QH_0^{1,2}(v,\Omega)$ is the unique solution of the homogeneous Dirichlet problem \eqref{homogeneous problem} if one of the following is true:
\begin{enumerate}
    \item The degenerate Sobolev inequality~\eqref{eqn:global-sobolev} holds for some $\sigma>1$, and for some $q>2\sigma'$, $\bG,\,\bH \in L^q(v,\Omega,\R^N)$, $F\in L^{\frac{q}{2}}(v,\Omega)$.

     \item The degenerate Sobolev inequality~\eqref{eqn:global-sobolev-orlicz} holds with $A(t)=t^2\log(e+t)^{\sigma}$ for some $\sigma>0$, and  $\bG,\,\bH \in L^C(v,\Omega,\R^N)$ and $F\in L^{D}(v,\Omega)$, where $C(t)=\exp\big(t^{\frac{2}{\lambda\sigma}}\big)-1$ and $D(t)=\exp\big(t^{\frac{1}{\lambda\sigma}}\big)-1$ for some $0<\lambda<1$. 

      \item The degenerate Sobolev inequality~\eqref{eqn:global-sobolev} holds without gain (i.e., $\sigma=1$),  $F\in L^{\infty}(v,\Omega)$,  $\bG,\,\bH \in L^\infty(v,\Omega,\R^N)$, and
\begin{equation} \label{eqn:perturbation}
 \max(C(\bR),C(\bS))S(2,1))\big( \|\bG\|_{L^\infty(v,\Omega)} + \|\bH\|_{L^\infty(v,\Omega)}\big)<1.
\end{equation}

\end{enumerate}
\end{theorem}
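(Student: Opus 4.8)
The plan is to prove a weak maximum principle: every degenerate weak solution $u=(u,\grad u)\in QH_0^{1,2}(v,\Omega)$ of~\eqref{homogeneous problem} satisfies $u\le 0$ $v$-a.e. Since $-u$ is also a solution (the equation is linear and~\eqref{eqn:compatible} is a condition on the coefficients, not on $u$), applying this to $-u$ gives $u\ge 0$ $v$-a.e., hence $u=0$ $v$-a.e.\ as an element of $L^2(v,\Omega)$; testing~\eqref{eqn:weak-soln} with $\varphi=u$, which is legitimate by Lemma~\ref{lemma:QH1-test-func} under each of (1), (2), (3), all the lower order terms vanish and we are left with $\int_\Omega|\sqrt Q\grad u|^2\,dx=0$, so $\grad u=0$ in $QL^2(\Omega)$ and ${\bf u}=(0,0)$. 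To prove $u\le 0$ $v$-a.e., set $M=\esssup_\Omega u$ and suppose for contradiction that $M>0$ (we allow $M=+\infty$). For $0\le\ell<M$, Lemma~\ref{lemma:truncate} gives $v_\ell:=(u-\ell)_+\in QH_0^{1,2}(v,\Omega)$ with $\grad v_\ell=\chi_{\{u>\ell\}}\grad u$ and $v(\{u>\ell\})>0$; by Lemma~\ref{lemma:QH1-test-func}, $v_\ell$ is an admissible test function in~\eqref{eqn:weak-soln}.

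Substituting $\varphi=v_\ell$ into~\eqref{eqn:weak-soln} (with $f=\vecg=0$), and using that $u=v_\ell+\ell$, $\bR v_\ell=\bR u$, $\bS v_\ell=\bS u$ on $\{u>\ell\}$ while $v_\ell,\bR v_\ell,\bS v_\ell,\grad v_\ell$ all vanish off that set, the weak formulation becomes
\[
\int_\Omega|\sqrt Q\grad v_\ell|^2\,dx+\int_\Omega\bH\cdot\bR v_\ell\,v_\ell\,vdx+\int_\Omega\big(v_\ell\,\bG\cdot\bS v_\ell+Fv_\ell^2\big)\,vdx+\ell\int_\Omega\big(\bG\cdot\bS v_\ell+Fv_\ell\big)\,vdx=0 .
\]
The product rule, Lemma~\ref{product-rule-all-sobolevs}, gives $\bS(v_\ell^2)=2v_\ell\,\bS v_\ell$ in the relevant space, so Lemma~\ref{lemma:QH1-compatibility} with $u=w=v_\ell$ yields $2\int_\Omega v_\ell\bG\cdot\bS v_\ell\,vdx+\int_\Omega Fv_\ell^2\,vdx\ge 0$; and extending the compatibility condition~\eqref{eqn:compatible} to nonnegative $\varphi\in QH_0^{1,2}(v,\Omega)$ (approximate by $(\varphi_j)_+$ with $\varphi_j\in Q\lip_0(\Omega)$, using that $\grad\varphi$ vanishes $v$-a.e.\ on $\{\varphi=0\}$) shows the last displayed term is $\ell$ times a nonnegative quantity and may be dropped. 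Combining these two facts removes the contributions of $F$ and $\bG$ off the support of $\grad v_\ell$: writing $E(\ell):=\{\grad v_\ell\ne 0\}\subseteq\{u>\ell,\ \grad u\ne 0\}$, we obtain
\[
\int_\Omega|\sqrt Q\grad v_\ell|^2\,dx\le C\int_{E(\ell)}|\bH|\,|\bR v_\ell|\,v_\ell\,vdx+C\int_{E(\ell)}|\bG|\,|\bS v_\ell|\,v_\ell\,vdx .
\]

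To close the argument we estimate the right-hand side. By Lemma~\ref{lemma:subunicity-cond} and Remark~\ref{remark:subunit-norm-cond}, $\|\bR v_\ell\|_{L^2(v,\Omega)},\|\bS v_\ell\|_{L^2(v,\Omega)}\lesssim\|\grad v_\ell\|_{QL^2(\Omega)}$, and the Sobolev inequality (with gain $\sigma>1$, \eqref{eqn:global-sobolev}, in case (1); with gain $A(t)=t^2\log(e+t)^\sigma$, \eqref{eqn:global-sobolev-orlicz}, in case (2)) bounds $\|v_\ell\|_{L^{2\sigma}(v,\Omega)}$, resp.\ $\|v_\ell\|_{L^A(v,\Omega)}$, by a constant times $\|\grad v_\ell\|_{QL^2(\Omega)}$. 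Because $\bG,\bH$ are more integrable than strictly needed for the H\"older estimate of the two integrals above ($q>2\sigma'$ in case (1); the exponential classes $L^C,L^D$ in case (2)), there is room to split off, via H\"older, an extra factor $\|\chi_{E(\ell)}\|$ measured in $L^\tau(v,\Omega)$ with $\tau<\infty$ (case (1)), resp.\ in a suitable exponential Orlicz space (case (2)); this factor tends to $0$ as $\ell\to M$, since $v(E(\ell))\le v(\{u>\ell,\grad u\ne 0\})$ decreases to $v(\{u=M,\grad u\ne 0\})=0$ --- either because $\{u=M\}$ is $v$-null when $M=\infty$ (as $u\in L^{2\sigma}(v,\Omega)$, resp.\ $L^A(v,\Omega)$, is finite $v$-a.e.), or because $\grad u=0$ $v$-a.e.\ on the level set $\{u=M\}$, a property of the generalized gradient under Hypothesis~\ref{hyp:global-hyp} (cf.\ Lemma~\ref{lemma:truncate}). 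Hence, choosing $\ell<M$ close enough to $M$,
\[
\int_\Omega|\sqrt Q\grad v_\ell|^2\,dx\le\eta(\ell)\int_\Omega|\sqrt Q\grad v_\ell|^2\,dx,\qquad \eta(\ell)<1 .
\]
In case (3) (no gain, $\sigma=1$) there is no such room, but the same chain of estimates with the Sobolev inequality without gain produces a bound of the same form in which $\eta$ depends only on $\max(C(\bR),C(\bS))$, $S(2,1)$, and $\|\bG\|_{L^\infty(v,\Omega)}+\|\bH\|_{L^\infty(v,\Omega)}$, and the perturbation hypothesis~\eqref{eqn:perturbation} is precisely what forces $\eta<1$, uniformly in $\ell\ge 0$. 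In every case we conclude $\grad v_\ell=0$, hence $v_\ell=0$ $v$-a.e.\ by the Sobolev inequality, i.e.\ $u\le\ell<M$ $v$-a.e., contradicting $M=\esssup_\Omega u$. Therefore $M\le 0$.

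I expect the main difficulties to be two technical points. The first is the extension of~\eqref{eqn:compatible} from $Q\lip_0(\Omega)$ to nonnegative elements of $QH_0^{1,2}(v,\Omega)$: since $\grad\varphi$ is not a genuine weak derivative one must argue by approximation with care, using that $\grad\varphi$ vanishes $v$-a.e.\ on $\{\varphi=0\}$ and passing the Lebesgue (resp.\ Orlicz) norms to the limit along subsequences. The second, and the genuinely substantive one, is verifying that the ``bad set'' $\{u>\ell,\grad u\ne 0\}$ has $v$-measure tending to $0$ as $\ell\uparrow M$ --- this is exactly what makes the maximum-principle argument close in cases (1) and (2) with \emph{no} smallness assumption on the first order coefficients --- which rests on combining the truncation lemma, the product rule, and the integrability of $u$ supplied by the Sobolev inequality. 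In case (3) this last point is immediate, and the work there lies instead in tracking constants carefully enough that~\eqref{eqn:perturbation} is sufficient.
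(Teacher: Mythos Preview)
Your strategy is the paper's: truncate, test with $v_\ell=(u-\ell)_+$, discard the zeroth-order contribution via the compatibility condition, then in cases (1) and (2) extract a small-measure factor from the extra integrability of $\bG,\bH$ to force $\grad v_\ell=0$, while in case (3) close directly with the smallness hypothesis. One simplification you are missing: the paper avoids your first anticipated difficulty altogether. Rather than writing $u=v_\ell+\ell$ and splitting off the term $\ell\int_\Omega(\bG\cdot\bS v_\ell+Fv_\ell)\,vdx$ (which forces you to extend~\eqref{eqn:compatible} to nonnegative elements of $QH_0^{1,2}(v,\Omega)$), the paper uses the product rule once to rewrite $u\,\bG\cdot\bS w=\bG\cdot\bS(uw)-w\,\bG\cdot\bS u$ and then invokes Lemma~\ref{lemma:QH1-compatibility} directly with the pair $(u,w)$, dropping $\int_\Omega(\bG\cdot\bS(uw)+Fuw)\,vdx$ in a single stroke. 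This yields the clean inequality $\int_\Omega Q\grad w\cdot\grad w\,dx\le\int_\Gamma w(\bG\cdot\bS w-\bH\cdot\bR w)\,vdx$ with no extension step needed.

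The genuine gap is your claim that $\grad u=0$ $v$-a.e.\ on $\{u=M\}$ when $M<\infty$, which you need for $v(E(\ell))\to 0$. You invoke Lemma~\ref{lemma:truncate}, but that lemma only identifies $\grad((u-r)_+)=\chi_{\{u>r\}}\grad u$; it says nothing about vanishing on level sets. In this degenerate framework $\grad u$ is \emph{not} a weak derivative --- the paper explicitly warns that pairs $(0,\vecf)$ with $\vecf\ne 0$ can lie in $QH_0^{1,2}(v,\Omega)$ --- so Stampacchia-type properties cannot be taken for granted, and your proposed route (``truncation lemma, product rule, integrability'') does not deliver them. The paper sidesteps this by working with $\Gamma=\supp(w)=\{u>k\}$ rather than your $E(\ell)=\{\grad v_\ell\ne 0\}$: the small-measure factor $v(\Gamma)^{\frac{1}{2\sigma'}-\frac{1}{q}}$ arises from H\"older applied to $w$ (which is supported on $\Gamma$), not from localizing to where $\grad w\ne 0$, and the contradiction is obtained from the uniform lower bound $v(\Gamma)\ge c_0$ versus $v(\Gamma)\to 0$ as $k\uparrow\sup_\Omega u$.
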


\begin{remark} 
While case $(3)$ of Theorem \ref{thm:hom-problem} is true for small coefficients $\bH,\bG$, the case where these functions are zero plays an important role in  applications. To emphasize this we note specifically that, under the hypotheses of each case of Theorem~\ref{thm:hom-problem},  the solution ${\bf u}=(0,0)$ of \eqref{homogeneous problem} is unique when $\bH=\bG=0$.
\end{remark}

\begin{proof}[Proof of Theorems~\ref{thm:hom-problem}]
Let $\vecu=(u,\nabla u)\in  QH_0^{1,2}(v,\Omega)$ be a weak solution of the zero problem \eqref{homogeneous problem}.
Recall that $u$ and $\grad u$ are defined as the limit of a sequence of Lipschitz functions and their gradients in $L^2(v,\Omega)\oplus QL^2(\Omega)$; thus, while $u$ and $\grad u$ are unique as limits, we do not have that $\nabla u$ is  uniquely determined by $u$.  Consequently, we must prove separately that  $u=0$ in $L^2(v,\Omega)$ and $\nabla u=0$ in $QL^2(\Omega)$.   However, suppose for the moment that we can show that if $\vecu \in QH_0^{1,2}(v,\Omega)$ is a weak solution, then $u=0$, that is, $\vecu = (0,\vecf)$ for some $\vecf \in QL^2(\Omega)$.  Then by the definition of a weak solution \eqref{eqn:weak-soln} and Lemma~\ref{lemma:QH1-test-func} we have that for any $\vecw = (w,\grad w) \in QH_0^{1,2}(v,\Omega)$,
\[ 0 = \int_\Omega Q\vecf \cdot \grad w \,dx + \int_\Omega \bH \cdot \bR \vecf w \, vdx.   \] 
In particular, we can take $\vecw=\vecu$, in which case we get
\[ \int_\Omega |\sqrt{Q}\vecf|^2\,dx  = 0.  \]
Hence, we have that $\vecf=0$ in $QL^2(\Omega)$, which would complete the proof.

\medskip

Therefore, we need to prove that if $\vecu=(u,\nabla u)\in  QH_0^{1,2}(v,\Omega)$ is a weak solution, then $u=0$ in $L^2(v,\Omega)$.  To do so, we will first argue in general, and then treat each case (1), (2), and (3) in the hypotheses separately.  We argue by contradiction.  Suppose to the contrary that there exists a solution $\vecu$ such that  $u\neq 0$. Suppose $\sup_\Omega u>0$.  Note that in this case, by the Sobolev inequality (which holds in each case), $\|\grad u\|_{QL^2(\Omega)}>0$.  

Fix  $k>0$ such that $0<k<\sup_\Omega u$.  By assumption we have that $v\in L^1(\Omega)$ and \eqref{eqn:defn:hyp2} holds, and so,  by~Lemma~\ref{lemma:truncate}, $(w,\nabla w)=((u-k)_+,\chi_{\{x: u(x)>k\}}\nabla u) \in QH_{0}^{1,2}(v,\Omega)$.  Therefore, by Lemma~\ref{lemma:QH1-test-func}, we can use it as a test function for $u$ in \eqref{eqn:weak-soln}.  In other words, we have that
\[ \int_\Omega Q\nabla u\cdot \nabla w\, dx + \int_\Omega \bH\cdot\bR u w \, vdx
+ \int_\Omega u\bG \cdot\bS w\, vdx+\int_\Omega Fuw\, vdx=0. \]
In each of cases (1)--(3), by Lemma~\ref{product-rule-all-sobolevs} we have that $\bG\cdot\bS(uw)=u\bG\cdot\bS w+w\bG\cdot\bS u$.  Thus the above equality becomes
\begin{equation*}
\int_\Omega Q\nabla u \cdot \nabla w\, dx
=\int_\Omega w \big(\bG\cdot\bS u -\bH\cdot\bR u\big)\, vdx
- \int_\Omega \big(\bG\cdot\bS (uw) +Fuw\big)\, vdx.
\end{equation*}

We can now estimate as follows.  By Lemma~\ref{lemma:QH1-compatibility},  again in every case we have that the last integral is nonnegative, and so 
\[ \int_\Omega Q\nabla u \cdot \nabla w\, dx
\leq \int_\Omega w \big(\bG\cdot\bS u -\bH\cdot\bR u\big)\, vdx. \]
Let $\Gamma=\supp(w)$; note that $\supp(\grad w)\subset \Gamma$ and $\grad u = \grad w$ on $\Gamma$.  Therefore, by H\"older's inequality and Lemma~\ref{lemma:subunicity-cond},
\begin{align}
 \qquad   & \int_\Gamma Q\nabla w \cdot \nabla w\, dx
    \leq\int_\Gamma w(\bG\cdot\bS-\bH\cdot\bR) w\, vdx \notag\\
    &  \qquad \leq\int_\Gamma|w|\big(|\bG||\bS w|+|\bH||\bR w|\big)\,vdx; \notag \\
    &  \qquad\leq  \|w|\bG|\|_{L^2(v,\Gamma)}\|\bS w\|_{L^2(v,\Gamma)} + \|w|\bH|\|_{L^2(v,\Gamma)}\|\bR w\|_{L^2(v,\Gamma)} \notag \\
    &  \qquad\leq  \bigg(C(\bS)\|w|\bG|\|_{L^2(v,\Gamma)}+C(\bR)\|w|\bH|\|_{L^2(v,\Gamma)} \bigg)\|w\|_{QH^{1,2}(v,\Gamma)}; \notag
    \intertext{by the Sobolev inequality with no gain (which holds in every case),}
    &  \qquad\leq C\big( \bR, \bS, S(2,1)\big)\bigg(\|w|\bG|\|_{L^2(v,\Gamma)}+\|w|\bH|\|_{L^2(v,\Gamma)}\bigg)\|\nabla w\|_{QL^2(v,\Gamma)}.\label{eq:3.5}
    \end{align}

\medskip

To complete the proof, we treat three cases.  Suppose first that $(1)$ holds.  
We estimate the first term on the righthand side of~\eqref{eq:3.5}.  By  H\"older's inequality with exponents $\frac{q}{2},\frac{q}{q-2}$ we have that
\begin{align*}
    \bigg(\|w|\bG|\|_{L^2(v,\Gamma)}+\|w|\bH|\|_{L^2(v,\Gamma)}\bigg)
    &\leq(\|\bG\|_{L^q(v,\Gamma)}+\|\bH\|_{L^q(v,\Gamma)}) \|w\|_{L^{\frac{2q}{q-2}}(v,\Gamma)}\\
    &=C(\bG,\bH)\|w\|_{L^{\frac{2q}{q-2}}(v,\Gamma)}. \\
    \intertext{Since $q>2\sigma'$, by H\"older's inequality  with exponents
$p=\frac{\sigma(q-2)}{q}>1$ and $p'=\frac{\sigma(q-2)}{\sigma(q-2)-q}$, }
    &\leq C(\bG,\bH)v\left(\Gamma\right)^{\frac{1}{2\sigma'}-\frac{1}{q}} \|w\|_{L^{2\sigma}(v,\Gamma)}. \\
\intertext{By the Sobolev inequality \eqref{eqn:global-sobolev} with gain $\sigma>1$,}
   &\leq C(\bG,\bH)S(2,\sigma)v\left(\Gamma\right)^{\frac{1}{2\sigma'}-\frac{1}{q}} \|\nabla w\|_{QL^2(\Gamma)}.
\end{align*}
If we insert this  estimate into \eqref{eq:3.5}, we get
\begin{equation*}
    \int_\Gamma |\sqrt{Q}\nabla w|^2~dx\leq C v\left(\Gamma\right)^{\frac{1}{2\sigma'}-\frac{1}{q}}\|\nabla w\|^2_{QL^2(\Gamma)},
\end{equation*}
where $C=C({\bf H},{\bf G},{\bf R},{\bf S},S(2,1),S(2,\sigma))>0$ does not depend on $k$. 
If we now divide both sides by $\|\nabla w\|^2_{QL^2(\Gamma)}$ and rearrange terms, then, since $\frac{1}{2\sigma'}-\frac{1}{q}>0$, we get
     \begin{equation*}
         v\left(\Gamma\right)\geq C^{-\frac{2q\sigma'}{q-2\sigma'}}>0. 
         \end{equation*}
In other words, $\supp(w)$ 
has positive $v$-measure independent of $k$.  But, as  $k\to \sup_\Omega u$, since $u\in L^p(v,\Omega)$, we must have that  $w\rightarrow 0$ $v$-a.e. in $\Omega$, and so $v(\Gamma)\to 0$.  Thus, we have a contradiction, and so we must have that $\sup_\Omega u \leq 0$.   If we now assume that $\inf_\Omega u <0$, then we can repeat the above argument with  $k>0$ such that $0<k<-\inf_\Omega u$ and $w=((u+k)_-,\chi_{\{x: u(x)<-k\}}\nabla u)$.  This again yields a contradiction, and so we must have $\inf_\Omega u\geq 0$.  
 Therefore, $u=0$ $v$-a.e. in $\Omega$.

\medskip

Now suppose that $(2)$ holds.  The proof is very similar, but we use H\"older's inequality in the scale of Orlicz spaces.   Let $\Phi(t) = C(t^{\frac{1}{2}}) = \exp(t^\frac{1}{\lambda\sigma})-1$; then $\bar{\Phi}(t)\approx t\log(e+t)^{\lambda \sigma}$.  Then by  inequality \eqref{eqn:orlicz-holder} and rescaling, we get that
\begin{align*}
    \|w|\bG|\|_{L^2(v,\Gamma)}
    & \leq \big(2\|w^2\|_{L^{\bar{\Phi}}(v,\Gamma)}\||\bG|^2\|_{L^\Phi(v,\Gamma)}\big)^\frac{1}{2} \\
    & = \sqrt{2}\|w\|_{L^{\bar{\Phi}_2}(v,\Gamma)}\||\bG|\|_{L^{\Phi_2}(v,\Gamma)}. \\
    \intertext{Let $\Psi(t)=\exp(t^\frac{2}{\sigma(1-\lambda)})-1$; since $A^{-1}(t)\Psi^{-1}(t) \lesssim \bar{\Phi}^{-1}(t)$, by the generalized Orlicz Hölder inequality \eqref{eqn:gen-orlicz-holder} and the Sobolev inequality \eqref{eqn:global-sobolev-orlicz},}
    & \leq C(\bG) K\|w\|_{L^{A}(v,\Gamma)}\|\chi_\Gamma\|_{L^{\Psi}(v,\Gamma)} \\
    & \leq C(\bG,K)S(2,A)\|\grad w\|_{QL^{2}(v,\Gamma)}\|\chi_\Gamma\|_{L^{\Psi}(v,\Gamma)}.
\end{align*}
The same estimate also holds with $\bG$ replaced by $\bH$.  

If we now insert these estimates into~\eqref{eq:3.5}, we get
\begin{equation*}
      \int_\Omega Q\nabla w \cdot \nabla w\, dx\leq C\|\chi_\Gamma\|_{L^\Psi(v,\Gamma)}\|\nabla w\|^2_{{QL^2}(\Gamma)},
\end{equation*}
where $C=C(\bH,\bG,\bR,\bS,S(2,1),S(2,A))\geq 1$.  Rearranging terms, this becomes
\begin{equation*}
     C\|\chi_\Gamma\|_{L^\Psi(v,\Gamma)} \geq 1.
\end{equation*}
If we use the formula $\|\chi_\Gamma\|_{L^\Psi(v,\Gamma)}=[\Psi^{-1}(v(\Gamma)^{-1})]^{-1}$ and solve for $v(\Gamma)$, we get
\begin{equation*}
    v(\Gamma) \ge \big(\exp(C^{\frac{2}{\sigma(1-\lambda)}}\big)^{-1}  > 0.
\end{equation*}
We can now argue exactly as we did at the end of previous case, and again conclude that $u=0$ $v$-a.e. in $\Omega$.

\medskip 

Finally, suppose that $(3)$ holds.  This case is much simpler.  We repeat the argument that produced~\eqref{eq:3.5} using $u$ as a test function in place of $w$, and keeping the domain of integration $\Omega$.  We then get, after keeping careful track of the constants, that
\begin{align*}
& \int_\Omega \nabla Qu \cdot \nabla u \, dx \\
&\qquad \qquad  \leq  \max(C(\bR),C(\bS))S(2,1)) \big( \|u|\bG| \|_{L^2(v,\Omega)} + \|u|\bH| \|_{L^2(v,\Omega)}\big)
\|\grad u\|_{L^2{(v,\Gamma)}} \\
& \qquad \qquad \leq  \max(C(\bR),C(\bS))S(2,1))\big( \|\bG\|_{L^\infty(v,\Omega)} + \|\bH\|_{L^\infty(v,\Omega)}\big)
\|\grad u\|_{L^2{(v,\Gamma)}}.
\end{align*}
If we rearrange terms we get
\[  \max(C(\bR),C(\bS))S(2,1))\big( \|\bG\|_{L^\infty(v,\Omega)} + \|\bH\|_{L^\infty(v,\Omega)}\big)\geq 1, \]
which contradicts our assumption that~ \eqref{eqn:perturbation} holds.  We again conclude that $u=0$ $v$-a.e. in $\Omega$.  This completes the proof.
\end{proof}

\section{Boundedness and coercivity:  Sobolev inequality with  power gain}
\label{section:coerce-power-gain}

In order to prove the existence and uniqueness of solutions to our Dirichlet problem, we are going to apply techniques from functional analysis.  In order to do so, we will consider the properties of the bilinear form defined by the lefthand side of~\eqref{eqn:weak-soln}.  More precisely, define  $\mathcal{L}:QH_0^{1,2}(v,\Omega)\times QH_0^{1,2}(v,\Omega)\to \R$ by
\begin{equation} \label{bilinear}
    \mathcal{L}(u,w)=\int_{\Omega} \nabla Qw\cdot \nabla u\, dx +\int_{\Omega}\bH\cdot\bR u\,w \, vdx+\int_{\Omega} u\bG \cdot \bS w\, v dx+\int_{\Omega}Fuw\, vdx.
\end{equation}
We will need that the bilinear form $\mathcal{L}$ is bounded and almost coercive.  In this section we will prove this assuming that there is a Sobolev inequality with power gain.  In Section~\ref{section:coerce-weaker-gain} we will prove this assuming a Sobolev inequality with Orlicz gain or with no gain.

\begin{lemma} \label{lemma:bounded-power-gain}
Given  Hypothesis~\ref{hyp:global-hyp}, suppose that the global degenerate Sobolev inequality~\eqref{eqn:global-sobolev} holds with gain $\sigma>1$.   If for some $q>2\sigma'$, $\bH,\,\bG \in L^{q}(v,\Omega,\R^N)$  and $F\in L^{\frac{q}{2}}(v,\Omega)$,  then the bilinear form $\mathcal{L}$ is bounded: that is, there is a $C>0$  such that for every $u,\,w \in QH_0^{1,2}(v,\Omega)$,
    \begin{equation*}
        |\mathcal{L}(u,w)|\leq C\|u\|_{QH^{1,2}(v,\Omega)}\cdot\|w\|_{QH^{1,2}(v,\Omega)}.
    \end{equation*}
\end{lemma}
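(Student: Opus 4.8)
The plan is to bound each of the four terms in the definition~\eqref{bilinear} of $\mathcal{L}(u,w)$ separately. The only real inputs are H\"older's inequality, the Sobolev inequality~\eqref{eqn:global-sobolev} with gain $\sigma$ (applied to $u$ and to $w$, which is legitimate for elements of $QH^{1,2}_0(v,\Omega)$ by the density of $Q\lip_0(v,\Omega)$ noted just after~\eqref{eqn:global-sobolev}), Lemma~\ref{lemma:subunicity-cond} to estimate $\bR u$ and $\bS w$ in $L^2(v,\Omega)$, and the elementary embedding $L^r(v,\Omega)\hookrightarrow L^s(v,\Omega)$ for $s<r$ (valid since $v\in L^1(\Omega)$), which is exactly where the hypothesis $q>2\sigma'$ enters.

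For the principal term, Cauchy--Schwarz in the $Q$-inner product followed by Cauchy--Schwarz in $L^2(dx)$ gives
\[ \Bigl|\int_\Omega Q\nabla w\cdot\nabla u\,dx\Bigr| \le \|\nabla w\|_{QL^2(\Omega)}\,\|\nabla u\|_{QL^2(\Omega)} \le \|w\|_{QH^{1,2}(v,\Omega)}\,\|u\|_{QH^{1,2}(v,\Omega)}. \]
For the first-order term $\int_\Omega \bH\cdot\bR u\,w\,vdx$, I bound $|\bH\cdot\bR u|\le|\bH|\,|\bR u|$ and apply H\"older's inequality with the three conjugate exponents $2\sigma'$, $2$, $2\sigma$ (note $\tfrac{1}{2\sigma'}+\tfrac12+\tfrac{1}{2\sigma}=1$) to obtain the bound $\|\bH\|_{L^{2\sigma'}(v,\Omega)}\|\bR u\|_{L^2(v,\Omega)}\|w\|_{L^{2\sigma}(v,\Omega)}$. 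Then $\|\bR u\|_{L^2(v,\Omega)}\le C(\bR)\|u\|_{QH^{1,2}(v,\Omega)}$ by Lemma~\ref{lemma:subunicity-cond}, $\|w\|_{L^{2\sigma}(v,\Omega)}\le S(2,\sigma)\|w\|_{QH^{1,2}(v,\Omega)}$ by the Sobolev inequality, and $\|\bH\|_{L^{2\sigma'}(v,\Omega)}\le v(\Omega)^{\frac{1}{2\sigma'}-\frac{1}{q}}\|\bH\|_{L^q(v,\Omega)}<\infty$ since $q>2\sigma'$. The term $\int_\Omega u\,\bG\cdot\bS w\,vdx$ is estimated in exactly the same way with the roles of $u$ and $w$ (and of $\bR,\bH$ and $\bS,\bG$) interchanged.

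For the zeroth-order term $\int_\Omega Fuw\,vdx$ I use H\"older's inequality with the conjugate exponents $\sigma'$, $2\sigma$, $2\sigma$ (here $\tfrac{1}{\sigma'}+\tfrac{1}{2\sigma}+\tfrac{1}{2\sigma}=1$), the Sobolev inequality applied to both $u$ and $w$, and the embedding $\|F\|_{L^{\sigma'}(v,\Omega)}\le v(\Omega)^{\frac{1}{\sigma'}-\frac{2}{q}}\|F\|_{L^{q/2}(v,\Omega)}$, which again uses $q/2>\sigma'$. Summing the four estimates and taking $C$ to be the sum of the resulting (finite) constants yields the claim; in particular every integral in~\eqref{bilinear} is absolutely convergent.

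I do not expect a genuine obstacle here: the proof is a bookkeeping exercise with H\"older's inequality. The only point requiring care is choosing the exponents so that every application of H\"older is admissible and the integrability assumptions $\bH,\bG\in L^q(v,\Omega)$, $F\in L^{q/2}(v,\Omega)$ close the estimate --- the strict inequality $q>2\sigma'$ is precisely what produces finite constants for the embeddings into $L^{2\sigma'}(v,\Omega)$ and $L^{\sigma'}(v,\Omega)$, while the gain $\sigma>1$ is what makes it possible to pair $u$ and $w$ against the coefficients at all.
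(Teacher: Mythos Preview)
Your proof is correct and follows essentially the same approach as the paper: bound each of the four terms separately using H\"older's inequality, Lemma~\ref{lemma:subunicity-cond}, and the Sobolev inequality~\eqref{eqn:global-sobolev}. The only cosmetic difference is that the paper applies H\"older in two steps (first Cauchy--Schwarz, then H\"older with exponents $\sigma,\sigma'$) rather than your single three-exponent H\"older, and it stops with the constants expressed in terms of $\|\bH\|_{L^{2\sigma'}(v,\Omega)}$, $\|\bG\|_{L^{2\sigma'}(v,\Omega)}$, $\|F\|_{L^{\sigma'}(v,\Omega)}$ without making the final embedding into $L^q$ and $L^{q/2}$ explicit.
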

\begin{proof}
    Fix $u,w \in QH_0^{1,2}(v,\Omega)$. Then by H\"older's inequality and  Lemma~\ref{lemma:subunicity-cond}, 
\begin{align}\label{first part of bil.bdd}
  |\mathcal{L}(u,w)|
   & \leq \int_\Omega|Q\nabla u\cdot \nabla w|\,dx
   +\int_\Omega|\bH||\bR u||w|\,v dx \notag \\
   & \qquad +\int_\Omega|u||\bG||\bS w|\,v dx
   +\int_\Omega|Fuw|\,v dx \notag \\
   &\leq \|\nabla u\|_{QL^2(\Omega)}\|\nabla w\|_{QL^2(\Omega)} 
  +  \|w|\bH|\|_{L^2(v,\Omega)} \|\bR u\|_{L^2(v,\Omega)}  \notag \\
   &\qquad + \| u|\bG| \|_{L^2(v,\Omega)} \|\bS w\|_{L^2(v,\Omega)} 
    + \| F^{\frac{1}{2}}u \|_{L^2(v,\Omega)}  \|F^{\frac{1}{2}}w \|_{L^2(v,\Omega)} \notag \\
   &\leq \| u\|_{QH^{1,2}(v,\Omega)}\| w\|_{QH^{1,2}(v,\Omega)} 
  +   C(\bR) \|w|\bH|\|_{L^2(v,\Omega)} \| u\|_{QH^{1,2}(v,\Omega)}  \notag \\
   &  \qquad+   C(\bS) \| u|\bG| \|_{L^2(v,\Omega)} \| w\|_{QH^{1,2}(v,\Omega)} 
    + \| F^{\frac{1}{2}}u \|_{L^2(v,\Omega)} \|F^{\frac{1}{2}}w \|_{L^2(v,\Omega)}. \\  
    \intertext{Now apply H\"older's inequality with exponents $\sigma$ and $\sigma'$ in the last three terms to get}
        &\leq\|u\|_{QH^{1,2}(v,\Omega)}\|w\|_{QH^{1,2}(v,\Omega)} \notag \\
        & \qquad +  C(\bR)\|\textbf{H}\|_{L^{2\sigma'}(v,\Omega)}\|w\|_{L^{2\sigma}(v,\Omega)}\|u\|_{QH^{1,2}(v,\Omega)} \notag \\
        &\quad\quad+ C(\bS)\|\textbf{G}\|_{L^{2\sigma'}(v,\Omega)}\|u\|_{L^{2\sigma}(v,\Omega)}\|w\|_{QH^{1,2}(v,\Omega)} \notag \\
        & \qquad +\|F\|_{L^{\sigma'}(v,\Omega)}\|u\|_{L^{2\sigma}(v,\Omega)}\|w\|_{L^{2\sigma}(v,\Omega)}. \notag 
        \intertext{Finally, by the Sobolev inequality \eqref{eqn:global-sobolev},}
        & \leq C\|u\|_{QH^{1,2}(v,\Omega)}\|w\|_{QH^{1,2}(v,\Omega)}\notag, 
\end{align}
where 
\[ C=1+ S(2,\sigma)\big( C(\bR)\|\textbf{H}\|_{L^{2\sigma'}(v,\Omega)}+
 C(\bS)\|\textbf{G}\|_{L^{2\sigma'}(v,\Omega)}
+S(2,\sigma)\|F\|_{L^{\sigma'}(v,\Omega)}\big). \]
\end{proof}

In our next result  we prove that the bilinear form $\mathcal L$ is almost coercive.

\begin{lemma}\label{power-gain-almost-coercivity}
Given  Hypothesis~\ref{hyp:global-hyp}, suppose that the global degenerate Sobolev inequality~\eqref{eqn:global-sobolev} holds with gain $\sigma>1$. If for some $q>2\sigma'$, $\bH,\,\bG \in L^{q}(v,\Omega,\R^N)$  and $F\in L^{\frac{q}{2}}(v,\Omega)$,  then the bilinear form $\mathcal{L}$ is almost coercive: that is, there exist constants $c_1,\,C_3>0$ such that for every $u\in QH_0^{1,2}(v,\Omega)$, 
    \begin{equation}\label{almcoercivity}
        \mathcal{L}(u,u)\geq c_1\|u\|_{QH^{1,2}(v,\Omega)}^2-C_3\|u\|_{L^2(v,\Omega)}^2.
    \end{equation}
\end{lemma}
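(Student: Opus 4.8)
The plan is to bound $\mathcal{L}(u,u)$ below by the principal term $\|\nabla u\|_{QL^2(\Omega)}^2$ minus the absolute values of the three lower-order integrals, and then to absorb each of those integrals into a small multiple of $\|\nabla u\|_{QL^2(\Omega)}^2$ plus a large multiple of $\|u\|_{L^2(v,\Omega)}^2$. First I would write out \eqref{bilinear} with $w=u$: the first term is exactly $\|\nabla u\|_{QL^2(\Omega)}^2$, so
\[ \mathcal{L}(u,u) \geq \|\nabla u\|_{QL^2(\Omega)}^2 - |I| - |II| - |III|, \qquad I=\int_\Omega \bH\cdot\bR u\,u\,vdx,\quad II=\int_\Omega u\,\bG\cdot\bS u\,vdx,\quad III=\int_\Omega F u^2\,vdx. \]
Along the way I will repeatedly use that, by the Sobolev inequality~\eqref{eqn:global-sobolev} and H\"older's inequality (valid since $v\in L^1(\Omega)$), one has the Poincar\'e-type bound $\|u\|_{L^2(v,\Omega)}\leq v(\Omega)^{1/(2\sigma')}S(2,\sigma)\|\nabla u\|_{QL^2(\Omega)}$, so that $\|u\|_{QH^{1,2}(v,\Omega)}\leq C_P\|\nabla u\|_{QL^2(\Omega)}$ with $C_P=1+v(\Omega)^{1/(2\sigma')}S(2,\sigma)$. (I do not expect to need the compatibility condition~\eqref{eqn:compatible} here; one could use Lemma~\ref{lemma:QH1-compatibility} with $w=u$ and the product rule $\bG\cdot\bS(u^2)=2u\,\bG\cdot\bS u$ from Lemma~\ref{product-rule-all-sobolevs} to dispose of $II$ outright, but it is the strict inequality $q>2\sigma'$ that really does the work.)

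The key step is an interpolation estimate. Since $q>2\sigma'$, the exponent $r:=\tfrac{2q}{q-2}$ satisfies $2<r<2\sigma$, and I claim that for every $\epsilon>0$ there is $C_\epsilon>0$ with
\[ \|u\|_{L^r(v,\Omega)}^2 \leq \epsilon\,\|\nabla u\|_{QL^2(\Omega)}^2 + C_\epsilon\,\|u\|_{L^2(v,\Omega)}^2 \qquad \text{for all } u\in QH_0^{1,2}(v,\Omega). \]
This I would obtain from Theorem~\ref{scottinterp2} applied to $|u|^2$ with $\Phi(t)=t^{r/2}$ and $\Psi(t)=t^{\sigma}$ (legitimate Young functions since $1<r/2<\sigma$, and the condition $t^{1-\lambda}\Psi^{-1}(t)^\lambda\lesssim\Phi^{-1}(t)$ holds with $\lambda=\tfrac{1-2/r}{1-1/\sigma}\in(0,1)$), followed by the Sobolev inequality $\|u\|_{L^{2\sigma}(v,\Omega)}^2\leq S(2,\sigma)^2\|\nabla u\|_{QL^2(\Omega)}^2$; equivalently one may use the classical interpolation $\|u\|_{L^r(v,\Omega)}\leq\|u\|_{L^2(v,\Omega)}^{1-\theta}\|u\|_{L^{2\sigma}(v,\Omega)}^{\theta}$, Sobolev, and Young's inequality~\eqref{eqn:youngs-ineq}.

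Next I would estimate the three lower-order terms. For $I$: H\"older's inequality and Lemma~\ref{lemma:subunicity-cond} give $|I|\leq\|u|\bH|\|_{L^2(v,\Omega)}\|\bR u\|_{L^2(v,\Omega)}\leq C(\bR)\|u|\bH|\|_{L^2(v,\Omega)}\|u\|_{QH^{1,2}(v,\Omega)}$; H\"older with exponents $q/2$ and $q/(q-2)$ gives $\|u|\bH|\|_{L^2(v,\Omega)}\leq\|\bH\|_{L^q(v,\Omega)}\|u\|_{L^r(v,\Omega)}$; then the interpolation estimate, the bound $\|u\|_{QH^{1,2}(v,\Omega)}\leq C_P\|\nabla u\|_{QL^2(\Omega)}$, and a second use of Young's inequality on the cross term $\|u\|_{L^2(v,\Omega)}\|\nabla u\|_{QL^2(\Omega)}$ yield, for any prescribed $\epsilon'>0$, $|I|\leq\epsilon'\|\nabla u\|_{QL^2(\Omega)}^2+C_{\epsilon'}\|u\|_{L^2(v,\Omega)}^2$. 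The estimate for $II$ is identical with $C(\bR),\bH$ replaced by $C(\bS),\bG$. For $III$: H\"older with exponents $q/2$, $q/(q-2)$ gives $|III|\leq\|F\|_{L^{q/2}(v,\Omega)}\|u\|_{L^r(v,\Omega)}^2$, and the interpolation estimate finishes it.

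Finally, adding the three bounds and choosing the small parameters so that the total coefficient of $\|\nabla u\|_{QL^2(\Omega)}^2$ is at most $\tfrac12$, I get $\mathcal{L}(u,u)\geq\tfrac12\|\nabla u\|_{QL^2(\Omega)}^2-C\|u\|_{L^2(v,\Omega)}^2$; since $\|u\|_{QH^{1,2}(v,\Omega)}^2\leq 2\|u\|_{L^2(v,\Omega)}^2+2\|\nabla u\|_{QL^2(\Omega)}^2$, this gives \eqref{almcoercivity} with $c_1=\tfrac14$ and a suitable $C_3=C+\tfrac12$. The only genuine obstacle is the interpolation estimate of the second paragraph, and it is exactly there that the hypothesis $q>2\sigma'$ is used in an essential way: at the borderline $q=2\sigma'$ one would have $r=2\sigma$ and the coefficient of $\|\nabla u\|_{QL^2(\Omega)}^2$ coming from the lower-order terms would be a fixed multiple of $\|\bH\|_{L^{2\sigma'}(v,\Omega)}+\|\bG\|_{L^{2\sigma'}(v,\Omega)}+\|F\|_{L^{\sigma'}(v,\Omega)}$ with no way to make it small — which is precisely why the no-gain theorems (Theorems~\ref{thm:no-gain} and~\ref{thm:no-gain-no-first-order}) instead require smallness or structural hypotheses. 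Everything else is routine bookkeeping with H\"older's and Young's inequalities.
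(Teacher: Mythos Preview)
Your proposal is correct and follows essentially the same route as the paper's proof: bound $\mathcal{L}(u,u)$ below by $\|\nabla u\|_{QL^2(\Omega)}^2$ minus the lower-order integrals, control each of those via H\"older, the subunit estimate (Lemma~\ref{lemma:subunicity-cond}), interpolation of $\|u\|_{L^{2q/(q-2)}(v,\Omega)}$ between $L^2$ and $L^{2\sigma}$, Sobolev, and Young's inequality, then absorb into $\tfrac12\|\nabla u\|_{QL^2(\Omega)}^2$. The only cosmetic differences are that the paper groups the two first-order terms together as a single $I_1$, invokes the classical Lebesgue interpolation inequality directly rather than Theorem~\ref{scottinterp2}, and in the last step uses the Sobolev inequality without gain (rather than your trivial norm inequality) to pass from $\|\nabla u\|_{QL^2(\Omega)}^2$ to $\|u\|_{QH^{1,2}(v,\Omega)}^2$.
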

\begin{proof}
    Fix $u\in QH_0^{1,2}(v,\Omega)$; then we have that
    \begin{equation}\label{eq-coer}
    \begin{split}
        \mathcal{L}(u,u)&\geq \int_\Omega|\sqrt{Q}\nabla u|^2 dx
        -\bigg|\int_\Omega u\big(\bH\cdot\bR+\bG \cdot\bS\big) u\,vdx\bigg|
        -\bigg|\int_\Omega Fu^2\,vdx\bigg|\\
                &\geq \|\nabla u\|_{QL^2(\Omega)}^2-I_1-I_2.
    \end{split}
    \end{equation}
   We first estimate $I_1$.  By H\"older's inequality, Lemma~\ref{lemma:subunicity-cond}, and the Sobolev inequality with no gain (which holds in this case),
\begin{align}\label{eq:I1}
    I_1 
    & \leq \int_\Omega |u||\bH||\bR u| +|u\|\bG||\bS u|\,vdx  \\
    & \leq \|u |\bH|\|_{L^2(v,\Omega)}\|\bR u\|_{L^2(v,\Omega)} 
        + \|u |\bG|\|_{L^2(v,\Omega)}\|\bS u\|_{L^2(v,\Omega)} \notag \\
    & \leq  C(\bR)S(2,1) \|u |\bH|\|_{L^2(v,\Omega)}\| \grad u\|_{QL^2(\Omega)} \notag \\
    &  \qquad \quad +  C(\bS) S(2,1)\|u |\bG|\|_{L^2(v,\Omega)}\|\grad u\|_{QL^2(\Omega)}; \notag \\
        \intertext{if we now apply Hölder's inequality with the exponents $\frac{q}{2}$ and $(\frac{q}{2})'=\frac{q}{q-2}$, we get}
    & \leq C(\bR)S(2,1)\|\bH\|_{L^q(v,\Omega) }\|u^2\|_{L^\frac{q}{q-2}(v,\Omega)}^{\frac{1}{2}}\|\nabla u\|_{QL^2(\Omega)} \notag \\
    & \qquad \quad + C(\bS)S(2,1)\|\bG\|_{L^q(v,\Omega) }\|u^2\|_{L^\frac{q}{q-2}(v,\Omega)}^{\frac{1}{2}} \|\nabla u\|_{QL^2(\Omega)}\notag \\
    &  = C_0 \|u^2\|_{L^\frac{q}{q-2}(v,\Omega)}^{\frac{1}{2}}\|\nabla u\|_{QL^2(\Omega)}, \notag
\end{align}
where $C_0=C(\bR)S(2,1)\|\bH\|_{L^q(v,\Omega) }+C(\bS)S(2,1)\|\bG\|_{L^q(v,\Omega) }$.

Since $1<(\frac{q}{2})'<\sigma$, by the interpolation of the Lebesgue norms (see~\cite[inequality~(7.9)]{MR1814364}), Young's inequality~\eqref{eqn:youngs-ineq} with $p=\frac{1}{\lambda}$ and $\eta=\epsilon$ ($\epsilon>0$ to be fixed below), and by rescaling, we have that 
\begin{multline*}  \|u^2\|_{L^{\frac{q}{q-2}}(v,\Omega)} 
 =\|u^{2\lambda}u^{2(1-\lambda)}\|_{L^{\frac{q}{q-2}}(v,\Omega)}
 \leq \|u^2\|^\lambda_{L^\sigma(v,\Omega)}\|u^2\|^{1-\lambda}_{L^1(v,\Omega)} \\
 \leq \epsilon\|u^2\|_{L^\sigma(v,\Omega)}+\epsilon^{-\mu}\|u^2\|_{L{^1}(v,\Omega)}
        =\epsilon\|u\|^2_{L^{2\sigma}(v,\Omega)}+
        \epsilon^{-\mu}\|u\|^{2}_{L^{2}(v,\Omega)},
\end{multline*}
where $\mu=\frac{1-\lambda}{\lambda}$.        
If we combine this with the previous estimate and use the Sobolev inequality~\eqref{eqn:global-sobolev}, we get
\begin{align*}
        I_1&\leq C_0\big(\epsilon^{\frac{1}{2}}\|u\|_{L^{2\sigma}(v,\Omega)}+\epsilon^{\frac{-\mu}{2}}\|u\|_{L^2(v,\Omega)}\big) \|\nabla u\|_{QL^2(\Omega)}\\
    &=C_0S(2,\sigma)\epsilon^{\frac{1}{2}}\|\grad u\|_{QL^{2}(\Omega)}^2
    + C_0 \epsilon^{\frac{-\mu}{2}}\|u\|_{L^2(v,\Omega)}\|\nabla u\|_{QL^2(\Omega)}.\\
    \intertext{If we apply Young's inequality~\eqref{eqn:youngs-ineq} with $p=2$ and $\eta=\epsilon^{\frac{-(\mu+1)}{2}}$, we get }
    &\leq C_0 (S(2,\sigma)+1)\epsilon^{\frac{1}{2}}  \|\nabla u\|^2_{QL^2(\Omega)}
    + C_0\epsilon^{\frac{-(2\mu+1)}{2}}\|u\|^2_{L^2(v,\Omega)}.
   \end{align*}
Therefore, if we let $\epsilon= (4C_0(S(2,\sigma)+1)^{-2}$ and $C_1=C_0\epsilon^{\frac{-(2\mu+1)}{2}}$ we get 
\begin{equation}\label{eq:A}
    I_1\leq\frac{1}{4}\|\nabla u\|_{QL^2(\Omega)}^2+ C_{1}\|u\|_{L^2(v,\Omega)}^2.
\end{equation}

\medskip

We now estimate $I_2$.  The argument is similar to the above but simpler.  We use H\"older's inequality with exponents  $\frac{q}{2}$ and $(\frac{q}{2})'=\frac{q}{q-2}$, the estimate for $\|u^2\|_{L^{\frac{q}{q-2}}(v,\Omega)}$, and the Sobolev inequality  to get 
\begin{align*}
      I_2 \leq 
      &\|F\|_{L^{\frac{q}{2}}(v,\Omega)}\|u^2\|_{L^{\frac{q}{q-2}}(v,\Omega)}.\\
      &\leq\|F\|_{L^\frac{q}{2}(v,\Omega)}\big(\epsilon\|u\|_{L^{2\sigma}(v,\Omega)}^2+
        \epsilon^{-\mu}\|u\|_{L^{2}(v,\Omega)}^2\big)\\
        &\leq \|F\|_{L^\frac{q}{2}(v,\Omega)}\bigg(\epsilon{S(2,\sigma)}^2\|\nabla u\|_{QL^2(\Omega)}^2+
        \epsilon^{-\mu}\|u\|_{L^{2}(v,\Omega)}^2\bigg).
\end{align*}
Therefore, if we now set $\epsilon=\big(4S(2,\sigma)^2\|F\|_{L^{\frac{q}{2}}(v,\Omega)}\big)^{-1}$ and $C_2=\epsilon^{-\mu}\|F\|_{L^{\frac{q}{2}}(v,\Omega)}$ we get
\begin{equation}\label{eq:B}
    I_2\leq\frac{1}{4}\|\nabla u\|_{QL^2(\Omega)}^2+C_{2}\|u\|_{L^2(v,\Omega)}^2.
\end{equation}

If we now insert \eqref{eq:A} and \eqref{eq:B} into \eqref{eq-coer}, and then apply the Sobolev inequality with no gain, we get
\begin{equation*}
        \mathcal{L}(u,u) \geq \frac{1}{2}\|\nabla u\|_{QL^2(\Omega)}^2 -(C_1+C_2)\|u\|_{L^2(v,\Omega)}^2 \geq c_1\|u\|^2_{QH^{1,2}(v,\Omega)}-C_3\|u\|^2_{L^2(v,\Omega)},
 \end{equation*}
where $c_1=\big(\sqrt{2}(1+S(2,1))\big)^{-2}$ and $C_3=C_1+C_2$.  This completes the proof
\end{proof}

\section{Boundedness and coercivity:  Sobolev inequality with Orlicz gain or no gain}
\label{section:coerce-weaker-gain}

In this section we prove that the bilinear form~\eqref{bilinear} is bounded and almost coercive assuming the existence of a Sobolev inequality with Orlicz gain or no gain. We first consider the case of Orlicz gain.

\begin{lemma}\label{orlicz-gain-boundedness}
Given  Hypothesis~\ref{hyp:global-hyp}, suppose that the global degenerate Sobolev inequality~\eqref{eqn:global-sobolev-orlicz} holds with $A(t)=t^2\log(e+t)^\sigma$, $\sigma>0$.  
If for some $0<\lambda<1$, 
$\bH,\,\bG \in L^{C}(v,\Omega,\R^N)$, and $F\in L^{D}(v,\Omega)$, where $C(t)=\exp{(t^\frac{2}{\lambda\sigma})}-1$ and $D(t)=\exp{(t^\frac{1}{\lambda\sigma})}-1$, then
 the bilinear form $\mathcal{L}$ is bounded in $QH_0^{1,2}(v,\Omega)$: that is, there exists $C>0$  such that for every $u,w \in QH_0^{1,2}(v,\Omega)$,
    \begin{equation*}
        |\mathcal{L}(u,w)|\leq C\|u\|_{QH^{1,2}(v,\Omega)}\cdot\|w\|_{QH^{1,2}(v,\Omega)}
    \end{equation*}
\end{lemma}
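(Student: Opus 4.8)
The plan is to mimic the proof of Lemma~\ref{lemma:bounded-power-gain}, replacing each use of H\"older's inequality in Lebesgue spaces by its Orlicz analogue, in particular Lemma~\ref{ScottInterp}. First I would fix $u,\,w\in QH_0^{1,2}(v,\Omega)$ and run verbatim the first three inequalities of~\eqref{first part of bil.bdd}: by the Cauchy--Schwarz and H\"older inequalities together with Lemma~\ref{lemma:subunicity-cond},
\begin{multline*}
|\mathcal{L}(u,w)| \leq \|u\|_{QH^{1,2}(v,\Omega)}\|w\|_{QH^{1,2}(v,\Omega)}
+ C(\bR)\|w|\bH|\|_{L^2(v,\Omega)}\|u\|_{QH^{1,2}(v,\Omega)}\\
+ C(\bS)\|u|\bG|\|_{L^2(v,\Omega)}\|w\|_{QH^{1,2}(v,\Omega)}
+ \|F^{\frac12}u\|_{L^2(v,\Omega)}\|F^{\frac12}w\|_{L^2(v,\Omega)}.
\end{multline*}
Thus it suffices to bound $\|w|\bH|\|_{L^2(v,\Omega)}$, $\|u|\bG|\|_{L^2(v,\Omega)}$ and $\|F^{\frac12}u\|_{L^2(v,\Omega)}$ by a constant times the respective $QH^{1,2}(v,\Omega)$ norm.

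For the terms carrying $\bH$ and $\bG$, I would apply Lemma~\ref{ScottInterp} with $\Psi_1 = A$, $\Psi_2 = C$, $\lambda_1=\lambda_2=1$ and $\Phi(t)=t^2$ (so $L^\Phi(v,\Omega)=L^2(v,\Omega)$ and $\Theta_i=\Psi_i$ are Young). Its hypothesis~\eqref{power-holder-condition} reduces to $A^{-1}(t)C^{-1}(t)\lesssim t^{1/2}$, which follows from the recorded asymptotics $A^{-1}(t)\approx t^{1/2}\log(e+t)^{-\sigma/2}$ and $C^{-1}(t)\approx\log(e+t)^{\lambda\sigma/2}$: the logarithmic factors combine to $\log(e+t)^{\sigma(\lambda-1)/2}$, a nonpositive power since $\lambda<1$. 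This gives $\|w|\bH|\|_{L^2(v,\Omega)}\leq K\|w\|_{L^A(v,\Omega)}\|\bH\|_{L^C(v,\Omega)}$, and the Orlicz Sobolev inequality~\eqref{eqn:global-sobolev-orlicz} then yields $\|w\|_{L^A(v,\Omega)}\leq S(2,A)\|\grad w\|_{QL^2(\Omega)}\leq S(2,A)\|w\|_{QH^{1,2}(v,\Omega)}$; the identical estimate holds with $w,\bH$ replaced by $u,\bG$.

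For the $F$-term I would write $\|F^{\frac12}u\|_{L^2(v,\Omega)}^2=\|Fu^2\|_{L^1(v,\Omega)}$ and apply Lemma~\ref{ScottInterp} with $\Psi_1=D$, $\Psi_2=A$, $\lambda_1=1$, $\lambda_2=2$ and $\Phi(t)=t$; here $\Theta_2(t)=A(t^{1/2})\approx t\log(e+t)^\sigma$ is a Young function, and the required inequality becomes $D^{-1}(t)\,A^{-1}(t)^2\lesssim t$, which again follows from $D^{-1}(t)\approx\log(e+t)^{\lambda\sigma}$, $A^{-1}(t)^2\approx t\log(e+t)^{-\sigma}$ and $\lambda<1$. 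Hence $\|F^{\frac12}u\|_{L^2(v,\Omega)}\leq K^{1/2}\|F\|_{L^D(v,\Omega)}^{1/2}\|u\|_{L^A(v,\Omega)}\leq K^{1/2}\|F\|_{L^D(v,\Omega)}^{1/2}S(2,A)\|u\|_{QH^{1,2}(v,\Omega)}$, and likewise for $w$. Substituting these three bounds into the displayed estimate yields $|\mathcal{L}(u,w)|\leq C\|u\|_{QH^{1,2}(v,\Omega)}\|w\|_{QH^{1,2}(v,\Omega)}$, with $C$ depending only on $C(\bR)$, $C(\bS)$, $S(2,A)$, $K$ and the norms $\|\bH\|_{L^C(v,\Omega)}$, $\|\bG\|_{L^C(v,\Omega)}$, $\|F\|_{L^D(v,\Omega)}$.

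I do not expect a genuine obstacle here: this is a routine transcription of Lemma~\ref{lemma:bounded-power-gain} into the Orlicz scale, and it is in fact simpler than the corresponding step in Section~\ref{section:zero} because we work on all of $\Omega$ and so need no $\chi_\Gamma$ factor. The only point requiring care is the Orlicz bookkeeping --- checking that each auxiliary function $\Theta_i(t)=\Psi_i(t^{1/\lambda_i})$ is (equivalent for large $t$ to) a Young function and that each instance of~\eqref{power-holder-condition} holds. Every such check turns on the inequality $0<\lambda<1$, which is exactly what forces the net powers of $\log(e+t)$ to be nonpositive. (In fact $\lambda=1$ would already suffice for boundedness; the stronger hypothesis $\lambda<1$ is needed only later, for the interpolation argument in the almost-coercivity estimate.)
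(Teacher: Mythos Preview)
Your proposal is correct and follows essentially the same route as the paper: both start from~\eqref{first part of bil.bdd} and then estimate the three product norms via the generalized Orlicz H\"older inequality, checking the relevant inverse-function inequalities via the asymptotics of $A^{-1}$, $C^{-1}$, $D^{-1}$, and finally feed everything through the Sobolev inequality~\eqref{eqn:global-sobolev-orlicz}. The only cosmetic difference is that the paper inserts an intermediate Young function $\Psi(t)=t^2\log(e+t)^{\lambda\sigma}\lesssim A(t)$ and applies Sobolev with gain $\Psi$, whereas you work directly with $A$; your packaging via Lemma~\ref{ScottInterp} is equivalent to the paper's use of~\eqref{eqn:gen-orlicz-holder} plus rescaling. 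Your closing remark that $\lambda=1$ already suffices for boundedness is also correct.
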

\begin{proof}
    Fix $u,\,w \in QH_0^{1,2}(v,\Omega)$. We first argue as we did in the proof of Lemma~\ref{lemma:bounded-power-gain} to get inequality~\eqref{first part of bil.bdd}.   To complete the proof we need to estimate the last three terms in this inequality.  To estimate the first of these, note that $C^{-1}(t)\Psi^{-1}(t)\lesssim t^{\frac{1}{2}}$, where $\Psi(t)=t^2\log(e+t)^{\lambda\sigma} \lesssim A(t)$.  Therefore, by the generalized H\"older's inequality~\eqref{eqn:gen-orlicz-holder} and the Sobolev inequality~\eqref{eqn:global-sobolev-orlicz} with gain $\Psi$, we get
    \begin{multline*} \|w |\bH| \|_{L^2(v, \Omega)} 
    \leq K \|\bH\|_{L^{C}(v,\Omega)} \|w\|_{L^{\Psi}(v,\Omega)}\\
    \leq KS(2,\Psi)\|\bH\|_{L^{C}(v,\Omega)} \|\grad w\|_{QL^2(\Omega)}
    \leq KS(2,\Psi)\|\bH\|_{L^{C}(v,\Omega)} \|w\|_{QH^{1,2}(v,\Omega)}.
    \end{multline*}
Similarly, we have that 
\begin{equation*}
      \|u\bG\|_{L^2(v, \Omega)} 
      \leq K S(2, \Psi)\|\bG\|_{L^{C}(v,\Omega)}\|w\|_{QH^{1,2}(v,\Omega)}.
\end{equation*}
To estimate the final term, we argue exactly as we did for the first term; by rescaling we get
\begin{equation*} \|F^{\frac{1}{2}}u \|^2_{L^2(v, \Omega)} 
    \leq K \|F^{\frac{1}{2}}u\|_{L^{C}(v,\Omega)} \|u\|_{L^{\Psi}(v,\Omega)}\\
    \leq KS(2,\Psi)\|F\|_{L^{D}(v,\Omega)}^{\frac{1}{2}}  \|u\|_{QH^{1,2}(v,\Omega)}.
\end{equation*}
The same inequality holds with $u$ replaced by $w$.  If we now combine these three estimates with inequality~\eqref{first part of bil.bdd}, we get
\[ |\mathcal{L}(u,w)| \leq C\|u\|_{QH^{1,2}(v,\Omega)}\|w\|_{QH^{1,2}(v,\Omega)}, \]
where 
$$C=  KS(2,\Psi)\big( C(\bR)\|\bH\|_{L^C(v,\Omega)} + C(\bS)\|\bG\|_{L^C(v,\Omega)}
+ KS(2,\Psi)\|F\|_{L^D(v,\Omega)}\big).$$
\end{proof}

Next, we prove that with the same hypotheses the bilinear form is almost coercive.

\begin{lemma}\label{orlicz-gain-almost-coercivity}
Given  Hypothesis~\ref{hyp:global-hyp}, suppose that the global degenerate Sobolev inequality~\eqref{eqn:global-sobolev-orlicz} holds with $A(t)=t^2\log(e+t)^\sigma$, $\sigma>0$.  
If for some $0<\lambda<1$, 
$\bH,\,\bG \in L^{C}(v,\Omega,\R^N)$, and $F\in L^{D}(v,\Omega)$, where $C(t)=\exp{(t^\frac{2}{\lambda\sigma})}-1$ and $D(t)=\exp{(t^\frac{1}{\lambda\sigma})}-1$, then
 the bilinear form $\mathcal{L}$ is almost coercive: that is, there exist constants $c_1,\, C_3>0$ such that for every $u \in QH^{1,2}_0(v,\Omega)$,
    \begin{equation*}
        \mathcal{L}(u,u) \geq c_1\|u\|^2_{QH^{1,2}(v,\Omega)} - C_3\|u\|^2_{L^2(v,\Omega)}.
    \end{equation*}
\end{lemma}

\begin{proof}
Fix $u \in QH_0^{1,2}(v,\Omega)$. We first argue as we did in the proof of Lemma~\ref{power-gain-almost-coercivity}, dividing the estimate into three parts as in~\eqref{eq-coer}.  We first estimate $I_1$ by arguing as in~\eqref{eq:I1}, but then applying H\"older's inequality in the scale of Orlicz spaces~\eqref{eqn:orlicz-holder} and rescaling to get
\begin{align*}
   I_1 & \leq \sqrt{2}C(\bR)S(2,1)\||\bH|^2\|_{L^D(v,\Omega)}^{\frac{1}{2}} \|u^2\|_{L^{\bar{D}}(v,\Omega)}^{\frac{1}{2}} \|\grad u \|_{QL^2(\Omega)} \\
   & \qquad \qquad + \sqrt{2}C(\bS)S(2,1)\||\bG|^2\|_{L^D(v,\Omega)}^{\frac{1}{2}} \|u^2\|_{L^{\bar{D}}(v,\Omega)}^{\frac{1}{2}} \|\grad u \|_{QL^2(\Omega)} \\
      & \leq \sqrt{2}S(2,1)(C(\bR) \|\bH\|_{L^C(v,\Omega)}  + C(\bS) \|\bG\|_{L^C(v,\Omega)})
      \|u^2\|_{L^{\bar{D}}(v,\Omega)}^{\frac{1}{2}} \|\grad u \|_{QL^2(\Omega)} \\
      & = C_0\|u^2\|_{L^{\bar{D}}(v,\Omega)}^{\frac{1}{2}} \|\grad u \|_{QL^2(\Omega)}.
\end{align*}

If we let $\Psi(t)=t\log(e+t)^\sigma$, then $t^{1-\lambda}\Psi^{-1}(t)^\lambda \lesssim \bar{D}(t)^{-1}$, and so by Theorem~\ref{scottinterp2} we have that 
\begin{equation}\label{Orlicz interpolation}
            \|u^2\|_{L^{\Bar{\Phi}}(v,\Omega)}
           =\epsilon\|u\|_{L^A(v,\Omega)}^2+K^{\mu+1}
        \epsilon^{-\mu}\|u\|_{L^2(v,\Omega)}^2
\end{equation}
 where $\mu=\frac{1-\lambda}{\lambda}$ and  $\epsilon >0$ will be fixed below.  
 Therefore, if we combine these two inequalities and use the Sobolev inequality~\eqref{eqn:global-sobolev-orlicz}, we get
\begin{align*}
 I_1 
 & \leq C_0\big(\epsilon^\frac{1}{2}\|u\|_{L^A(v,\Omega)}+K^{\frac{\mu+1}{2}}\epsilon^{\frac{-\mu}{2}}\|u\|_{L^2(v,\Omega)}\big) 
 \|\grad u \|_{QL^2(\Omega)} \\
 & \leq C_0S(2,A)\epsilon^\frac{1}{2}\|\grad u \|_{QL^2(\Omega)}^2
 + C_0K^{\frac{\mu+1}{2}}\epsilon^{\frac{-\mu}{2}}\|u\|_{L^2(v,\Omega)}\|\grad u \|_{QL^2(\Omega)}.
 \intertext{If we apply Young's inequality~\eqref{eqn:youngs-ineq} m with $p=2$ and  $\eta= \epsilon^{-\frac{(\mu+1)}{2}}K^{\frac{\mu+1}{2}}$, we get }
 & \leq C_0S(2,A)\epsilon^\frac{1}{2}\|\grad u \|_{QL^2(\Omega)}^2
 + C_0\epsilon^\frac{1}{2}\|\grad u \|_{QL^2(\Omega)}^2+ C_0K^{\mu+1}\epsilon^{-\frac{2\mu+1}{2}}\|u\|_{L^2(v,\Omega)}^2.
\end{align*}
Therefore, if we let $\epsilon=(4C_0S(2,A)+C_0)^{-2}$ and $C_1= C_0K^{\mu+1}\epsilon^{-\frac{2\mu+1}{2}}$, we have shown that
   \begin{equation}\label{eq:I1 Orlicz}
    I_1\leq \frac{1}{4}\|\nabla u\|_{QL^2(\Omega)}^2+C_{1}\|u\|_{L^2(v,\Omega)}^2.
\end{equation}

\medskip

We now estimate $I_2$ in \eqref{eq-coer}.  We proceed as above:  we apply the Orlicz H\"older inequality,  the interpolation inequality~\eqref{Orlicz interpolation}, and the Sobolev inequality to get
\begin{align*} 
    I_2
    & \leq 2\|F\|_{L^D(v,\Omega)}\|u^2\|_{L^{\Bar{D}}(v,\Omega)} \\
    & \leq 2\|F\|_{L^D(v,\Omega)}(\epsilon\|u\|_{L^A(v,\Omega)}^2+K^{\mu+1}
        \epsilon^{-\mu}\|u\|_{L^2(v,\Omega)}^2) \\
    & \leq 2\|F\|_{L^D(v,\Omega)}(\epsilon S(2,A)^2 \|\grad u\|_{QL^2(\Omega)}^2 +K^{\mu+1}
        \epsilon^{-\mu}\|u\|_{L^2(v,\Omega)}^2). 
\end{align*}
If we now let $\epsilon=(8{S(2,A)}^2\|F\|_{L^\Phi(v,\Omega)})^{-1}$, we get
\begin{equation}\label{eq:I2 Orlicz}
    I_2\leq \frac{1}{4}\|\nabla u\|_{QL^2(\Omega)}^2+C_{2}\|u\|_{L^2(v,\Omega)}^2.
\end{equation}
where $C_{2}=2K^{\mu+1}\epsilon^{-\mu}\|F\|_{L^\Phi(v,\Omega)}$.

If we now combine \eqref{eq:I1 Orlicz} and \eqref{eq:I2 Orlicz} with \eqref{eq-coer}, and use the Sobolev inequality with no gain, we get
\begin{align*} \mathcal{L}(u,u)
& \geq \|\nabla u\|_{QL^2(\Omega)}^2-\frac{1}{2}\|\nabla u\|_{QL^2(\Omega)}^2-(C_{1}+C_2)\| u\|_{L^2(v,\Omega)}^2 \\
& \geq c_1 \|u\|_{QH^{1,2}(v,\Omega)} - C_3\| u\|_{L^2(v,\Omega)}^2,
\end{align*}
where $C_3=C_{1}+C_{2}$ and $c_1=(\sqrt{2}(1+S(2,1)))^{-2}$. This completes the proof.
\end{proof}

\medskip

We now consider the case of a Sobolev inequality with no gain.  

\begin{lemma}\label{no-gain-boundedness} 
Given  Hypothesis~\ref{hyp:global-hyp}, suppose that the global degenerate Sobolev inequality~\eqref{eqn:global-sobolev} holds without gain (i.e., $\sigma=1$). 
If $F\in L^{\infty}(v,\Omega)$ and $\bH,\,\bG \in L^{\infty}(v,\Omega,\R^N)$,  then the bilinear form $\mathcal{L}$ is bounded in $QH_0^{1,2}(v,\Omega)$: that is, there is a $C>0$  such that for all $u,\,w \in QH_0^{1,2}(v,\Omega)$,
    \begin{equation*}
        |\mathcal{L}(u,w)|\leq C\|u\|_{QH^{1,2}(v,\Omega)}\cdot\|w\|_{QH^{1,2}(v,\Omega)}.
    \end{equation*}
\end{lemma}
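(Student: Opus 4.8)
The plan is to follow the proof of Lemma~\ref{lemma:bounded-power-gain} verbatim through its first few lines and then simplify drastically: with $L^\infty$ coefficients no Sobolev inequality is needed at all. First I would fix $u,\,w\in QH_0^{1,2}(v,\Omega)$ and, applying H\"older's inequality and Lemma~\ref{lemma:subunicity-cond} to the four integrals defining $\mathcal{L}(u,w)$ in~\eqref{bilinear}, reach exactly the same intermediate estimate~\eqref{first part of bil.bdd} used there, namely
\begin{multline*}
|\mathcal{L}(u,w)|\leq \|\grad u\|_{QL^2(\Omega)}\|\grad w\|_{QL^2(\Omega)}
+ C(\bR)\|w|\bH|\|_{L^2(v,\Omega)}\|u\|_{QH^{1,2}(v,\Omega)} \\
+ C(\bS)\|u|\bG|\|_{L^2(v,\Omega)}\|w\|_{QH^{1,2}(v,\Omega)}
+ \|F^{\frac12}u\|_{L^2(v,\Omega)}\|F^{\frac12}w\|_{L^2(v,\Omega)}.
\end{multline*}

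Next I would bound the last three terms using only the hypotheses $\bH,\,\bG\in L^\infty(v,\Omega,\R^N)$ and $F\in L^\infty(v,\Omega)$: pulling the essential suprema out of the integrals gives $\|w|\bH|\|_{L^2(v,\Omega)}\leq\|\bH\|_{L^\infty(v,\Omega)}\|w\|_{L^2(v,\Omega)}$, likewise for $\bG$, and $\|F^{\frac12}u\|_{L^2(v,\Omega)}\|F^{\frac12}w\|_{L^2(v,\Omega)}\leq\|F\|_{L^\infty(v,\Omega)}\|u\|_{L^2(v,\Omega)}\|w\|_{L^2(v,\Omega)}$. Since $\|\cdot\|_{L^2(v,\Omega)}$ and $\|\grad\cdot\|_{QL^2(\Omega)}$ are each dominated by $\|\cdot\|_{QH^{1,2}(v,\Omega)}$ directly from the definition of that norm, every term on the right is at most a constant multiple of $\|u\|_{QH^{1,2}(v,\Omega)}\|w\|_{QH^{1,2}(v,\Omega)}$; collecting them yields the claim with
\[
C=1+C(\bR)\|\bH\|_{L^\infty(v,\Omega)}+C(\bS)\|\bG\|_{L^\infty(v,\Omega)}+\|F\|_{L^\infty(v,\Omega)}.
\]

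There is essentially no obstacle here — this is the easiest of the three boundedness lemmas. The only point worth flagging is that, unlike in Lemmas~\ref{lemma:bounded-power-gain} and~\ref{orlicz-gain-boundedness}, the Sobolev inequality is never invoked in the proof of boundedness: the $L^\infty$ bounds on the coefficients together with the trivial embedding of $L^2(v,\Omega)$ into $QH^{1,2}(v,\Omega)$ suffice. I would keep the hypothesis $\sigma=1$ in the statement only for symmetry with the companion almost-coercivity lemma (where the Sobolev inequality with no gain genuinely does enter) and with the way these results are combined in Section~\ref{section:main-proofs}.
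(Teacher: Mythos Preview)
Your proof is correct and follows the same overall structure as the paper's: both start from the intermediate estimate~\eqref{first part of bil.bdd} and then pull out the $L^\infty$ norms of $\bH$, $\bG$, $F$. The only difference is in the final step: the paper applies the Sobolev inequality without gain to replace $\|u\|_{L^2(v,\Omega)}$ and $\|w\|_{L^2(v,\Omega)}$ by $S(2,1)\|\grad u\|_{QL^2(\Omega)}$ and $S(2,1)\|\grad w\|_{QL^2(\Omega)}$, arriving at $C=1+C(\bR)S(2,1)\|\bH\|_{L^\infty}+C(\bS)S(2,1)\|\bG\|_{L^\infty}+S(2,1)^2\|F\|_{L^\infty}$, whereas you bound $\|\cdot\|_{L^2(v,\Omega)}$ directly by $\|\cdot\|_{QH^{1,2}(v,\Omega)}$ from the definition of the norm. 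Your observation that the Sobolev inequality is superfluous for boundedness is correct and yields a cleaner constant; the paper's detour through Sobolev is harmless but unnecessary here.
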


\begin{proof}
Let $u,w \in QH_0^{1,2}(v,\Omega)$. We argue as in~\eqref{first part of bil.bdd} but instead of using H\"older's inequality we use the fact that all the coefficients are in $L^\infty$ to get
\begin{align*}
   |\mathcal{L}(u,w)|
   &\leq  \|u\|_{QH^{1,2}(v,\Omega)}\|w\|_{QH^{1,2}(v,\Omega)}  \\
   & \qquad \quad + C(\bR)\|\bH\|_{L^{\infty}(v,\Omega)}\|w\|_{L^2(v,\Omega)}\|u\|_{QH^{1,2}(v,\Omega)}\\
    &\qquad \quad+ C(\bS)\|\bG\|_{L^{\infty}(v,\Omega)}\|u\|_{L^2(v,\Omega)}\|w\|_{QH^{1,2}(v,\Omega)} \\
    & \qquad \quad  +\|F\|_{L^{\infty}(v,\Omega)}\|u\|_{L^2(v,\Omega)}\|w\|_{L^2(v,\Omega)}. 
 \intertext{If we now apply the Sobolev inequality, we get}
  & \leq  \|u\|_{QH^{1,2}(v,\Omega)}\|w\|_{QH^{1,2}(v,\Omega)}  \\
   & \qquad \quad + C(\bR)S(2,1)\|\bH\|_{L^{\infty}(v,\Omega)}
   \|\grad w\|_{QL^2(\Omega)}\|u\|_{QH^{1,2}(v,\Omega)}\\
    &\qquad \quad+ C(\bS)S(2,1)\|\bG\|_{L^{\infty}(v,\Omega)}
    \|\grad u\|_{QL^2(\Omega)}\|w\|_{QH^{1,2}(v,\Omega)} \\
    & \qquad \quad  +S(2,1)^2\|F\|_{L^{\infty}(v,\Omega)}\|\grad u\|_{QL^2(\Omega)}\|\grad w\|_{QL^2(\Omega)} \\ 
    & \leq  C\|u\|_{QH^{1,2}(v,\Omega)}\|w\|_{QH^{1,2}(v,\Omega)},
\end{align*}
where 
$$C=1+ C(\bR)S(2,1)\|\bH\|_{L^{\infty}(v,\Omega)}+ C(\bS)S(2,1)\|\bG\|_{L^{\infty}(v,\Omega)}+S(2,1)^2\|F\|_{L^\infty(v,\Omega)}.$$
\end{proof}

\begin{lemma}\label{no-gain-almost-coercivity}
Given  Hypothesis~\ref{hyp:global-hyp}, suppose that the global degenerate Sobolev inequality~\eqref{eqn:global-sobolev} holds without gain (i.e., $\sigma=1$). 
If $F\in L^{\infty}(v,\Omega)$ and $\bH,\,\bG \in L^{\infty}(v,\Omega,\R^N)$,  then the bilinear form $\mathcal{L}$ is almost coercive:  that is, there exist constants $c_1,\,C_3>0$ such that for every $u \in QH^{1,2}_0(v,\Omega)$,
\begin{equation*}
     \mathcal{L}(u,u) \geq c_1\|u\|^2_{QH^{1,2}(v,\Omega)} - C_3\|u\|^2_{L^2(v,\Omega)}.
    \end{equation*}
\end{lemma}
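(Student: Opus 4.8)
The plan is to mirror the proofs of Lemma~\ref{power-gain-almost-coercivity} and Lemma~\ref{orlicz-gain-almost-coercivity}, but the argument is much shorter here because every coefficient lies in $L^\infty(v,\Omega)$, so no interpolation inequality is needed. First I would fix $u\in QH_0^{1,2}(v,\Omega)$ and, dropping the principal term below exactly as in~\eqref{eq-coer}, write
\[ \mathcal{L}(u,u) \geq \|\grad u\|_{QL^2(\Omega)}^2 - I_1 - I_2, \]
where $I_1 = \bigl|\int_\Omega u(\bH\cdot\bR + \bG\cdot\bS)u\,vdx\bigr|$ and $I_2 = \bigl|\int_\Omega Fu^2\,vdx\bigr|$.

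To estimate $I_1$, I would apply H\"older's inequality in $L^2(v,\Omega)$ as in~\eqref{eq:I1}, then use the bounds $\|u|\bH|\|_{L^2(v,\Omega)} \leq \|\bH\|_{L^\infty(v,\Omega)}\|u\|_{L^2(v,\Omega)}$ together with Lemma~\ref{lemma:subunicity-cond} and Remark~\ref{remark:subunit-norm-cond} (which uses the no-gain Sobolev inequality) to get $\|\bR u\|_{L^2(v,\Omega)} \leq C(\bR)(1+S(2,1))\|\grad u\|_{QL^2(\Omega)}$, and likewise for the $\bG,\bS$ term. This yields
\[ I_1 \leq (1+S(2,1))\bigl(C(\bR)\|\bH\|_{L^\infty(v,\Omega)} + C(\bS)\|\bG\|_{L^\infty(v,\Omega)}\bigr)\|u\|_{L^2(v,\Omega)}\,\|\grad u\|_{QL^2(\Omega)}, \]
and then Young's inequality~\eqref{eqn:youngs-ineq} with $p=2$ and a sufficiently small parameter gives $I_1 \leq \tfrac12\|\grad u\|_{QL^2(\Omega)}^2 + C_1\|u\|_{L^2(v,\Omega)}^2$, where $C_1$ depends only on $S(2,1)$, $C(\bR)$, $C(\bS)$, $\|\bH\|_{L^\infty(v,\Omega)}$, and $\|\bG\|_{L^\infty(v,\Omega)}$.

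The term $I_2$ requires no absorption: one simply has $I_2 \leq \|F\|_{L^\infty(v,\Omega)}\|u\|_{L^2(v,\Omega)}^2 =: C_2\|u\|_{L^2(v,\Omega)}^2$. Combining the two bounds gives $\mathcal{L}(u,u) \geq \tfrac12\|\grad u\|_{QL^2(\Omega)}^2 - (C_1+C_2)\|u\|_{L^2(v,\Omega)}^2$, and a final application of the Sobolev inequality without gain, in the form $\|u\|_{QH^{1,2}(v,\Omega)} \leq (1+S(2,1))\|\grad u\|_{QL^2(\Omega)}$, converts this into the claimed estimate with $c_1 = \bigl(\sqrt2(1+S(2,1))\bigr)^{-2}$ and $C_3 = C_1 + C_2$. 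I do not expect any real obstacle here: this is the simplest of the three coercivity lemmas. The only point requiring a little care is the bookkeeping of constants and choosing the Young parameter in the $I_1$ bound so that the gradient term it produces is absorbed into the principal term — which is immediate since, unlike in the power-gain and Orlicz-gain cases, $I_2$ contributes no gradient term and hence no second application of Young's inequality is needed.
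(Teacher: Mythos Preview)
Your proposal is correct and matches the paper's proof essentially step for step: the paper also starts from \eqref{eq-coer}, bounds $I_1$ via the $L^\infty$ bounds on $\bH,\bG$ together with Lemma~\ref{lemma:subunicity-cond} and the no-gain Sobolev inequality, absorbs the gradient term using Young's inequality with $p=2$, estimates $I_2$ directly by $\|F\|_{L^\infty(v,\Omega)}\|u\|_{L^2(v,\Omega)}^2$, and finishes with the same constant $c_1=\bigl(\sqrt{2}(1+S(2,1))\bigr)^{-2}$. The only cosmetic difference is that you carry the factor $(1+S(2,1))$ from Remark~\ref{remark:subunit-norm-cond} explicitly in the $I_1$ bound, whereas the paper writes $S(2,1)$ there; your version is in fact the more careful one.
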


\begin{proof}
   We argue as we did above, starting with \eqref{eq-coer} and \eqref{eq:I1}.  Instead of applying H\"older's inequality we use the fact that the coefficients are in $L^\infty$ and Young's inequality~\eqref{eqn:youngs-ineq} with $p=2$ and $\eta=\epsilon$ (to be chosen below) to get
\begin{align*}
I_1 
& \leq S(2,1)(C(\bR)\|\bH\|_{L^\infty(\Omega)} + C(\bS)\|\bG\|_{L^\infty(\Omega)})
\|u\|_{L^2(v,\Omega)}\|\nabla u\|_{QL^2(\Omega)} \\
& \leq S(2,1)(C(\bR)\|\bH\|_{L^\infty(\Omega)} + C(\bS)\|\bG\|_{L^\infty(\Omega)})
\big(\epsilon\|\nabla u\|^2_{QL^2( \Omega)} + \epsilon^{-1}\| u\|^2_{L^2(v,\Omega)}\big).
\end{align*}     
If we let $\epsilon=\big( 2S(2,1)(C(\bR)\|\bH\|_{L^\infty(\Omega)} + C(\bS)\|\bG\|_{L^\infty(\Omega)})\big)^{-1}$, we get
   \begin{equation}\label{eq:I1 Infty}
    I_1\leq \frac{1}{2}\|\nabla u\|_{QL^2(\Omega)}^2+C_1\|u\|_{L^2(v,\Omega)}^2
\end{equation}
where $C_1= \epsilon^{-1}S(2,1)(C(\bR)\|\bH\|_{L^\infty(\Omega)} + C(\bS)\|\bG\|_{L^\infty(\Omega)})$.

\medskip

The estimate for $I_2$ is imediate:

\begin{equation} \label{eqn:I2-infty}
      I_2\leq \int_\Omega |F||u^2|\,vdx\leq \|F\|_{L^\infty(\Omega)}\|u\|_{L^{2}(v,\Omega)}^2 = C_2\|u\|_{L^{2}(v,\Omega)}^2.
  \end{equation}
  Therefore, if we insert \eqref{eq:I1 Infty} and \eqref{eqn:I2-infty} into \eqref{eq-coer}, and apply the Sobolev inequality, we get
  \begin{equation*}
        \mathcal{L}(u,u)
         \geq \frac{1}{2}\|\nabla u\|_{QL^2(\Omega)}^2-C_3\| u\|_{L^2(v,\Omega)}^2 
         \geq c_1\|u\|_{QH^{1,2}(v,\Omega)}-C_3\| u\|_{L^2(v,\Omega)}^2, 
\end{equation*}
where $c_1=\big(\sqrt{2}(S(2,1)+1)\big)^{-2}$ and  $C_3=C_1+C_2$. 
\end{proof}

\section{Proof of main results}
\label{section:main-proofs}

In this section we prove Theorems~\ref{thm:main-theorem}--\ref{thm:no-gain}. For clarity, in this section we will not   follow the convention used above of writing (for instance) $u\in QH^{1,2}(v,\Omega)$.  Here we will use the fact that $QH^{1,2}_0(v,\Omega)$ is a Hilbert space whose elements are pairs: ${\bf u} = (u,\grad u)\in L^2(v,\Omega)\oplus QL^2(\Omega)$.  Define the projection mapping $\mathcal{P}_2:QH^{1,2}_0(v,\Omega)\rightarrow L^2(v,\Omega)$  by $\cP_2({\bf u}) = u$.  In our proof below we will need that $\mathcal{P}$ is a compact operator.  We prove this in the next two lemmas.  Since without additional work we can prove a more general result, we will prove that for $1< p<\infty$, the  projection  $\mathcal{P}_p:QH^{1,p}(v,\Omega)\rightarrow L^p(v,\Omega)$, $\cP_p({\bf u}) = u$, is compact.

\begin{lemma} \label{lemma:compact-gain}
Given $1<p<\infty$ and $\Omega$ and $v$ as in Hypothesis~\ref{hyp:global-hyp}, suppose that either the global degenerate Sobolev inequality~\eqref{eqn:global-sobolev} holds with gain $\sigma>1$, or the  Sobolev inequality~\eqref{eqn:global-sobolev-orlicz} holds with $A(t)=t^p\log(e+t)^\sigma$, $\sigma>0$.  Suppose further that the pair $(\Omega, \rho)$  is a quasimetric space that satisfies Hypothesis~\ref{hyp:poincare} and the local  degenerate Poincar\'e inequality~\eqref{eqn:local-poincare-QM}  holds on $\Omega$.  Then the projection mapping $\cP_p:QH^{1,p}_0(v,\Omega)\rightarrow L^p(v,\Omega)$ is compact.
\end{lemma}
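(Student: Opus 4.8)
The plan is to show that $\cP_p$ maps bounded sequences to precompact ones. Let $\{\vecu_k\}$, $\vecu_k=(u_k,\nabla u_k)$, be a sequence in $QH_0^{1,p}(v,\Omega)$ with $\|\vecu_k\|_{QH^{1,p}(v,\Omega)}\le M$. Since $L^p(v,\Omega)$ is reflexive and, by Lemma~\ref{lemma:Qspace}, $QL^p(\Omega)$ is reflexive for $1<p<\infty$, the direct sum $L^p(v,\Omega)\oplus QL^p(\Omega)$ is reflexive, and hence so is its closed subspace $QH_0^{1,p}(v,\Omega)$. I would pass to a subsequence with $\vecu_k\rightharpoonup\vecu=(u,\nabla u)$ weakly --- so that in particular $u_k\rightharpoonup u$ in $L^p(v,\Omega)$ --- and then replace $\vecu_k$ by $\vecu_k-\vecu$ (still bounded, by $2M$, and converging weakly to $0$), so that it suffices to prove: if $\vecu_k\rightharpoonup 0$ in $QH_0^{1,p}(v,\Omega)$ with $\|\vecu_k\|\le 2M$, then $u_k\to 0$ in $L^p(v,\Omega)$. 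Fixing $\epsilon>0$, the goal becomes to bound $\limsup_{k}\|u_k\|_{L^p(v,\Omega)}^p$ by $C\epsilon^p$ with $C$ independent of $\epsilon$.

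The argument then has a global ``tightness'' part and a ``local'' part. For tightness, I would use the gain in the Sobolev inequality together with $v\in L^1(\Omega)$ to choose a compact set $K\subset\Omega$ with $\sup_k\int_{\Omega\setminus K}|u_k|^p\,v\,dx<\epsilon^p$. When \eqref{eqn:global-sobolev} holds with gain $\sigma>1$ this is immediate from H\"older's inequality:
\[
\int_{\Omega\setminus K}|u_k|^p\,v\,dx\le\|u_k\|_{L^{p\sigma}(v,\Omega)}^p\,v(\Omega\setminus K)^{1/\sigma'}\le\big(2M\,S(p,\sigma)\big)^p\,v(\Omega\setminus K)^{1/\sigma'},
\]
and $v(\Omega\setminus K)$ can be made as small as we like since $v\,dx$ is a finite measure on $\Omega$. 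When \eqref{eqn:global-sobolev-orlicz} holds with $A(t)=t^p\log(e+t)^\sigma$, I would instead pair $|u_k|^p$ against $\chi_{\Omega\setminus K}$ in $L^1(v,\Omega)$ via Orlicz--H\"older \eqref{eqn:orlicz-holder} and rescaling, controlling the first factor by $\|u_k\|_{L^A(v,\Omega)}^p\le(2M\,S(p,A))^p$ with the Sobolev inequality; since the complementary Young function is (up to equivalence) $\exp(t^{1/\sigma})-1$, one has $\|\chi_E\|$ in that Orlicz space of order $\log(e+1/v(E))^{-\sigma}\to0$ as $v(E)\to0$, so the resulting bound on $\int_{\Omega\setminus K}|u_k|^p v\,dx$ is again small uniformly in $k$.

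For the local part, I would apply the weak local Poincar\'e inequality \eqref{eqn:local-poincare-QM} with the given $\epsilon$ to obtain a radius $\delta>0$, and then use Hypothesis~\ref{hyp:poincare} --- the openness of $\rho$-balls, the fact that sufficiently small $\rho$-balls lie compactly in $\Omega$, and the geometric doubling condition --- to cover $K$ by finitely many $\rho$-balls $B_i=B_\rho(x_i,r)$ with $0<r<\delta$, $\overline{B_\rho(x_i,cr)}\subset\Omega$, and such that the dilated balls $B_\rho(x_i,cr)$ have overlap bounded by a constant $N_0=N_0(\rho,c)$ independent of $r$ (a Vitali-type covering argument). Then, splitting $|u_k|^p\le 2^{p-1}\big(|u_k-(u_k)_{v,B_i}|^p+|(u_k)_{v,B_i}|^p\big)$ on each $B_i$, applying the Poincar\'e inequality to the first term, and using the bounded overlap of the dilated balls to reassemble the local norms into the global one,
\[
\sum_i\int_{B_i}|u_k-(u_k)_{v,B_i}|^p\,v\,dx\le\epsilon^p\sum_i\|u_k\|_{QH^{1,p}(v,B_\rho(x_i,cr))}^p\le 2^{p-1}N_0\,(2M)^p\,\epsilon^p,
\]
uniformly in $k$. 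Since $v(\Omega)<\infty$, each $\chi_{B_i}\in L^{p'}(v,\Omega)$, so $u_k\rightharpoonup 0$ forces $(u_k)_{v,B_i}\to 0$ for each of the finitely many indices $i$; hence $\sum_i|(u_k)_{v,B_i}|^p v(B_i)<\epsilon^p$ for $k$ large. Combining $\int_K|u_k|^p v\,dx\le\sum_i\int_{B_i}|u_k|^p v\,dx$ with the two preceding bounds and with the tightness estimate gives $\int_\Omega|u_k|^p\,v\,dx\le C\epsilon^p$ for all large $k$; since $\epsilon$ was arbitrary, $u_k\to 0$ in $L^p(v,\Omega)$, and therefore $\cP_p$ is compact.

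The step I expect to be the main obstacle is the covering argument: for each scale $\delta$ coming from the Poincar\'e inequality one must cover the compact set $K$ by finitely many $\rho$-balls of radius $<\delta$ whose $c$-dilates remain inside $\Omega$ and have overlap bounded by a constant that does \emph{not} depend on $\delta$ --- this constant enters the final estimate, so it must not blow up as $\epsilon\to0$. This is precisely where Hypothesis~\ref{hyp:poincare} is used in an essential way, via a Vitali-type covering lemma on the quasimetric space $(\Omega,\rho)$; the $v=1$ case of the lemma is essentially due to Rodney~\cite{MR2994671} and goes back to Sawyer--Wheeden~\cite{MR2574880}. By comparison, the tightness step is routine once the Sobolev gain is available, and the treatment of the averages uses only reflexivity and the finiteness of $v(\Omega)$.
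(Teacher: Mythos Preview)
Your proposal is correct and follows essentially the same approach as the paper: a tightness estimate on $\Omega\setminus K$ using the Sobolev gain (H\"older in the power case, Orlicz--H\"older with rescaling in the log case), a finite covering of the compact set $K$ by small $\rho$-balls with uniformly bounded overlap of their dilates, the Poincar\'e inequality on each ball for the oscillation term, and weak convergence to handle the finitely many averages. The only cosmetic difference is that you subtract the weak limit and show $u_k\to 0$, whereas the paper works directly with the Cauchy condition $\|u_n-u_m\|_{L^p(v,\Omega)}\to 0$; also, the paper does not prove the covering step you flag as the main obstacle but simply invokes \cite[Lemma~2.3]{MR4069608}.
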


The first part of Lemma~\ref{lemma:compact-gain}, assuming a Sobolev inequality with power gain, is a special case of a result due to Monticelli and the fourth author~\cite[Theorem~1.8]{MR4069608}.  The proof of this result was only sketched.  Since we can adapt the short proof of the case we need to also prove the second part, we include the details.

\begin{proof}
    We first assume that inequality~\eqref{eqn:global-sobolev} holds.  To show that $\cP_p$ is compact, fix any sequence $\{ \vecu_n \}_{n=1}^\infty  = \{(u_n,\grad u_n)\}_{n=1}^\infty$ such that for all $n$, $\|\vecu_n \|_{QH^{1,p}(v,\Omega)} \leq M$.  We will show that there exists a subsequence that is Cauchy in $L^p(v,\Omega)$ and so converges.   
    
    Since $\{u_n\}_{n=1}^\infty$ is uniformly bounded in $L^p(v,\Omega)$, which is reflexive, it has a  weakly convergent subsequence (see Yosida~\cite[p.~126]{MR1336382}).  Therefore, by passing to a subsequence, we may assume that $\{u_n\}_{n=1}^\infty$ is weakly Cauchy in $L^p(v,\Omega)$.   We will show that it is strongly Cauchy.  
    
    Fix $\epsilon>0$.  Since $v\in L^1(\Omega)$, there exists a compact set $K\subset \Omega$ such that
    \[ v(\Omega\setminus K)^{\frac{1}{p\sigma'}} \leq \epsilon. \]
    Therefore, for all $n>m$, 
    \[ 
        \|u_n-u_m\|_{L^p(v,\Omega)} 
    \leq 
    \|u_n-u_m\|_{L^p(v,K)}
    +
    \|u_n-u_m\|_{L^p(v,\Omega\setminus K)} = I_1 + I_2. \]
We will estimate each term separately.  The second is straightforward:  by H\"older's inequality and inequality~\eqref{eqn:global-sobolev},
\begin{multline*} I_2 
\leq \|u_n-u_m\|_{L^{\sigma p}(v,\Omega)} v(\Omega\setminus K)^{\frac{1}{p\sigma'}} \\
\leq S(p,\sigma)\|\grad u_n-\grad u_m\|_{QL^p(\Omega)}v(\Omega\setminus K)^{\frac{1}{p\sigma'}}
\leq  2MS(p,\sigma)v(\Omega\setminus K)^{\frac{1}{p\sigma'}}
    \leq  2MS(p,\sigma)\epsilon. 
\end{multline*}

To estimate $I_2$, note that since the $\rho$-balls satisfy the geometric doubling property, by~\cite[Lemma~2.3]{MR4069608} there exists $\delta_0>0$ such that for every $0<r<\delta_0$, there is a finite collection of $\rho$-balls $\{B_j\}_{j=1}^J = \{ B_\rho(x_j,r)\}_{j=1}^J$ with $x_j \in K$ such that
\[  K \subset \bigcup_{j=1}^J B_j \subset \bigcup_{j=1}^J cB_j \subset \Omega, \]
where $c\geq 1$ is the constant in the Poincar\'e inequality~\eqref{eqn:local-poincare-QM}, and 
\[  \sum_{j=1}^J \chi_{cB_j}(x) \leq P. \]
The constant $P$ is independent of the choice of $r$.  Fix such a collection of balls with $r$ such that the constant in~\eqref{eqn:local-poincare-QM} is $\epsilon$.  Then we have that 
\begin{align*} 
I_1^p 
& \leq \sum_{j=1}^J \int_{B_j} |u_n-u_m|^p\,vdx \\
& \leq 2^p \sum_{j=1}^J \int_{B_j} |u_n-u_m- (u_n-u_m)_{B_j,v}|^p\,vdx  
+ 2^p \sum_{j=1}^J |(u_n-u_m)_{B_j,v}|^p v(B_j) \\
& = 2^pI_3 + 2^p I_4.
\end{align*}

We again estimate each term separately.  To estimate $I_3$ we use the Poincar\'e inequality:
\begin{align*}
 I_3 
& \leq \epsilon^p \sum_{j=1}^J \|\vecu_n - \vecu_m\|_{QH^{1,p}(v,cB_j)}^p \\
& \leq \epsilon^p 2^p\sum_{j=1}^J \big[ \|u_n-u_m\|_{L^p(v,cB_j)}^p + \|\grad u_n-\grad u_m\|_{QL^p(cB_j)}^p] \\
& \leq \epsilon^p 2^pP \|\vecu_n - \vecu_m\|_{QH^{1,p}(v,)}^p \\
& \leq \epsilon^p 2^{2p}PM^p.
\end{align*}

Finally, to estimate $I_4$, since the sequence $\{u_n\}_{n=1}^\infty$ is weakly Cauchy in $L^p(v,\Omega)$, and since $\chi_{B_j} \in L^{p'}(v,\Omega)$, we have that there exists $L>1$ such that if $n>m\geq L$, then
\[ I_4 = \sum_{j=1}^J v(B_j)^{1-p}\bigg|\int_{\Omega} (u_n-u_m)\chi_{B_j}\,vdx\bigg|^p \leq \epsilon^p.  \]

If we combine the above estimates we see that for all such $n,\,m$ we have that
\[ \|u_n-u_m\|_{L^p(v,\Omega)} \leq 2^3PM\epsilon + 2\epsilon + 2MS(p,\sigma)\epsilon. \]
It follows that the sequence $\{u_n\}_{n=1}^\infty$ is strongly Cauchy.  

\medskip

Now suppose that~\eqref{eqn:global-sobolev-orlicz} holds with $A(t)=t^p\log(e+t)^\sigma$, $\sigma>0$. The only part of the above proof that changes is the estimate for $I_2$.  Fix $K\subset \Omega$ such that 
\[ \|\chi_{\Omega\setminus K}\|_{L^{\bar{A}}(v,\Omega)} = \bar{A}^{-1}\big(v(\Omega\setminus K)^{-1}\big)^{-1} 
\approx v(\Omega\setminus K)^{\frac{1}{p'}}\log(e+v(\Omega\setminus K)^{-1})^{-\frac{\sigma}{p}} \leq \epsilon.\]
Then by H\"older's inequality in the scale of Orlicz spaces~\eqref{eqn:orlicz-holder} and~\eqref{eqn:global-sobolev-orlicz},
\begin{multline*}
I_2 \leq 2\|u_n-u_m\|_{L^A(v,\Omega)}\|\chi_{\Omega\setminus K}\|_{L^{\bar{A}}(v,\Omega)} \\
\leq 2S(p,A)\|\grad u_n-\grad u_m\|_{QL^p(\Omega)}\|\chi_{\Omega\setminus K}\|_{L^{\bar{A}}(v,\Omega)}
\leq 4MS(p,A)\epsilon.
\end{multline*}
\end{proof}

\begin{lemma} \label{lemma:compact-no-gain}
 Given $1<p<\infty$ and $\Omega$ and $v$ as in Hypothesis~\ref{hyp:global-hyp}, and a bounded open set $\Theta$ such that $\bar{\Omega}\subset \Theta$,  suppose the pair $(\Theta,\rho)$ is a quasimetric space  that satisfies Hypothesis~\ref{hyp:poincare} and the local  Poincar\'e inequality~\eqref{eqn:local-poincare-QM}  holds on $\Theta$. Then the projection mapping $\cP_p:QH^{1,p}(v,\Omega)\rightarrow L^p(v,\Omega)$ is compact.
\end{lemma}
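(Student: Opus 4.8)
The plan is to adapt the proof of Lemma~\ref{lemma:compact-gain} almost verbatim, the one essential change being that here the compact containment $\overline{\Omega}\subset\Theta$ lets us cover all of $\overline{\Omega}$ by finitely many $\rho$-balls whose $c$-dilates still lie inside $\Theta$. This removes the ``tail'' term $I_2$ that, in the proof of Lemma~\ref{lemma:compact-gain}, was the only place the Sobolev inequality was invoked, which is exactly why no Sobolev inequality appears in the present statement.

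First I would reduce to the usual claim: given any sequence $\{\vecu_n\}=\{(u_n,\grad u_n)\}$ in $QH^{1,p}(v,\Omega)$ with $\|\vecu_n\|_{QH^{1,p}(v,\Omega)}\le M$, the sequence $\{u_n\}$ has a subsequence converging in $L^p(v,\Omega)$; this is equivalent to $\cP_p$ being compact. Since $1<p<\infty$ and $L^p(v,\Omega)$ is reflexive, after passing to a subsequence we may assume $\{u_n\}$ is weakly Cauchy in $L^p(v,\Omega)$ (see~\cite[p.~126]{MR1336382}), and the point is then to upgrade this to strong convergence.

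Next, fix $\epsilon>0$ and carry out the covering step. Since $\rho$ satisfies the geometric doubling condition of Hypothesis~\ref{hyp:poincare} on $\Theta$ and $\overline{\Omega}$ is a compact subset of the open set $\Theta$, applying~\cite[Lemma~2.3]{MR4069608} with $K=\overline{\Omega}$ yields $\delta_0>0$ such that for every $0<r<\delta_0$ there are finitely many $\rho$-balls $B_j=B_\rho(x_j,r)$, $x_j\in\overline{\Omega}$, $j=1,\dots,J$, with $\overline{\Omega}\subset\bigcup_j B_j$, $\bigcup_j cB_j\subset\Theta$, and $\sum_j\chi_{cB_j}\le P$, where $c\ge1$ is the constant in~\eqref{eqn:local-poincare-QM} and $P$ does not depend on $r$; shrinking $r$ we also arrange that the constant in~\eqref{eqn:local-poincare-QM} over each $B_j$ is at most $\epsilon$. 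Then, for $n>m$, I would estimate $\|u_n-u_m\|_{L^p(v,\Omega)}^p\le\sum_j\int_{B_j}|u_n-u_m|^p\,vdx$, split each integral into the oscillation of $u_n-u_m$ about its $v$-average on $B_j$ plus that average, bound the oscillation part by~\eqref{eqn:local-poincare-QM} together with the bounded overlap of the $cB_j$ to get a quantity $\lesssim\epsilon^p P M^p$ independent of $n,m$, and bound the average part by noting that $\chi_{B_j}\in L^{p'}(v,\Omega)$ for each of the finitely many $j$, so weak Cauchyness of $\{u_n\}$ makes it $\le\epsilon^p$ once $n,m$ are large. Summing shows $\{u_n\}$ is strongly Cauchy in $L^p(v,\Omega)$, as desired.

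The step requiring the most care is the covering together with making sense of~\eqref{eqn:local-poincare-QM} on balls $B_j$ that may protrude beyond $\Omega$: a priori the elements of $QH^{1,p}(v,\Omega)$ live only on $\Omega$, whereas the dilates $cB_j$ are only required to lie in $\Theta$. This is precisely why the hypothesis places the Poincar\'e inequality on a larger set $\Theta\supset\overline{\Omega}$ rather than on $\Omega$ itself; one must either extend the $u_n$ to a neighborhood of $\overline{\Omega}$ inside $\Theta$ or interpret the covering balls relative to $\Theta$ so that~\eqref{eqn:local-poincare-QM} applies directly, and then verify that $\grad u$ on the overlaps behaves exactly as in the proof of Lemma~\ref{lemma:compact-gain}. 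Everything else — the splitting, the use of bounded overlap, and the weak-Cauchy argument for the averages — is identical to that proof, with the Sobolev-controlled term simply absent.
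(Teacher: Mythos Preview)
Your proposal is correct and follows exactly the same approach as the paper's own proof: modify the argument of Lemma~\ref{lemma:compact-gain} by taking $K=\overline{\Omega}$ inside $\Theta$, so that the tail term $I_2$ (the only place the Sobolev inequality was used) disappears, and the remaining covering and Poincar\'e/weak-Cauchy estimates go through unchanged. You even flag the one subtle point---making sense of the Poincar\'e inequality on balls that may protrude beyond $\Omega$---which the paper itself leaves implicit.
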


A more general version of Lemma~\ref{lemma:compact-no-gain} was proved in~\cite[Theorem~1.7]{MR4069608}.  The proof of our result is gotten by modifying the proof of Lemma~\ref{lemma:compact-gain} and so for completeness we sketch the changes.

\begin{proof}
    The proof is essentially the same as the proof of Lemma~\ref{lemma:compact-gain}.  We replace $\Omega$ by $\Theta$, and $K$ by $\bar{\Omega}$.  To prove that the sequence $\{u_n\}_{L^p(v,\Omega)}$ is weakly Cauchy we do not need to estimate the integral on $\Theta\setminus \bar{\Omega}$, so we do not need to estimate $I_2$, where we used the Sobolev inequality.  The rest of the proof is the same.
\end{proof}

\medskip
\begin{proof}[Proof of Theorems~\ref{thm:main-theorem},~\ref{thm:log-gain}, and~\ref{thm:no-gain}]
The proofs of these results are essentially the same.  We will use the Fredholm alternative to show that a unique solution exists; the results we proved above will show that the hypotheses of this result are satisfied.

Given an element ${\bf u}=(u,\grad u) \in QH^{1,2}_0(\Omega)$, it is a degenerate weak solution of the Dirichlet problem \eqref{eqn:dirichlet} if and only if
\begin{equation}\label{scottsol1}
\mathcal{L}({\bf u},{\bf h}) = \mathcal{F}({\bf h})
\end{equation}
for all ${\bf h}=(h,\grad h)\in QH^{1,2}_0(v,\Omega)$, where $\mathcal L$ is the bilinear form~\eqref{bilinear} and the linear functional $\mathcal{F}$ in  $QH^{1,2}_0(v,\Omega)^*$ is given by
$$\mathcal{F}({\bf h})= \int_\Omega fh\,vdx + \int_\Omega {\bf g}\cdot \bT{ h}\,vdx.$$
As we proved above (Lemmas~\ref{lemma:bounded-power-gain} and~\ref{power-gain-almost-coercivity} for Theorem~\ref{thm:main-theorem}, Lemmas~\ref{orlicz-gain-boundedness} and~\ref{orlicz-gain-almost-coercivity} for Theorem~\ref{thm:log-gain}, and Lemmas~\ref{no-gain-boundedness} and~\ref{no-gain-almost-coercivity} for Theorem~\ref{thm:no-gain}) the bilinear form $\mathcal{L}$ is bounded and almost coercive on $QH^{1,2}_0(v,\Omega)$.  Therefore, we can define a bounded and coercive operator as follows.  Define  $\mathcal{T}:L^2(v,\Omega)\rightarrow L^2(v,\Omega)^*$ by 
$$\mathcal{T}u(w) = \int_\Omega uw\,vdx$$
for all $w\in L^2(v,\Omega)$.  Then, for $\beta>C_3$ (as in Lemmas~\ref{power-gain-almost-coercivity},~\ref{orlicz-gain-almost-coercivity}, and~\ref{no-gain-almost-coercivity}), the associated bilinear form given by
$$\mathcal{B}({\bf u},{\bf h}) = \mathcal{L}({\bf u},{\bf h})  + \beta\int_\Omega uh\,vdx$$
is both bounded and coercive.  Hence, by the Lax-Milgram Theorem~\cite[Theorem~5.8]{MR1814364}, for every $\mathcal{G}\in QH^{1,2}_0(v,\Omega)^*$, there is a unique ${\bf w}\in QH^{1,2}_0(v,\Omega)$ such that
\begin{align}\label{augmentedBilinear}
\mathcal{B}({\bf w},{\bf h}) = \mathcal{G}({\bf h})
\end{align}
for all ${\bf h}\in QH^{1,2}_0(v,\Omega)$.  Thus, we can define a linear, invertible ``solution map" $J:QH^{1,2}_0(v,\Omega)^*\rightarrow QH^{1,2}_0(v,\Omega)$ by 
$${\bf w} = J(\mathcal{G})$$
where ${\bf w}$ is the unique element of $QH^{1,2}_0(v,\Omega)$ satisfying \eqref{augmentedBilinear}.

If we now add $\beta \mathcal{T}\mathcal{P}_2{\bf u}$ to each side, we see that equation \eqref{scottsol1} is equivalent to the equation 
\begin{align}\label{scottsol2}
\mathcal{B}({\bf u},{\bf h}) = \mathcal{G}({\bf h})
\end{align}
for all ${\bf h}\in QH^{1,2}_0(v,\Omega)$ where the linear functional $\mathcal{G}\in QH^{1,2}_0(v,\Omega)^*$ is defined by 
$$\mathcal{G} = \mathcal{F} + \beta\mathcal{T}\mathcal{P}_2({\bf u}).$$
Furthermore, ${\bf u}$ satisfies \eqref{scottsol2} if and only if $J\mathcal{B}({\bf u},\cdot)={\bf u}$.  Thus, $\vecu$ is a weak solution of our Dirichlet problem if and only if ${\bf u}$ satisfies the equation
\begin{align}\label{scottsol3}
(I-K){\bf u} = {\bf s}
\end{align}
where $K:QH^{1,2}_0(v,\Omega)\rightarrow QH^{1,2}_0(v,\Omega)$ is defined by $K=\beta J\mathcal{T}\mathcal{P}_2$ and ${\bf s} = J\mathcal{F}\in QH^{1,2}_0(\Omega)$.  Since each of the mappings $J,\mathcal{G},\mathcal{T}$ is continuous, $K$ is a compact operator if the projection map $\mathcal{P}_2$ is compact.  But this follows from Lemma~\ref{lemma:compact-gain} (for Theorems~\ref{thm:main-theorem} and~\ref{thm:log-gain}) and Lemma~\ref{lemma:compact-no-gain} (for Theorem~\ref{thm:no-gain}).   Therefore, by the Fredholm alternative \cite[Theorem~5.11]{MR1814364}, for each ${\bf s}\in QH^{1,2}_0(\Omega)$, equation \eqref{scottsol3} has exactly one solution ${\bf u} \in QH^{1,2}_0(v,\Omega)$ precisely when the kernel of $I-K$ is trivial.  Since ${\bf w}\in \textrm{Ker}(I-K)$ if and only if ${\bf w}=K{\bf w}$, if we apply $J^{-1}$ to each side we get that for every ${\bf h}\in QH^{1,2}_0(v,\Omega)$,
$$\mathcal{B}({\bf w},{\bf h}) = \beta\int_\Omega wh\,vdx.$$
Equivalently,  ${\bf w}\in \textrm{Ker}(I-K)$ if and only if
$$\mathcal{L}({\bf w},{\bf h}) = 0$$
for ${\bf h}\in QH^{1,2}_0(v,\Omega)$.
That is, ${\bf w}$ is a weak solution of the zero problem~\eqref{homogeneous problem}.
By Theorem~\ref{thm:hom-problem} (which holds given the hypotheses of Theorems~\ref{thm:main-theorem},~\ref{thm:log-gain}, or~\ref{thm:no-gain}),  $(0,0)\in QH^{1,2}_0(v,\Omega)$ is the unique degenerate weak solution of the zero problem, and so $\textrm{Ker}(I-K)$ is  trivial.  Therefore, we have shown that for every ${\bf s}\in QH^{1,2}_0(v,\Omega)$, there is a unique solution ${\bf u}\in QH^{1,2}_0(v,\Omega)$ of \eqref{scottsol3} and so there is a unique degenerate weak solution to the Dirichlet problem \eqref{eqn:dirichlet}.
\end{proof}

\medskip

\begin{proof}[Proof of Theorem~\ref{thm:no-gain-no-first-order}]
    The proof of this result is much simpler than the others.  Just assuming $v$ is nonnegative and measurable (i.e., a weight), we have that $QH_0^{1,2}(v,\Omega)$ is still a Hilbert space.  
    Since $\bG=\bH=0$, then it follows from the proofs of Lemmas~\ref{no-gain-boundedness} and~\ref{no-gain-almost-coercivity} that the bilinear form $\mathcal L$ is bounded and coercive.  (We use the Sobolev inequality without gain and the fact that $F$ is nonnegative, instead of the compatibility condition~\eqref{eqn:compatible}, to prove coercivity.)  Therefore, with the notation used above, by the Lax-Migram theorem there exists a unique $\vecu$ such that
    \[ {\mathcal L}(\vecu,\vech) = {\mathcal F}(\vech); \]
    equivalently, $\vecu$ is the unique degenerate weak solution to the Dirichlet problem \eqref{eqn:dirichlet}.
\end{proof}

\section{Proof of Corollary~\ref{cor:boundary-data}}
\label{section:cor-proofs}

Fix $\varphi\in QH^{1,2}(v,\Omega)$.  Recall that an element $u\in QH^{1,2}(v,\Omega)$ is a degenerate weak solution of the Dirichlet problem with boundary data $\varphi$,
\begin{equation}\label{inhomogeneousxx}
\begin{cases}
Lu = f+T'{\bf g}, & x\in \Omega,\\
u =\varphi, & x\in \partial\Omega,
\end{cases}
\end{equation}
if $w=u-\varphi \in QH^{1,2}(v,\Omega)$ is a solution of the Dirichlet problem 
\begin{equation}\label{inhomogeneous2xx}
\begin{cases}
Lw = f+{\bf T}'{\bf g} - L\varphi, & x \in \Omega,\\
w=0, & x\in \partial\Omega.
\end{cases}
\end{equation}
That is, for each $y \in QH^{1,2}_0(v,\Omega)$,
\begin{align*}
& \int_\Omega \nabla w \cdot Q\nabla y\,dx + \int_\Omega \bH\cdot\bR w y\,vdx 
+ \int_\Omega w\bG \cdot \bS y\,vdx + \int_\Omega Fwy\,vdx\\
&\qquad = \int_\Omega fy \,vdx + \int_\Omega {\bf g}\cdot {\bf T}y \,vdx \\
&\qquad \qquad - \int_\Omega \nabla \varphi\cdot Q\nabla y~dx -\int_\Omega \varphi \bH\cdot \bR y \,vdx 
- \int_\Omega \varphi \bG\cdot \bS y\,vdx - \int_\Omega Fy \varphi \,vdx\\
& \qquad =\int_\Omega \left[f - \varphi \bH\cdot \bR -F\varphi \right]y \,vdx \\
& \qquad \qquad +\int_\Omega \left[ {\bf g}\cdot {\bf T} - \frac{1}{\sqrt{v}}\sqrt{Q}\nabla \varphi \cdot \frac{1}{\sqrt{v}}\sqrt{Q}\nabla -\varphi\bG \cdot \bS \right]y\,vdx\\
&= \int_\Omega f_1y\,vdx + \int_\Omega {\bf g}_1{\cdot \bf T}_{1}y \,vdx, 
\end{align*}

where 
$f_1 = f - {\bf HR}\varphi - F\varphi$, 
$${\bf g}_1 = (g_1,..,g_N,-V_1\varphi,...,-V_n\varphi,-\varphi G_1,...,-\varphi G_N), $$
and
$${\bf T_1} = (T_1,...,T_N,V_1,...,V_n,S_1,...,S_N),$$
with $V_i = \frac{1}{\sqrt{v}}{\bf q_i}\cdot\nabla$ and ${\bf q_i}$ the $i$-th row vector of $\sqrt{Q}$.

It follows from Hypothesis~\ref{hyp:global-hyp} that  $f_1\in L^2(v,\Omega)$, ${\bf g_1}\in L^2(v,\Omega,\R^{2N+n})$, and  ${\bf T_1}$ a $(2N+n)$-tuple of degenerate subunit vector fields that satisfy \eqref{eqn:norm-subunit}.  Thus, if the hypotheses of one of Theorems \ref{thm:main-theorem}, \ref{thm:log-gain}, \ref{thm:no-gain}, or \ref{thm:no-gain-no-first-order}  hold, then this implies that there exists a unique degenerate weak solution $w\in QH^{1,2}_0(v,\Omega)$ of the Dirichlet problem \eqref{inhomogeneous2xx}.  Hence, $u=w+\varphi$ is a degenerate weak solution of \eqref{inhomogeneousxx}.  This completes the proof.

%Arguments needed:

%\begin{enumerate}
%\item Easy to show that $f_1$ is $L^2$ using conditions of our main theorem, Sobolev, and $\varphi\in QH^{1,2}(v,\Omega)$.
%\item Again, it is easy to show that ${\bf g_1}$ is in $[L^2]^n$ using the same ideas.
%\item Need to show that each $V_i$ is subunit with respect to $Q$.  
%\end{enumerate}
\bibliographystyle{plain}
\bibliography{tubitak-bibliography}

\end{document}